\documentclass[a4paper]{amsart}
\usepackage{amsfonts,amsmath,amsthm,amssymb,color}
\usepackage[all]{xy}

\usepackage[colorlinks=true, linkcolor=blue, citecolor=blue, urlcolor=blue]{hyperref}

\topmargin 0.2cm
\oddsidemargin 0.5cm
\evensidemargin 0.5cm
\textwidth 15cm
\textheight 21cm

\numberwithin{equation}{section}

            \begin{document}

\newtheorem{theorem}{Theorem}[section]
\newtheorem{lemma}[theorem]{Lemma}
\newtheorem{proposition}[theorem]{Proposition}
\newtheorem{corollary}[theorem]{Corollary}

\theoremstyle{definition}
\newtheorem{definition}[theorem]{Definition}
\newtheorem{example}[theorem]{Example}
\newtheorem{notation}[theorem]{Notation}

\theoremstyle{remark}
\newtheorem{remark}[theorem]{Remark}
\newtheorem*{ack}{Acknowledgments}

\newcommand{\Set}{\mathbf{Set}}
\newcommand{\Art}{\mathbf{Art}}
\newcommand{\solose}{\Rightarrow}

\renewcommand{\bar}{\overline}
\newcommand{\de}{\partial}
\newcommand{\eps}{\varepsilon}
\newcommand{\debar}{{\overline{\partial}}}
\newcommand{\per}{\!\cdot\!}

\newcommand{\Oh}{\mathcal{O}}
\newcommand{\sA}{\mathcal{A}}
\newcommand{\sB}{\mathcal{B}}
\newcommand{\sC}{\mathcal{C}}
\newcommand{\sD}{\mathcal{D}}
\newcommand{\sE}{\mathcal{E}}
\newcommand{\sF}{\mathcal{F}}
\newcommand{\sG}{\mathcal{G}}
\newcommand{\sH}{\mathcal{H}}
\newcommand{\sI}{\mathcal{I}}
\newcommand{\sJ}{\mathcal{J}}
\newcommand{\sK}{\mathcal{K}}
\newcommand{\sL}{\mathcal{L}}
\newcommand{\sM}{\mathcal{M}}
\newcommand{\sP}{\mathcal{P}}
\newcommand{\sU}{\mathcal{U}}
\newcommand{\sV}{\mathcal{V}}
\newcommand{\sX}{\mathcal{X}}
\newcommand{\sY}{\mathcal{Y}}
\newcommand{\sN}{\mathcal{N}}
\newcommand{\sZ}{\mathcal{Z}}

\newcommand{\CE}{\operatorname{CE}}
\newcommand{\Aut}{\operatorname{Aut}}
\newcommand{\Mor}{\operatorname{Mor}}
\newcommand{\Def}{\operatorname{Def}}
\newcommand{\Hom}{\operatorname{Hom}}
\newcommand{\HOM}{\operatorname{\mathcal H}\!\!om}
\newcommand{\DER}{\operatorname{\mathcal D}\!er}
\newcommand{\Spec}{\operatorname{Spec}}
\newcommand{\Der}{\operatorname{Der}}
\newcommand{\End}{{\operatorname{End}}}
\newcommand{\END}{\operatorname{\mathcal E}\!\!nd}
\newcommand{\Image}{\operatorname{Im}}
\newcommand{\coker}{\operatorname{coker}}
\newcommand{\tot}{\operatorname{tot}}
\newcommand{\ten}{\otimes}
\newcommand{\mA}{\mathfrak{m}_{A}}
\newcommand{\dec}{\mbox{d\'ec}}
\newcommand{\op}{\operatorname}

\renewcommand{\Hat}[1]{\widehat{#1}}
\newcommand{\dual}{^{\vee}}
\newcommand{\desude}[2]{\dfrac{\de #1}{\de #2}}
\newcommand{\A}{\mathbb{A}}
\newcommand{\N}{\mathbb{N}}
\newcommand{\R}{\mathbb{R}}
\newcommand{\Z}{\mathbb{Z}}
\renewcommand{\H}{\mathbb{H}}
\renewcommand{\L}{\mathbb{L}}
\newcommand{\proj}{\mathbb{P}}
\newcommand{\K}{\mathbb{K}\,}
\newcommand\C{\mathbb{C}}
\newcommand\T{\mathbb{T}}
\newcommand\Del{\operatorname{Del}}
\newcommand\Tot{\operatorname{Tot}}
\newcommand\Grpd{\mbox{\bf Grpd}}
\newcommand\vr{``}
\newcommand{\rh}{\rightarrow}
\newcommand{\contr}{{\mspace{1mu}\lrcorner\mspace{1.5mu}}}

\newcommand{\bi}{\boldsymbol{i}}
\newcommand{\bl}{\boldsymbol{l}}
\newcommand{\bk}{\boldsymbol{k}}

\newcommand{\tr}{\triangleright}
\newcommand{\tl}{\triangleleft}
\newcommand{\MC}{\operatorname{MC}}
\newcommand{\Coder}{\operatorname{Coder}}
\newcommand{\TW}{\operatorname{TW}}
\newcommand{\id}{\operatorname{id}}
\newcommand{\ad}{\operatorname{ad}}
\newcommand{\cone}{\operatorname{C}}
\newcommand{\cocone}{C}
\newcommand{\cylinder}{\operatorname{Cyl}}
\newcommand{\Yuk}{\operatorname{Yu}}
\newcommand{\Grass}{\operatorname{Grass}}
\newcommand{\rank}{\operatorname{rank}}

\title{Algebraic models of local period maps and Yukawa algebras}
\author{Ruggero Bandiera}
\address{\newline
Universit\`a degli studi di Roma La Sapienza,\hfill\newline
Dipartimento di Matematica \lq\lq Guido
Castelnuovo\rq\rq,\hfill\newline
P.le Aldo Moro 5,
I-00185 Roma, Italy.}
\email{bandiera@mat.uniroma1.it}

\author{Marco Manetti}
\email{manetti@mat.uniroma1.it}
\urladdr{www.mat.uniroma1.it/people/manetti/}

\date{\today}

\begin{abstract}
We describe some $L_{\infty}$ model for the local period map of a compact K\"{a}hler manifold. 
Applications include the study of deformations with associated variation of Hodge structure 
constrained by certain closed strata of the Grassmannian of the de Rham cohomology. 
As a byproduct we obtain an interpretation in the framework of deformation theory of the Yukawa coupling.
\end{abstract}

\maketitle

\section*{Introduction}
For a better understanding of the content of this paper
it is preferable to begin with  a heuristic and somewhat imprecise description.
Let $X$ be a compact K\"{a}hler manifold of dimension $n$;
the Hodge filtration on the de Rham complex of $X$ determines a filtration of graded vector spaces
\[ 0\subset F^nH^*(X,\C)\subset F^{n-1}H^*(X,\C)\subset\cdots \subset
F^{0}H^*(X,\C)=H^*(X,\C)=\oplus_i H^i(X,\C)\]
and therefore a sequence of elements in the total Grassmannian
$\Grass(H^*(X,\C))$, i.e., in the (reducible) variety of all graded vector subspaces of the de Rham cohomology.
We recall that $\Grass(H^*(X,\C))$ is smooth and the Zariski tangent space at a point $A^*$ is
\[T_{A^*}\Grass(H^*(X,\C))=\prod_{i=0}^n\Hom\left(A^i,\frac{H^i(X,\C)}{A^i}\right)\;.\]

Let $\sX\to (U,0)$ be a deformation of $X$, with $U$ sufficiently small and contractible; then topologically $\sX$ is the product $U\times X$, in particular for every $u\in U$ there exists a natural
isomorphism $H^*(X_u,\C)=H^*(X_0,\C)$, induced by the Gauss-Manin connection, and it is well known that the  local
$i$th period map
\[ \sP^i\colon U\to \Grass(H^*(X,\C)),\qquad
\sP^i(u)=
F^{i}H^*(X_u,\C),\]
is holomorphic, cf. \cite[Thm. 10.9]{Voisin}. Since Grassmannians admits natural stratifications, e.g. via  Schubert varieties, it is natural to consider deformations whose period map is constrained into a fixed closed stratum. 
For instance, given
$\sX\to (U,0)$ as above one can consider the  determinantal loci

\begin{equation}\label{equ.definizioneluoghi}
\Sigma_{i,j,p,q}(U)=\left\{u\in U\;\middle|\; \rank\left( F^{i}H^p(X_u,\C)\to \frac{H^p(X_0,\C)}{F^{j}H^p(X_0,\C)}\right)\le q\right\}\,.
\end{equation}

Using Griffiths' description of the differential of the period map, it is straightforward to
calculate the Zariski tangent space of $\Sigma_{i,j,p,q}(U)$ in terms of the Kodaira-Spencer map
$T_{0}U\to H^1(X,\Theta_X)$; transversality implies in particular
that
$T_0\Sigma_{i,j,p,q}(U)=T_0U$ for every $i>j$.  On the other hand, if
$j<i\le p<n+j$  it is natural to expect
$\Sigma_{i,j,p,q}(U)$ to be a closed analytic proper subset  of $U$, provided $q$ sufficiently small and
$\sX\to (U,0)$  sufficiently general.
Therefore the determinantal loci $\Sigma_{i,j,p,q}(U)$ are generally singular even for very general families $\sX\to (U,0)$, 
and a  study of them
requires a deep understanding of higher derivatives of the local period maps.

In this paper, we restrict our attention to the subset $Y_U=\Sigma_{n,1,n,0}(U)$, that can be written more explicitly as 
\[Y_U=
\{u\in U\mid  F^{n}H^n(X_u,\C)\subseteq F^{1}H^n(X_0,\C)\}=\{u\in U\mid  F^{n}H^*(X_u,\C)\subseteq F^{1}H^*(X_0,\C)\},\] 
although we think
that ideas and methods developed here can be applied in a more or less straightforward way to more general situations.
This choice is also motivated to a better understanding of the Yukawa coupling, as defined in the algebraic geometry framework 
(\cite[p. 132]{CGGH},\cite[p. 36]{Green}), from the point of view of deformation theory and infinitesimal variations of Hodge structures. 

Since every proper flat family of complex manifolds is obtained locally as a pull back of the Kuranishi family $\sX\to B$ of the central fibre $X_0$,  it is sufficient to study the locus $Y_B\subset B$. 
In doing this there are several nontrivial  problems to solve. The first is that the classical description 
of the period map is defined only for families over reduced basis, while there are many examples of projective manifolds with non-reduced Kuranishi base space \cite{vakil}. In fact, for families over certain non reduced singularities the Gauss-Manin connection does not integrate (this explains, for instance, the assumption on the map $d$ in  \cite[Prop. 4.2]{bloch}).     

The solution to this problem that we adopt here is based on the Fiorenza-Manetti 
description of the period map given in \cite{FMperiods,FMAJ}. Very briefly (details in Sections~\ref{sec.CHF} and \ref{sec:FM}), if $A_{X_0}^{*,*}$ denotes the de Rham complex of $X_0$ and 
$A_X^{0,p}(\Theta_{X_0})$ the space of $(0,p)$-forms with values in the holomorphic tangent bundle, then 
a deformation of $X_0$ over $B$ is induced by a holomorphic family $\xi\colon B\to A_X^{0,p}(\Theta_{X_0})$ of integrable almost complex structures, defined up to gauge equivalence \cite{GoMil2,ManRendiconti}. 
The contraction operator
\[ \bi_{\xi}\colon  A_{X_0}^{*,*}\to A_{X_0}^{*,*},\qquad \bi_{\xi}(\omega)=\xi\contr\omega,\]
has bidegre $(-1,1)$ and in particular is nilpotent. The integrability condition $d\xi+\frac{1}{2}[\xi,\xi]=0$ implies that, for every $p$, the image 
$e^{\bi_{\xi}}(A_{X_0}^{\ge p,*}\otimes \Oh_B)$ is a subcomplex, their cohomology classes inside $H^*(X_0,\C)\otimes \Oh_B$
are gauge invariant and, when the bases $B$ is reduced, we recover the usual variation of Hodge structure
$H^*(e^{\bi_{\xi}}(A_{X_0}^{\ge p,*}\otimes \Oh_B))=F^pH^*(X_{\xi},\C)$.  
Using this description it is not difficult to write down a set of equations for
$Y_B$ and prove that, at least when $B$ is smooth, its tangent cone is defined 
by homogeneous polynomials of degree $\ge n$, and the ones of degree $n$ are precisely those in   
the  Yukawa linear system.

It is worth to mention here that the proof of the gauge invariance of the previous construction is a standard consequence of the fact that the period map of the Kuranishi family is induced 
by an $L_{\infty}$ morphism from the Kodaira-Spencer DG-Lie algebra of $X_0$ to an $L_{\infty}$ algebra 
controlling the local structure of the total Grassmannian, giving therefore a constructive and explicit proof that the period map is a morphism of deformation theories. 
In fact, according to general principles pioneered by Nijenhuis (see e.g. \cite{Nij}) and further exposed by
Deligne and Drinfeld in their famous letters \cite{DtM,DtS} (and recently made rigorous by 
several people, especially Lurie~\cite{Lur} and Pridham~\cite{Pri}), 
over a field of characteristic 0 every deformation problem is controlled by a DG-Lie algebra defined up to quasi-isomorphism and 
every morphism of deformation theories is
induced by a morphism in the homotopy category of DG-Lie algebras, which admits a
representative as an $L_{\infty}$ morphism, unique up to homotopy equivalence.

However the situation is not yet completely satisfactory since the above approach does not
give any deformation theoretic interpretation of $Y_B$;
in other words it is not clear under what extent $Y_B$ is the local moduli space
for some deformation problem. The natural starting point is that $Y_B$ fits into a
pull-back diagram
\begin{equation}\label{equ.pullbackdiagram} 
\xymatrix{Y_B\ar[d]\ar[r]&\Grass(F^1H^*(X,\C))\ar[d]\\
B\ar[r]^-{\sP^n}&\Grass(H^*(X,\C))}\end{equation}
which we want to replicate in the category of
$L_{\infty}$ algebras. This is not easy at it seems at a first sight, since the $L_{\infty}$ morphism representing $\sP^n$ is defined up to homotopy and fiber products of $L_{\infty}$-morphisms are  not defined in general (in the category of $L_{\infty}$-algebras). The most natural  solution to both problems is to consider homotopy fiber products. This is possible since DG-Lie algebras 
and $L_{\infty}$-algebras are pointed categories of fibrant objects%
\footnote{As pointed out by one referee, they are precisely the fibrant objects in two model categories, namely the category of DG-Lie algebras equipped with the Quillen model structure and the category of 
cocommutative conilpotent coalgebras equipped with the model structure defined by Hinich \cite{Hin}, respectively.} \cite{kenbrown}, although the usual procedure 
for the construction of homotopy fiber products gives  models which are in general very far from being minimal 
and therefore with Maurer-Cartan equation of difficult geometric interpretation.

The main and probably most difficult  part of this paper is devoted to the construction 
of some small, and in some  cases minimal, models for the homotopy pull-back relative to the 
$L_{\infty}$ version of the above diagram \eqref{equ.pullbackdiagram}.

It is well known that deformations of $X$ are controlled by the Kodaira-Spencer DG-Lie algebra $KS_X$, 
defined as the Dolbeault resolution of the holomorphic tangent sheaf, equipped with the natural bracket and the opposite of the Dolbeault's differential. Let's fix a K\"{a}hler metric on $X$ and denote by $H^{p,q}_X$ the space of harmonic $(p,q)$-forms. If 
\[ U=\oplus_{q}H^{n,q},\qquad V=\oplus_{p<n}\oplus_q H^{p,q},\qquad  W=\oplus_{0<p<n}\oplus_q H^{p,q}\;,\]
then the graded vector spaces $\Hom^*(U,V)[-1]$ and $\Hom^*(U,W)[-1]$, 
considered as  DG-Lie algebras with trivial bracket and trivial differential, control the Grassmann functors of embedded deformations of $F^nH^*(X,\C)$ inside $H^*(X,\C)$ and $F^1H^*(X,\C)$ respectively. 
The first main, and computationally the hardest, result of this paper is the complete 
description of an $L_{\infty}$ morphism 
\[\sP^n\colon KS_X\to \Hom^*(U,V)[-1]\] 
representing the $n$th period map (Theorem~\ref{th:modelperiodmap}); unfortunately 
such a description involves  Green's operators in its Taylor coefficients of order $\ge 2$.  
At this point we can consider the homotopy pull-back diagram 
\[ \xymatrix{\Yuk_X\ar[d]^f\ar[r]&\Hom^*(U,W)[-1]\ar[d]\\
KS_X\ar[r]^-{\sP^n}&\Hom^*(U,V)[-1]}\]
keeping in mind that $\Yuk_X$ is defined up to isomorphism 
in the homotopy category and then its $L_{\infty}$ model is 
determined  only up to quasi-isomorphism.  For simplicity of notation we shall refer to 
$\Yuk_X$ as the Yukawa algebra of $X$.

Our second main result is the description of an $L_{\infty}$  model for the Yukawa algebra in which the 
underlying complex is $KS_X\times \Hom^*(U,V/W)[-2]$ and $f$ is the projection on the first factor
(Theorem~\ref{th:L_ooalgyukawa}): this is done 
by using the theory of derived brackets and $L_{\infty}$ extensions. We shall see that at the level of deformation functors the induced map $f\colon \Def_{\Yuk_X}\to \Def_{KS_X}$ is generally not injective, since its fibre
is pro-represented by the vector space $\Hom(H^{n,0}_X,H^{0,n-1})\oplus \Hom(H^{n,1}_X,H^{0,n})$, while the image 
$f(\Def_{\Yuk_X})\subset \Def_{KS_X}$ is pro-represented by the previously defined locus $Y_B$.
The philosophical interpretation of this fact is that, as frequently happen in deformation theory, the ``geometric'' 
definition
\begin{equation}
Y_U=\left\{u\in U\;\middle|\; \rank\left(F^{n}H^n(X_u,\C)\to \frac{H^n(X_0,\C)}{F^{1}H^n(X_0,\C)}\right)=0\right\}\,.
\end{equation}
requires the additional framing of a homotopy between 0 and the map $F^{n}H^*(X_u,\C)\to \frac{H^*(X_0,\C)}
{F^{1}H^*(X_0,\C)}$ in order to be considered a genuine, i.e., not artificially truncated,   
deformation problem.

The last result of this paper is the proof that if $X$ is a K3 surface, then $\Yuk_X$ is formal; this implies in particular that, denoting by  $\sX\to (B,0)$ its universal deformation, there exists an isomorphism of germs 
$i\colon (B,0)\to (H^1(X,\Theta_X),0)$ such that $i(Y_B)$ is the quadratic cone defined by the Yukawa coupling.

\bigskip
\section{Review of holomorphic Cartan homotopy formulas and K\"{a}hler identities}
\label{sec.CHF}

Unless otherwise specified every vector space is considered over the field of complex numbers. 
Given a complex manifold $X$ let's denote by $(A^{*,*}_X,d=\de+\debar)$
its de Rham complex; later on we shall also consider  its subcomplexes,
\[ A^{\ge p,*}_X=\bigoplus_{i\ge p}A_X^{i,*},\qquad 
A^{>p,*}_X=\bigoplus_{i>p}A_X^{i,*},\]
together the quotient complexes
\[ A^{<p,*}_X=A^{\le p-1,*}_X=\frac{A^{*,*}_X}{A^{\ge p,*}_X},\qquad
 A^{p<*<q,*}_X=\frac{A^{>p,*}_X}{A^{\ge q,*}_X},\qquad A^{i,*}_X=\frac{A^{\ge i,*}_X}{A^{>i,*}_X}\,.\]
Denote by $(A^{0,*}_X(\Theta_X),\debar)$ the Dolbeault complex of the holomorphic
tangent sheaf. 
For notational simplicity, unless otherwise specified,  we shall denote by the same symbol $[-,-]$ both the usual bracket on $A^{0,*}_X(\Theta_X)$ and the graded commutator in
the space $\Hom^*(A^{*,*}_X,A^{*,*}_X)$.
The contraction map $\Theta_X\otimes \Omega_X^p\xrightarrow{\contr}\Omega_X^{p-1}$ extends in the obvious
way to map of degree $-1$:
\[ \bi\colon  A^{0,*}_X(\Theta_X)\to \Hom^*(A^{*,*}_X,A^{*,*}_X),\qquad 
\xi\mapsto \bi_{\xi},\quad \bi_{\xi}(\omega)=\xi\contr \omega\,,\]
which satisfies the ``holomorphic Cartan homotopy formulas'', see e.g. \cite{FMperiods}:
\begin{equation}\label{equ.holomorphiccartanformulas}
 [\bi_{\eta},\bi_{\mu}]=0,\quad
\bi_{\debar\eta}=[\debar,\bi_{\eta}],\quad
\bi_{[\eta,\mu]}=[\bi_{\eta},[\de,\bi_{\mu}]]\,.\end{equation}
The morphism of degree 0
\[ \bl\colon  A^{0,*}_X(\Theta_X)\to \Hom^*(A^{*,*}_X,A^{*,*}_X),\qquad \xi\mapsto\bl_{\xi}=[\de,\bi_{\xi}],
\]
satisfies the equalities
$\bl_{\debar\eta}=-[\debar,\bl_{\eta}]$ and 
$\bl_{[\eta,\mu]}=[\bl_{\eta},\bl_{\mu}]$. A straightforward computation in local coordinates shows that 
every $\xi\in A^{0,p}_X(\Theta_X)$
can be recovered from the linear map $\bl_{\xi}\colon A^{0,0}_X\to A^{0,p}_X$; in particular  both $\bi$ and $\bl$ are injective maps. We shall refer to $\bl_\xi$ as the holomorphic Lie derivative of $\xi$.

For every $\xi\in A^{0,1}_X(\Theta_X)$, the operator $\bi_{\xi}$ is nilpotent of degree $0$ and then it makes sense to consider its exponential
\[ e^{\bi_{\xi}}\colon A^{*,*}_X\to A^{*,*}_X\;.\]
Since $[\bi_{\eta},\bi_{\mu}]=0$ we always have $e^{\bi_{\xi}}e^{\bi_{\eta}}=e^{\bi_{\xi}+\bi_{\eta}}=e^{\bi_{\xi+\eta}}$.

\begin{lemma}\label{lem.integrationequation}
For an element $\xi \in A^{0,1}_X(\Theta_X)$ the following are equivalent:

\begin{enumerate}

\item\label{item1.integrationequation} $\debar\xi=\dfrac{1}{2}[\xi,\xi]$, i.e., $\xi$ is integrable (in the sense of Newlander-Nirenberg);

\item\label{item2.integrationequation} $e^{-\bi_{\xi}}de^{\bi_{\xi}}=d+\bl_{\xi}$;

\item\label{item3.integrationequation} $(d+\bl_{\xi})^2=0$;

\item\label{item4.integrationequation} $(\debar+\bl_{\xi})^2=0$.

\end{enumerate}
\end{lemma}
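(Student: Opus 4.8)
The plan is to prove the cycle of implications $(\ref{item1.integrationequation})\Rightarrow(\ref{item2.integrationequation})\Rightarrow(\ref{item3.integrationequation})\Rightarrow(\ref{item4.integrationequation})\Rightarrow(\ref{item1.integrationequation})$, relying throughout on the holomorphic Cartan homotopy formulas \eqref{equ.holomorphiccartanformulas} and the identities $\bl_{\debar\eta}=-[\debar,\bl_{\eta}]$, $\bl_{[\eta,\mu]}=[\bl_{\eta},\bl_{\mu}]$ recalled just above the statement.

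For $(\ref{item1.integrationequation})\Rightarrow(\ref{item2.integrationequation})$ I would expand $e^{-\bi_{\xi}}de^{\bi_{\xi}}$ using the standard conjugation formula $e^{-\bi_{\xi}}de^{\bi_{\xi}}=d+[d,\bi_{\xi}]+\tfrac12[[d,\bi_{\xi}],\bi_{\xi}]+\cdots$, i.e. $\sum_{k\ge0}\tfrac{1}{k!}(\ad_{-\bi_{\xi}})^k(d)$. Writing $d=\de+\debar$, the first commutator is $[d,\bi_{\xi}]=[\de,\bi_{\xi}]+[\debar,\bi_{\xi}]=\bl_{\xi}+\bi_{\debar\xi}$, using the second Cartan formula in \eqref{equ.holomorphiccartanformulas}. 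For the next term, $[[d,\bi_{\xi}],\bi_{\xi}]=[\bl_{\xi},\bi_{\xi}]+[\bi_{\debar\xi},\bi_{\xi}]$; the second summand vanishes since $[\bi_\eta,\bi_\mu]=0$, and $[\bl_{\xi},\bi_{\xi}]=[[\de,\bi_\xi],\bi_\xi]=-[\bi_\xi,[\de,\bi_\xi]]=-\bi_{[\xi,\xi]}$ by the third Cartan formula (being careful with the Koszul signs for the degree $+1$ element $\bi_\xi$, so that the graded Jacobi identity gives $[[\de,\bi_\xi],\bi_\xi]=[\bi_\xi,[\bi_\xi,\de]]$ up to sign). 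All higher terms $(\ad_{-\bi_\xi})^k(d)$ for $k\ge3$ vanish because $\bi_\xi$ commutes with both $\bi_{\debar\xi}$ and $\bi_{[\xi,\xi]}$. Collecting, $e^{-\bi_\xi}de^{\bi_\xi}=d+\bl_\xi+\bi_{\debar\xi}-\tfrac12\bi_{[\xi,\xi]}=d+\bl_\xi+\bi_{\debar\xi-\frac12[\xi,\xi]}$, which equals $d+\bl_\xi$ precisely when $\xi$ is integrable; since $\bi$ is injective, this is an equivalence, which incidentally also gives $(\ref{item2.integrationequation})\Rightarrow(\ref{item1.integrationequation})$ for free. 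I expect this computation — keeping track of signs in the graded Jacobi identity — to be the main technical obstacle, though it is routine once organized.

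The implication $(\ref{item2.integrationequation})\Rightarrow(\ref{item3.integrationequation})$ is immediate: if $d+\bl_\xi=e^{-\bi_\xi}de^{\bi_\xi}$, then $(d+\bl_\xi)^2=e^{-\bi_\xi}d^2e^{\bi_\xi}=0$ since $d^2=0$. For $(\ref{item3.integrationequation})\Rightarrow(\ref{item4.integrationequation})$, I would expand $(d+\bl_\xi)^2=d^2+[d,\bl_\xi]+\bl_\xi^2=[\de+\debar,\bl_\xi]+\bl_\xi^2$. Now $[\de,\bl_\xi]=[\de,[\de,\bi_\xi]]=\tfrac12[[\de,\de],\bi_\xi]=0$, so $[d,\bl_\xi]=[\debar,\bl_\xi]=-\bl_{\debar\xi}$. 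Also $\bl_\xi^2=\tfrac12[\bl_\xi,\bl_\xi]=\tfrac12\bl_{[\xi,\xi]}$. Hence $(d+\bl_\xi)^2=\bl_{\frac12[\xi,\xi]-\debar\xi}=(\debar+\bl_\xi)^2$, since the same computation with $d$ replaced by $\debar$ gives exactly $(\debar+\bl_\xi)^2=[\debar,\bl_\xi]+\bl_\xi^2=\bl_{\frac12[\xi,\xi]-\debar\xi}$. In fact this shows $(\ref{item3.integrationequation})$ and $(\ref{item4.integrationequation})$ are each equivalent to $\bl_{\debar\xi-\frac12[\xi,\xi]}=0$, and since $\bl$ is injective this is equivalent to $(\ref{item1.integrationequation})$, closing the cycle with $(\ref{item4.integrationequation})\Rightarrow(\ref{item1.integrationequation})$. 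Thus the whole lemma reduces to two short bracket computations plus the injectivity of $\bi$ and $\bl$, which is already granted in the text.
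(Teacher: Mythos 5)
Your proposal is correct and follows essentially the same route as the paper: the key computations are the unconditional expansions $e^{-\bi_\xi}de^{\bi_\xi}=d+\bl_\xi+\bi_{\debar\xi-\frac12[\xi,\xi]}$ (via the Cartan formulas and the vanishing of all $\ad_{-\bi_\xi}$-iterates beyond the second) and $(\debar+\bl_\xi)^2=\bl_{-\debar\xi+\frac12[\xi,\xi]}$, combined with the injectivity of $\bi$ and $\bl$. The only cosmetic difference is that you organize the implications as a single cycle and observe that $(\ref{item2.integrationequation})\Rightarrow(\ref{item1.integrationequation})$ comes for free, whereas the paper proves $(\ref{item1.integrationequation})\iff(\ref{item4.integrationequation})$ separately; the substance is identical.
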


\begin{proof} The implications $\eqref{item2.integrationequation}\Rightarrow \eqref{item3.integrationequation}\Rightarrow \eqref{item4.integrationequation}$  are completely trivial.
For the equivalence $\eqref{item1.integrationequation}\iff \eqref{item4.integrationequation}$ it is sufficient to write
\[ (\debar+\bl_{\xi})^2=\frac{1}{2}[\debar+\bl_{\xi},\debar+\bl_{\xi}]=[\debar,\bl_{\xi}]+
\frac{1}{2}[\bl_{\xi},\bl_{\xi}]=\bl_{-\debar\xi+\frac{1}{2}[\xi,\xi]}\]
and keep in mind the injectivity of $\bl$. At last, for the implication
$\eqref{item1.integrationequation}\Rightarrow \eqref{item2.integrationequation}$ we have
\[[-\bi_{\xi},d]=[d,\bi_{\xi}]=[\de,\bi_{\xi}]+[\debar,\bi_{\xi}]=\bl_\xi+\bi_{\debar\xi},\qquad
[-\bi_{\xi},\bi_{[\xi,\xi]}]=[-\bi_{\xi},\bi_{\debar\xi}]=0,\]
and therefore
\[ \begin{split}
e^{-\bi_{\xi}}de^{\bi_{\xi}}&=d+\sum_{n\ge 0}\frac{[-\bi_{\xi},-]^n}{(n+1)!}([-\bi_\xi,d])\\
&=d+\bl_\xi+\bi_{\debar\xi}+\sum_{n\ge 1}\frac{[-\bi_{\xi},-]^n}{(n+1)!}(\bl_\xi+\bi_{\debar\xi})\\
&=d+\bl_\xi+\bi_{\debar\xi}-\frac{1}{2}[\bi_{\xi},\bl_\xi]-
\sum_{n\ge 1}\frac{[-\bi_{\xi},-]^n}{(n+2)!}([\bi_{\xi},\bl_\xi])\\
&=d+\bl_\xi+\bi_{\debar\xi}-\frac{1}{2}\bi_{[\xi,\xi]}-
\sum_{n\ge 1}\frac{[-\bi_{\xi},-]^n}{(n+2)!}(\bi_{[\xi,\xi]})\\
&=d+\bl_\xi\,.
\end{split}\]
\end{proof}

In the above setup, let  $n$ be the dimension of $X$. For every $0\le p\le n$ and for every
$\xi\in A^{0,1}_X(\Theta_X)$ denote
\[ F^p_{\xi}=e^{\bi_{\xi}}(A_{X}^{\ge p,*})\subset A^{*,*}_X\;.\]
In particular for $\xi=0$ we recover the usual Hodge filtration of the de Rham complex.
It follows from Lemma~\ref{lem.integrationequation} that
$F^p_{\xi}$ is a subcomplex of $A^{*,*}_X$ whenever $\xi$ is integrable and
\[ e^{\bi_{\xi}}\colon (F^p_0,d+\bl_{\xi})\to (F^p_\xi,d)\]
is an isomorphism of complexes.

\begin{lemma}\label{lem.determinantal1}
Let $\xi,\eta\in A^{0,1}_X(\Theta_X)$ be two integrable sections:
then the image of
\[ H^*(F^n_{\xi},d)\to H^*(X,\C)=H^*(A^{*,*}_X,d)\]
is contained in the image of
\[ H^*(F^1_{\eta},d)\to H^*(X,\C)=H^*(A^{*,*}_X,d)\]
if and only if for every $x\in A^{n,0}_X$ such that $(\debar+\bl_\xi)x=0$ there exists
$y\in A_X^{0,n-1}$ such that
\[\frac{\,\bi_{\xi-\eta}^n\,}{n!}\,x=(\debar+\bl_{\eta})y\;.\]
\end{lemma}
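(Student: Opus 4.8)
The plan is to convert the stated inclusion of images into the vanishing of a single map in cohomology and then to compute that map by transporting everything to the untwisted filtration along the exponential isomorphisms $e^{\bi_\xi}\colon(F^p_0,d+\bl_\xi)\to(F^p_\xi,d)$ recalled after Lemma~\ref{lem.integrationequation}. Throughout I write $D_\xi=d+\bl_\xi$ and $D_\eta=d+\bl_\eta$, which are differentials by Lemma~\ref{lem.integrationequation} because $\xi,\eta$ are integrable, and I use repeatedly that $[\bi_\eta,\bi_\mu]=0$, so that $e^{-\bi_\eta}e^{\bi_\xi}=e^{\bi_{\xi-\eta}}$ and $e^{\bi_\eta}e^{\bi_{\xi-\eta}}=e^{\bi_\xi}$. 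Since $n=\dim X$ we have $F^n_0=A^{\ge n,*}_X=A^{n,*}_X$, on which $d$ agrees with $\debar$ and $D_\xi=\debar+\bl_\xi$; thus $e^{\bi_\xi}$ identifies $H^*(F^n_\xi,d)$ with $H^*(A^{n,*}_X,\debar+\bl_\xi)$, and each of its classes is represented by some $e^{\bi_\xi}x$ with $x\in A^{n,*}_X$ and $(\debar+\bl_\xi)x=0$.

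The first step is the long exact cohomology sequence of $0\to F^1_\eta\to A^{*,*}_X\to A^{*,*}_X/F^1_\eta\to0$: the image of $H^*(F^1_\eta,d)\to H^*(X,\C)$ equals the kernel of the projection $H^*(X,\C)\to H^*(A^{*,*}_X/F^1_\eta,d)$, so the asserted inclusion of images is equivalent to the vanishing of the composite $H^*(F^n_\xi,d)\to H^*(X,\C)\to H^*(A^{*,*}_X/F^1_\eta,d)$. Since $e^{\bi_\eta}$ is an automorphism of $A^{*,*}_X$ sending the subcomplex $F^1_0=A^{\ge1,*}_X$ of $(A^{*,*}_X,D_\eta)$ onto $F^1_\eta$, it induces an isomorphism of complexes $(A^{*,*}_X/A^{\ge1,*}_X,\overline{D_\eta})\xrightarrow{\sim}(A^{*,*}_X/F^1_\eta,d)$; identifying $A^{*,*}_X/A^{\ge1,*}_X$ with $A^{0,*}_X$ and using that $\bi_\eta$ annihilates $A^{0,*}_X$, a direct computation gives $\overline{D_\eta}=(\debar+\bl_\eta)|_{A^{0,*}_X}$.

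The second step is to chase a class. Given $x\in A^{n,*}_X$ with $(\debar+\bl_\xi)x=0$, the image of $[e^{\bi_\xi}x]$ in $H^*(A^{0,*}_X,\debar+\bl_\eta)$ under the inverse of the isomorphism above is the class of any $a\in A^{0,*}_X$ with $e^{\bi_\eta}a\equiv e^{\bi_\xi}x$ modulo $F^1_\eta$, that is, with $a\equiv e^{-\bi_\eta}e^{\bi_\xi}x=e^{\bi_{\xi-\eta}}x$ modulo $A^{\ge1,*}_X$; thus $a$ is the $A^{0,*}_X$-component of $e^{\bi_{\xi-\eta}}x$. Because $\bi_{\xi-\eta}$ lowers the holomorphic degree by one, that component is exactly $\tfrac{1}{n!}\bi_{\xi-\eta}^n x$ (this map is the top part of the composite of morphisms of complexes $(A^{*,*}_X,D_\xi)\xrightarrow{e^{\bi_\xi}}(A^{*,*}_X,d)\xrightarrow{e^{-\bi_\eta}}(A^{*,*}_X,D_\eta)$, so it is itself a morphism $(A^{n,*}_X,\debar+\bl_\xi)\to(A^{0,*}_X,\debar+\bl_\eta)$). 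Therefore the inclusion of images holds if and only if $\tfrac{1}{n!}\bi_{\xi-\eta}^n x\in(\debar+\bl_\eta)(A^{0,*}_X)$ for every $x\in A^{n,*}_X$ with $(\debar+\bl_\xi)x=0$.

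The last step reduces this to bidegree $(n,0)$: for $x\in A^{n,q}_X$ one has $\tfrac{1}{n!}\bi_{\xi-\eta}^n x\in A^{0,n+q}_X$, which vanishes identically as soon as $q\ge1$ since $n=\dim X$, so only $x\in A^{n,0}_X$ yields a nonvacuous condition, and for such $x$ being $(\debar+\bl_\eta)$-exact in $A^{0,n}_X$ means precisely that there is $y\in A^{0,n-1}_X$ with $\tfrac{1}{n!}\bi_{\xi-\eta}^n x=(\debar+\bl_\eta)y$, as stated. I expect the only genuine obstacle to be the bookkeeping of the second step — pinning down the induced differential on $A^{*,*}_X/F^1_\eta$ and locating the correct term of $e^{\bi_{\xi-\eta}}x$ — everything else being formal, built on Lemma~\ref{lem.integrationequation} and the long exact sequence of a pair.
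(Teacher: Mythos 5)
Your argument is correct and is essentially the paper's own proof in slightly different clothing: where you pass to the quotient complex $A^{*,*}_X/F^1_\eta$ via the long exact sequence and transport the differential by $e^{\bi_\eta}$, the paper works directly with representatives, writing $e^{-\bi_\eta}(e^{\bi_\xi}x-dz)=e^{\bi_{\xi-\eta}}x-(d+\bl_\eta)e^{-\bi_\eta}z\in A^{>0,*}_X$ and extracting the $(0,*)$-component — the same computation that isolates $\tfrac{1}{n!}\bi_{\xi-\eta}^n x$ and kills the cases $x\in A^{n,>0}_X$ for degree reasons. No gaps.
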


\begin{proof}
Let $e^{\bi_{\xi}}x$ be an element of $F^n_{\xi}$, with $x\in A^{n,*}_X$; we have already
proved that $de^{\bi_{\xi}}x=0$ if and only if $(\debar+\bl_\xi)x=0$.
Then the cohomology class of $e^{\bi_{\xi}}x$ belongs to the image of $H^*(F^1_{\eta},d)\to H^*(X,\C)$ if and only if
there exists  $z\in A^{*,*}_X$
such that $e^{\bi_{\xi}}x-dz\in F^1_{\eta}$ or equivalently if and only if
\[ e^{-\bi_{\eta}}(e^{\bi_{\xi}}x-dz)=e^{\bi_{\xi}-\bi_{\eta}}x-(d+\bl_{\eta})e^{-\bi_{\eta}}z
\in F^1_{0}=A_X^{>0,*}\;.\]
When $x\in A^{n,>0}_X$ the above equation is verified for $z=0$, while for
$x\in A^{n,0}_X$ the above equation admits a solution if and only if
\[\frac{\bi_{\xi-\eta}^n}{n!}\,x=(\debar+\bl_{\eta})y\]
where $y$ is the component  of $e^{-\bi_{\eta}}z$ of type $(0,n-1)$.
\end{proof}

Assume now that $X$ is  a compact K\"{a}hler manifold, i.e.,
a compact complex manifold equipped with K\"{a}hler metric,
then the Laplacians associated to the operators $d,\de,\debar$
satisfy the well known equalities  \cite[p. 44]{Weil}:
\[\Delta_{\debar}=\Delta_{\de}=\frac{1}{2}\Delta_d\]
and therefore they determine the same spaces of harmonic forms
\[ H^{p,q}_X=\ker(\Delta\colon A^{p,q}_X\to A^{p,q}_X),\qquad  \Delta=\Delta_{\debar},\Delta_{\de},\Delta_d\,.\]
Denoting by $\imath\colon H^{p,q}_X \to A^{p,q}_X$ the inclusion, by
$\pi\colon A^{p,q}_X \to H^{p,q}_X$ the harmonic projection and by $G_{\debar}$ the Green operator for
$\Delta_{\debar}$ we have the equalities \cite[p. 66]{Weil}:
\[ \de \imath=\pi\de=\debar\, \imath=\pi\debar=\pi G_{\debar}= G_{\debar}\, \imath=0,\qquad \Delta_{\debar}G_{\debar}=G_{\debar}\Delta_{\debar}=Id-\imath\pi\,,\]
and by Hodge theory the inclusions $\imath\colon (H^{p,*}_X,0) \to (A^{p,*}_X,\debar)$ are quasi-isomorphisms of differential graded vector spaces. Moreover, we have   the following commuting relations in the graded Lie algebra
$\Hom^*(A^{*,*}_X,A^{*,*}_X)$ \cite[pp. 44, 45 and  67]{Weil}:
\[ \Delta_{\debar}=[\debar,\debar^*], \quad
[\de,\debar^*]=[\de,\Delta_\debar]=0,\quad [\de,G_\debar]=[\debar,G_\debar]=[\debar^*,G_\debar]=0\;.\]
Alongside to the excellent Weil's book we also refer to \cite{ManRendiconti,Voisin,Wells} as additional resources
for the above formulas.

\begin{definition}\label{def:propagator} Given a compact K\"{a}hler manifold $X$,  the operator
\[ h=-\debar^*G_{\debar}=-G_{\debar}\debar^*\in \Hom^{-1}(A_X^{*,*},A_X^{*,*})\]
will be called the \emph{$\debar$-propagator}.
\end{definition}

It is straightforward to verify that the $\debar$-propagator satisfies the following identities
\[  [\debar,h]=\debar h+h\debar=\imath\pi-Id,\quad h\imath=\pi h=h^2=0,\quad
[\de,h]=0,\quad [h\de,\debar]=[h,\de\debar]=\de\;.\]

It is worth to point out that the above  equalities give a simple and short  proof of the equality
$\debar\de(A_X^{*,*})=\ker\debar\cap \de(A_X^{*,*})$, and more precisely that a $\de$-exact element $x=\de\alpha$ is $\debar$-closed if and only if $x=\debar\de h\alpha$: in fact  we can write
\[ x=\de \alpha=[h,\de\debar]\alpha= h\de\debar\alpha-\de\debar  h\alpha=
\debar\de h\alpha-h\debar x\;.\]
Similarly the degeneration of the Hodge to de Rham spectral sequence can be proved as a simple consequence of the
formulas $[\de,h\de]=0$, $[h\de,\debar]=\de$, since they easily imply the equality
\[e^{h\de} \debar e^{-h\de}=(Id+h\de)\debar(Id-h\de)=\de+\debar\,.\]

According to the terminology introduced by  Eilenberg and Mac Lane \cite{eilmactw}, cf. also \cite{HK,perturbation},
we may express the equalities
$\debar h+h\debar=\imath\pi-Id$, $h\imath=\pi h=h^2=0$, by saying that for every $k$
the diagram
\[\xymatrix{(H^{k,*}_X,0)\ar@<.4ex>[r]^\imath&(A^{k,*}_X,\debar)\ar@<.4ex>[l]^\pi\ar@(ul,ur)[]^h}\]
is a contraction of complexes.

\bigskip
\section{Infinitesimal deformations and variations of Hodge structures}

According to the standard terminology adopted in deformation theory, by an infinitesimal deformation we mean a deformation over a local Artin ring; when  the Artin ring has square zero  maximal ideal we shall speak of first order deformations.
Let $\Art$ be the category of local Artin $\C$-algebras with residue field $\C$; unless otherwise specified, for every $B\in \Art$ we shall denote by $\mathfrak{m}_B$ its maximal ideal.

Given a complex manifold $X$, let's denote by $\sA^{p,q}_X$ the sheaf of smooth differentiable forms of type $(p,q)$  on $X$.
For a local Artin algebra $B$
and a section $\xi\in A^{0,1}_X(\Theta_X)\otimes\mathfrak{m}_B$, one can consider the sheaf of $B$-modules on $X$:
\[\Oh_{X_{\xi}}=\ker(\debar+\bl_{\xi}\colon  \sA^{0,0}_X\otimes B\to \sA^{0,1}_X\otimes B)\,.\]
It is well known  that $\Oh_{X_{\xi}}$ is a sheaf of flat $B$-modules and
$\Oh_{X_{\xi}}\otimes_B\C=\Oh_X$ if and only if $(\debar+\bl_{\xi})^2=0$. Thus, when $\xi$ is integrable
the local ringed space $X_{\xi}=(X,\Oh_{X_{\xi}})$ is a deformation of $X$ over $\Spec(B)$ and every deformation is obtained, up to isomorphism, in this way. Finally, two integrable sections $\xi,\eta\in A^{0,1}_X(\Theta_X)\otimes\mathfrak{m}_B$ give isomorphic deformations if and only if there exists $a\in A^{0,0}_X(\Theta_X)\otimes\mathfrak{m}_B$ such that $e^{\bl_a}(\debar+\bl_{\xi})=(\debar+\bl_{\eta})e^{\bl_a}$:
for a complete and detailed proof of the above assertions see e.g. \cite{Iaconophd}.

\begin{proposition}\label{prop.periodgeometric} 
In the notation above, for every integrable section $\xi\in A^{0,1}_X(\Theta_X)\otimes\mathfrak{m}_B$ and every positive integer $p$
let $\sF_{\xi}^p\subset \sA^{*,*}_X\otimes B$ be the ideal sheaf
generated by the image of the multiplication map
\[\bigwedge^p d\Oh_{X_{\xi}}\to \bigoplus_{i+j=p} \sA^{i,j}_X\otimes B \,.\]
Then
\[F^p_{\xi}=e^{\bi_\xi}\left(A_X^{\ge p,*}\otimes B\right)=\Gamma(X,\sF_{\xi}^p)\subset A^{*,*}_X\otimes B\;.\]
In particular the subcomplexes $F^p_{\xi}$ represent the variation of the Hodge filtration along the infinitesimal deformation $X_{\xi}$.
\end{proposition}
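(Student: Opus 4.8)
The plan is to reduce the statement to a purely local question about sheaves on $X$ and then compute in local holomorphic coordinates. First I would establish the chain of equalities
\[
F^p_\xi = e^{\bi_\xi}\!\left(A_X^{\ge p,*}\otimes B\right) = \Gamma(X,\sF^p_\xi)
\]
by proving the second equality at the level of sheaves, i.e. showing that the subsheaf $e^{\bi_\xi}(\sA^{\ge p,*}_X\otimes B)\subset \sA^{*,*}_X\otimes B$ coincides with the ideal sheaf $\sF^p_\xi$ generated by $p$-fold products of elements of $d\Oh_{X_\xi}$. Since both sides are coherent-type subsheaves of the (flat, finite-rank-over-the-$\sA$-module) $B$-module $\sA^{*,*}_X\otimes B$, it suffices to check the equality stalkwise, equivalently on a small polydisk where $X_\xi$ is holomorphically trivialisable in the following precise sense.

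The key technical input is a local normal form for $\Oh_{X_\xi}$. Recall from Lemma~\ref{lem.integrationequation} that integrability of $\xi$ is equivalent to $e^{-\bi_\xi}de^{\bi_\xi}=d+\bl_\xi$, and that $e^{\bi_\xi}$ is an isomorphism of complexes $(F^p_0,d+\bl_\xi)\to(F^p_\xi,d)$. The corresponding statement at the level of functions is: on a sufficiently small open $U$ there exist $z_1,\dots,z_n\in \sA^{0,0}_X(U)\otimes B$ with $(\debar+\bl_\xi)z_j=0$, i.e. $z_j\in\Oh_{X_\xi}(U)$, reducing mod $\mathfrak m_B$ to a holomorphic coordinate system on $U$; this is exactly the content of the Newlander--Nirenberg theorem applied to the integrable almost-complex structure $X_\xi$ over the Artin base, and it is the statement recalled (with reference to \cite{Iaconophd}) that deformations arise this way. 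Granting this, $dz_1,\dots,dz_n$ are $\debar$-type-free generators of the ideal, and a direct computation shows $dz_j = e^{\bi_\xi}(\partial z_j^{(0)} + \text{higher}) $, more precisely that $e^{-\bi_\xi}dz_j$ has no component in $\sA^{0,>0}_X\otimes B$ and has $(1,0)$-part a $B$-basis of $\sA^{1,0}_X(U)\otimes B$ over $\Oh_{X_\xi}(U)$. Hence the ideal generated by $p$-fold wedges of the $dz_j$ is $e^{\bi_\xi}(\sA^{\ge p,*}_X\otimes B)|_U$, which is what we want.

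The main obstacle, and the step I would spend the most care on, is the local trivialisation: one must justify that the relative Newlander--Nirenberg statement holds over an arbitrary local Artin base $B$, not just over $\C$, and that the resulting functions $z_j$ can be taken to depend in the required way on $\xi$ so that $dz_j\in e^{\bi_\xi}(\sA^{\ge 1,*}_X\otimes B)$. The cleanest route is to build the $z_j$ inductively along a filtration of $\mathfrak m_B$ by ideals of decreasing length, solving at each stage a $\debar$-equation on the polydisk (where $H^{0,1}=0$), exactly as in the classical proof; alternatively one can invoke the results of \cite{Iaconophd} directly. Once the local picture is in place, the equality of global sections $F^p_\xi=\Gamma(X,\sF^p_\xi)$ is immediate since $\sF^p_\xi$ is by construction the sheafification of $p\mapsto e^{\bi_\xi}(A^{\ge p,*}_X\otimes B)$ and the latter already forms a sheaf, being the image of a morphism of sheaves; the final sentence about representing the variation of the Hodge filtration is then just the observation that the fibre over $\Spec\C$ recovers $A^{\ge p,*}_X$, together with flatness of $\Oh_{X_\xi}$ which guarantees the $\sF^p_\xi$ are themselves flat deformations of the $A^{\ge p,*}_X$.
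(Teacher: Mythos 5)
Your argument is correct, but be aware that the paper itself does not prove this proposition: its ``proof'' is a pointer to \cite[Thm.\ 5.1]{FMperiods}, so what you have written is essentially a self-contained reconstruction of the argument from that reference rather than an alternative to anything in the present text. Two steps in your sketch should be made explicit. First, the central computation is cleaner than you state it: for $f\in\Oh_{X_\xi}(U)$ one has $\bi_\xi f=0$, hence by Lemma~\ref{lem.integrationequation} $e^{-\bi_\xi}df=(d+\bl_\xi)f=\de f+(\debar+\bl_\xi)f=\de f$, which is exactly of type $(1,0)$, and equivalently $df=e^{\bi_\xi}(\de f)$. Second, to pass from ``the generators $df_1\wedge\cdots\wedge df_p$ lie in $e^{\bi_\xi}(\sA^{\ge p,*}_X\otimes B)$'' to an equality of \emph{ideals} you need that $e^{\bi_\xi}$ is an automorphism of the graded algebra $\sA^{*,*}_X\otimes B$; this holds because $\bi_\xi$ is a nilpotent degree-zero derivation of the wedge product (a fact the paper uses later, in the K3 section), and it yields both inclusions at once: $e^{\bi_\xi}(\sA^{\ge p,*}_X\otimes B)$ is an ideal containing every generator of $\sF^p_\xi$, and locally it is generated by the elements $e^{\bi_\xi}(\de z_{j_1}\wedge\cdots\wedge\de z_{j_p})=dz_{j_1}\wedge\cdots\wedge dz_{j_p}$ once the $\de z_j$ are known to be a frame of $\sA^{1,0}_X(U)\otimes B$ over $\sA^{0,0}_X(U)\otimes B$ (invertibility of the transition matrix modulo the nilpotent ideal $\mathfrak m_B$). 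The remaining input, the existence of $z_1,\dots,z_n\in\Oh_{X_\xi}(U)$ lifting holomorphic coordinates, is correctly identified by you as the crux, and your inductive strategy is the right one: over an Artin base no analysis is needed, since if the $z_j$ are constructed over $B/J$ with $J\mathfrak m_B=0$ and $\tilde z_j$ is any lift, then $(\debar+\bl_\xi)\tilde z_j\in\sA^{0,1}_X(U)\otimes J$ is $\debar$-closed (its image under $\debar+\bl_\xi$ lands in $\sA^{0,2}_X(U)\otimes\mathfrak m_BJ=0$) and hence $\debar$-exact on a polydisk, so $\tilde z_j$ can be corrected within $\sA^{0,0}_X(U)\otimes J$. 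With these points spelled out your proof is complete and agrees with the one the paper delegates to \cite{FMperiods}.
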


\begin{proof} This is proved in \cite[Thm. 5.1]{FMperiods}.
It is worth to recall that the filtration $F^p_{\xi}$ depends on the
section $\xi$ and we shall see later that  it can be recovered from the 
deformation $X_{\xi}$ only up to  automorphisms of
the de Rham complex inducing the identity in cohomology.
\end{proof}

When $X$ is compact K\"{a}hler, and $\xi\in A^{0,1}_X(\Theta_X)\otimes\mathfrak{m}_B$ is integrable,
a useful description of the cohomology of the complex $(A^{*,*}_X\otimes B,\debar+\bl_{\xi})$ in terms of the 
$\debar$-propagator
can be obtained by homological perturbation theory. 

\begin{lemma}\label{lem.perturbation}
Let $X$ be compact K\"{a}hler with $\debar$-propagator $h=-\debar^*G_{\debar}$ and denote by 
$H^{*,*}_X\subset A^{*,*}_X$ the subcomplex of harmonic forms. If $B\in \Art$ and 
$\xi\in A^{0,1}_X(\Theta_X)\otimes\mathfrak{m}_B$ is integrable, then the  map
\[ \imath_{\xi}=(Id-h\bl_{\xi})^{-1}\imath=\sum_{n\ge 0}(h\bl_{\xi})^n\imath\colon (H^{*,*}_X\otimes B,0)\to (A^{*,*}_X\otimes B,\debar+\bl_{\xi})\]
is a
homotopy equivalence of complexes of $B$-modules with homotopy inverse
\[ \pi_{\xi}=\pi(Id-\bl_{\xi} h)^{-1}=\sum_{n\ge 0}\pi(\bl_{\xi}h)^n\colon
(A^{*,*}_X\otimes B,\debar+\bl_{\xi})\to (H^{*,*}_X\otimes B,0)\;.\]
In particular the
cohomology groups of $(A^{*,*}_X\otimes B,\debar+\bl_{\xi})$ are free $B$-modules.
\end{lemma}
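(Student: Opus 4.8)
The plan is to apply the Homological Perturbation Lemma (also called the Basic Perturbation Lemma) to the contraction
\[
\xymatrix{(H^{*,*}_X\otimes B,0)\ar@<.4ex>[r]^\imath&(A^{*,*}_X\otimes B,\debar)\ar@<.4ex>[l]^\pi\ar@(ul,ur)[]^h}
\]
obtained by tensoring the contraction recorded at the end of Section~1 with $B$, together with the perturbation $\delta=\bl_{\xi}$ of the differential $\debar$. First I would check that the perturbation is admissible: since $\xi\in A^{0,1}_X(\Theta_X)\otimes\mathfrak{m}_B$ and $\mathfrak{m}_B$ is nilpotent, the composite $h\bl_{\xi}$ (and likewise $\bl_{\xi}h$) is nilpotent on $A^{*,*}_X\otimes B$, so the geometric series $(Id-h\bl_{\xi})^{-1}=\sum_{n\ge 0}(h\bl_{\xi})^n$ converges (it is a finite sum). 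This is exactly the admissibility hypothesis needed for the perturbation lemma. Moreover $(\debar+\bl_{\xi})^2=0$ by Lemma~\ref{lem.integrationequation}, so the perturbed differential is genuinely a differential.

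Next I would write down what the perturbation lemma outputs. With side conditions $h\imath=\pi h=h^2=0$ from Definition-adjacent identities in Section~1, the lemma produces a new contraction of $(A^{*,*}_X\otimes B,\debar+\bl_{\xi})$ onto $(H^{*,*}_X\otimes B, d_\infty)$ with
\[
\imath_\infty=(Id-h\bl_{\xi})^{-1}\imath,\qquad
\pi_\infty=\pi(Id-\bl_{\xi}h)^{-1},\qquad
d_\infty=\pi\bl_{\xi}(Id-h\bl_{\xi})^{-1}\imath,
\]
and a perturbed homotopy $h_\infty$. So the asserted formulas for $\imath_{\xi}$ and $\pi_{\xi}$ are precisely $\imath_\infty$ and $\pi_\infty$. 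The one extra thing to verify is that $d_\infty=0$, i.e. that the perturbed differential on harmonic forms vanishes. For this I would expand $d_\infty=\sum_{n\ge 0}\pi\bl_{\xi}(h\bl_{\xi})^n\imath$ and use the Kähler identities: each term contains a leftmost factor $\pi\bl_{\xi}$ acting on a harmonic form. Since $\bl_{\xi}=[\de,\bi_{\xi}]$ and $\pi\de=0=\debar\imath$, one has $\pi\bl_{\xi}\imath=\pi\de\bi_{\xi}\imath-\pi\bi_{\xi}\de\imath$; the first summand vanishes because $\pi\de=0$, and for $n=0$ the whole term is $\pi\de\bi_{\xi}\imath$ which is zero. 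For the higher terms, the rightmost $h\bl_{\xi}\imath=-\debar^*G_{\debar}[\de,\bi_{\xi}]\imath$ lands in $\de$-exact-plus-harmonic-complement territory, and $[\de,G_{\debar}]=[\de,\debar^*]=0$ lets one commute $\de$ to the left until it hits $\pi$; the leftmost $\pi\de$ then annihilates everything. I expect this vanishing of $d_\infty$ to be the main obstacle: it requires a careful bookkeeping argument with the commuting relations $[\de,h]=0$, $[\de,\debar^*]=0$ rather than a one-line observation, and one must be slightly careful about which component of a form the operators act on.

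Finally, once $d_\infty=0$ is established, $\imath_\xi$ and $\pi_\xi$ are mutually inverse homotopy equivalences between $(A^{*,*}_X\otimes B,\debar+\bl_{\xi})$ and $(H^{*,*}_X\otimes B,0)$; the latter has zero differential and is a free $B$-module (being $H^{*,*}_X\otimes B$ with $H^{*,*}_X$ finite-dimensional over $\C$ by compactness), so its cohomology equals itself and is free over $B$. Hence the cohomology of $(A^{*,*}_X\otimes B,\debar+\bl_{\xi})$ is free over $B$, proving the final clause. Alternatively, freeness can be seen directly: the homotopy equivalence $\imath_\xi$ identifies the cohomology with a free module, and since $\imath_\xi$ reduces modulo $\mathfrak{m}_B$ to the quasi-isomorphism $\imath$, one could also invoke a standard Artinian descent / Nakayama-type argument, but routing everything through the perturbation lemma is cleaner and is the route I would take.
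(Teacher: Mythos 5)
Your proposal is correct and follows essentially the same route as the paper: apply the homological perturbation lemma to the Hodge-theoretic contraction $(\imath,\pi,h)$ tensored with $B$, then kill the induced differential on $H^{*,*}_X\otimes B$ using $\bl_{\xi}\imath=\de\bi_{\xi}\imath$ together with $[\de,h]=[\de,\bl_{\xi}]=0$ and $\pi\de=0$. The only difference is that you anticipate the vanishing of $d_\infty$ to need careful bookkeeping, whereas the commutation $\pi(\bl_{\xi}h)^n\de=\pm\pi\de(\bl_{\xi}h)^n=0$ makes it a one-line computation, exactly as in the paper.
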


\begin{proof} In view of the formulas $[\debar,h]=\debar h+h\debar=\imath\pi-Id$, $h\imath=\pi h=h^2=0$, the ordinary perturbation lemma \cite{HK,perturbation} tells that
\[ \imath_{\xi}\colon (H^{*,*}_X\otimes B,\delta_{\xi})\to (A^{*,*}_X\otimes B,\debar+\bl_{\xi}),\qquad
\delta_{\xi}=\sum_{n\ge 0}\pi (\bl_{\xi}h)^n\bl_{\xi}\imath\,,\]
is a homotopy equivalence of complexes of $B$-modules with homotopy inverse $\pi_{\xi}$.
It is now sufficient to observe that,
since $\de\imath=\pi\de=[\de,h]=[\de,\bl_{\xi}]=0$, we have $\bl_{\xi}h\de=\de \bl_{\xi}h$ and then
\[ \delta_{\xi}=\sum_{n\ge 0}\pi (\bl_{\xi}h)^n\bl_{\xi}\imath=
\sum_{n\ge 0}\pi (\bl_{\xi}h)^n\de\bi_{\xi}\imath=
\sum_{n\ge 0}\pi\de (\bl_{\xi}h)^n\bi_{\xi}\imath=0
\,.\]
Notice that $\pi_{\xi}\imath_{\xi}$ is the identity; moreover the ordinary perturbation lemma also says that 
\[ (\debar+\bl_{\xi})h_{\xi}+h_{\xi}(\debar+\bl_{\xi})=\imath_{\xi}\pi_{\xi}-Id,\quad\text{where}\quad
h_{\xi}=\sum_{n\ge 0}h(\bl_{\xi}h)^n\,.\] 
\end{proof}

Since $\de\imath_{\xi}=\sum_{n\ge 0}\de(h\bl_{\xi})^n\imath=\sum_{n\ge 0}(h\bl_{\xi})^n\de\imath=0$, the
Lemma~\ref{lem.perturbation} also gives another proof of the fact that
$H^*(F^{p+1}_{\xi})\to H^*(F^p_{\xi})$ is split injective for every $p$. In fact $(d+\bl_{\xi})\imath_{\xi}=0$
and the conclusion follows by a straightforward chasing in the diagram of complexes
\[ \xymatrix{0\ar[r]&(A^{>p,*}_X\otimes B,d+\bl_{\xi})\ar[r]\ar[d]^{e^{\bi_{\xi}}}&
(A^{\ge p,*}_X\otimes B,d+\bl_{\xi})\ar[r]\ar[d]^{e^{\bi_{\xi}}}&(A^{p,*}_X\otimes B,\debar+\bl_{\xi})\ar[r]
&0\\
&(F^{p+1}_{\xi},d)\ar[r]&(F^{p}_{\xi},d)&(H^{p,*}_X,0)\ar[u]^{\imath_{\xi}}\ar[ul]^{\imath_{\xi}}&}\]
in which the upper row is exact and every vertical arrow is a quasi-isomorphism.

The next proposition  shows that the results of Lemma~\ref{lem.perturbation} hold in a stronger form 
when the de Rham complex is replaced by the subcomplex of $\de$-closed forms.

\begin{proposition}\label{prop.perturbation3}
If $X$ is compact K\"{a}hler and $\xi\in A^{0,1}_X(\Theta_X)\otimes\mathfrak{m}_B$ is integrable, then the  map
\[ (Id-h\bl_{\xi})\colon (\ker\de\otimes B,\debar+\bl_{\xi})\to (\ker\de\otimes B,\debar)\]
is an isomorphism of complexes. In particular, if $\dim X=n$, then the map
\[ \sum_{i\ge 0}(h\bl_{\xi})^i\colon (A^{n,*}_X\otimes B,\debar)\to (A^{n,*}_X\otimes B,\debar+\bl_{\xi})\]
is an isomorphism of complexes.
\end{proposition}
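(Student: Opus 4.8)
The plan is to verify that the two stated maps are well-defined morphisms of complexes which are invertible, the first claim being the substantial one and the second a corollary of it.

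First I would check that $(Id-h\bl_\xi)$ maps $\ker\de\otimes B$ to itself. Since $[\de,h]=0$ and $[\de,\bl_\xi]=0$ (the latter because $\bl_\xi=[\de,\bi_\xi]$, so $\de\bl_\xi=-\bl_\xi\de$, hence $\de$ commutes with $\bl_\xi$ up to sign; more directly $[\de,\bl_\xi]=0$ follows from $\bl_\xi$ being a derivation built from $\de$), the operator $h\bl_\xi$ preserves $\de$-closed forms, so the map is well-defined on $\ker\de\otimes B$. Next I would show it intertwines the differentials, i.e. $(Id-h\bl_\xi)(\debar+\bl_\xi)=\debar(Id-h\bl_\xi)$ on $\ker\de$; expanding, this amounts to $\bl_\xi-h\bl_\xi\debar-h\bl_\xi\bl_\xi+\debar h\bl_\xi=0$, and using $[\debar,h]=\imath\pi-Id$ together with $h\imath=0$, $\pi h=0$, and the derived-bracket identity $\bl_\xi h\de=\de\bl_\xi h$ from the proof of Lemma~\ref{lem.perturbation} — which on $\ker\de$ lets one replace the awkward $\bl_\xi\bl_\xi$ term — the identity should collapse. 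The key point is that on $\de$-closed forms the perturbed differential $\delta_\xi$ of Lemma~\ref{lem.perturbation} vanishes, and the same mechanism (insertion of $\de$ followed by $[\de,h]=0$ and $\pi\de=\de\imath=0$) forces the correction terms to disappear here too.

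For invertibility I would exhibit the inverse explicitly as $\sum_{i\ge 0}(h\bl_\xi)^i$. This is well-defined because $B$ is Artinian, so $\mathfrak{m}_B$ is nilpotent and the series terminates; and it is the two-sided inverse of $(Id-h\bl_\xi)$ formally. That it again preserves $\ker\de\otimes B$ follows as above. For the second assertion, note that $A^{n,*}_X\otimes B\subset\ker\de\otimes B$ since $\de$ raises the holomorphic degree and there is nothing in bidegree $(n+1,*)$; moreover $h\bl_\xi=h[\de,\bi_\xi]$ and on $A^{n,*}$ one has $\de\bi_\xi(A^{n,*})=0$ again for degree reasons, so $h\bl_\xi=h\de\bi_\xi=-\bl_\xi h\,\de$? — more carefully, on $A^{n,*}$, $\bl_\xi=[\de,\bi_\xi]$ acts as $\de\bi_\xi$ only, and since $\bi_\xi$ drops holomorphic degree to $n-1$, the composite $(h\bl_\xi)$ does land back after applying $\de$; iterating, $(h\bl_\xi)^i$ preserves $A^{n,*}_X\otimes B$. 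Hence restricting the isomorphism $(Id-h\bl_\xi)^{-1}=\sum_i(h\bl_\xi)^i$ of the previous part to the subcomplex $A^{n,*}_X\otimes B$ of $\ker\de\otimes B$ gives the claimed isomorphism.

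I expect the main obstacle to be the intertwining identity in the first part: verifying cleanly that $(Id-h\bl_\xi)$ conjugates $\debar+\bl_\xi$ to $\debar$ on the nose (an honest isomorphism, not merely a quasi-isomorphism) requires using \emph{all} of the K\"ahler identities $[\de,h]=0$, $h^2=0$, $h\imath=\pi h=0$, $[\debar,h]=\imath\pi-Id$ in combination, plus the crucial vanishing $\delta_\xi=0$ established in Lemma~\ref{lem.perturbation}, rather than any single one of them. The bookkeeping of where $\de$-closedness is invoked — it is precisely what kills the terms that would otherwise only vanish in cohomology — is the delicate point, and I would organize the computation so that every correction term is manifestly of the form $(\text{something})\circ\de$ applied to a $\de$-closed form, or else manifestly zero by $h\imath=\pi h=0$.
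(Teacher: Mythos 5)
Your overall architecture coincides with the paper's: check that $h\bl_\xi$ commutes with $\de$ so the maps preserve $\ker\de\otimes B$, verify the intertwining identity by direct computation, invert $Id-h\bl_\xi$ by the geometric series (legitimate since $\mathfrak{m}_B$ is nilpotent), and restrict to $A^{n,*}_X\subset\ker\de$. The well-definedness, invertibility and restriction steps are all fine as you state them.

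The gap is in the one step you yourself flag as the crux, the chain-map identity, where the mechanism you propose does not close. The defect is
\[
\debar(Id-h\bl_\xi)-(Id-h\bl_\xi)(\debar+\bl_\xi)=h\bl_\xi\debar+h\bl_\xi^2-\debar h\bl_\xi-\bl_\xi ,
\]
and the quadratic term $h\bl_\xi^2$ is \emph{not} disposed of by the identity $\bl_\xi h\de=\de\bl_\xi h$, nor by organizing terms as $(\text{something})\circ\de$: if you push the $\de$'s through you only find that the defect applied to a $\de$-closed form is $\de$-\emph{exact}, which is not vanishing. The term that actually kills $h\bl_\xi^2$ is the integrability hypothesis, which you never invoke in this step even though it is a hypothesis of the proposition and the statement is false without it. Concretely, $h\bl_\xi^2=\tfrac{h}{2}[\bl_\xi,\bl_\xi]=\tfrac{h}{2}\bl_{[\xi,\xi]}=h\bl_{\debar\xi}=-h[\debar,\bl_\xi]$ by the Maurer--Cartan equation $\debar\xi=\tfrac12[\xi,\xi]$ and the formula $\bl_{\debar\eta}=-[\debar,\bl_\eta]$; combined with $h\bl_\xi\debar$ this leaves $-(h\debar+\debar h)\bl_\xi-\bl_\xi=-\imath\pi\bl_\xi$ by $[\debar,h]=\imath\pi-Id$. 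Only this residual term is handled by the mechanism you describe: on $\ker\de$ one has $\bl_\xi x=\de\bi_\xi x$, so $\imath\pi\bl_\xi x=\imath\pi\de\bi_\xi x=0$ because $\pi\de=0$ (the side conditions $h\imath=\pi h=0$ are not needed here). So your intuition about where $\de$-closedness enters is correct for the final term, but the proposal is missing the essential use of integrability, without which the computation does not collapse.
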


\begin{proof} We first notice that, since $\de h\bl_{\xi}=h\bl_{\xi}\de$ the above maps make sense and
are isomorphisms of graded vector spaces. We have
\[\begin{split}
\debar(Id-h\bl_{\xi})-(Id-h\bl_{\xi})(\debar+\bl_{\xi})&=h\bl_{\xi}\debar+h\bl_{\xi}^2-\debar h\bl_{\xi}-\bl_{\xi}\\
&=h\bl_{\debar\xi}-h\debar\bl_{\xi}+\frac{h}{2}[\bl_{\xi},\bl_{\xi}]-\debar h\bl_{\xi}-\bl_{\xi}\\
&=-\imath\pi\bl_{\xi}\,,
\end{split}\]
and,  whenever $\de x=0$ we get $\imath\pi\bl_{\xi}x=\imath\pi\de\bi_{\xi}x=0$.
\end{proof}

Putting together Lemma~\ref{lem.determinantal1} and Lemma~\ref{lem.perturbation}
we obtain immediately the following result:

\begin{theorem}\label{thm.determinantal2}
Let $X$ be a compact K\"{a}hler manifold of dimension $n$ with $\debar$-propagator $h$ and let
$\xi,\eta\in A^{0,1}_X(\Theta_X)\otimes\mathfrak{m}_B$ be two integrable sections:
then
\[ H^*(F^n_{\xi})\subset H^*(F^1_{\eta})\subset H^*(X,\C)\otimes B\]
if and only if the map
\[ \psi\colon H^{n,0}_X\otimes B\to H^{0,n}_X\otimes B,\qquad
\psi=\sum_{i,j\ge 0}\pi (\bl_{\eta}h)^i\bi_{\xi-\eta}^n
(h\bl_{\xi})^j\,\imath = \pi_\eta \bi_{\xi-\eta}^n\imath_\xi\]
is trivial.
\end{theorem}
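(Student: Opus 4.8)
The plan is to assemble the theorem directly from the two cited lemmas, treating it essentially as a bookkeeping exercise that identifies the two descriptions of the relevant obstruction. By Lemma~\ref{lem.determinantal1}, the inclusion $H^*(F^n_{\xi})\subset H^*(F^1_{\eta})$ inside $H^*(X,\C)\otimes B$ is equivalent to the statement that for every $x\in A^{n,0}_X\otimes B$ with $(\debar+\bl_\xi)x=0$ there exists $y\in A^{0,n-1}_X\otimes B$ with $\tfrac{1}{n!}\bi_{\xi-\eta}^n x=(\debar+\bl_{\eta})y$; in other words, that $\tfrac{1}{n!}\bi_{\xi-\eta}^n x$ is a coboundary in the complex $(A^{0,*}_X\otimes B,\debar+\bl_\eta)$ for every $(\debar+\bl_\xi)$-closed $x$ of bidegree $(n,0)$. (I will absorb the harmless constant $1/n!$, which does not affect triviality.) So the task reduces to re-expressing ``$\bi_{\xi-\eta}^n x$ is a $(\debar+\bl_\eta)$-coboundary for all closed $x$'' as ``the composite $\psi=\pi_\eta\,\bi_{\xi-\eta}^n\,\imath_\xi$ vanishes.''

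Next I would invoke Lemma~\ref{lem.perturbation} twice, once for $\xi$ and once for $\eta$. On the source side, the homotopy equivalence $\imath_\xi=\sum_{j\ge0}(h\bl_\xi)^j\imath\colon (H^{*,*}_X\otimes B,0)\to(A^{*,*}_X\otimes B,\debar+\bl_\xi)$ is a quasi-isomorphism whose image consists of $(\debar+\bl_\xi)$-closed elements, and in bidegree $(n,0)$ every harmonic form lies in $H^{n,0}_X$, so $\imath_\xi$ restricts to an isomorphism $H^{n,0}_X\otimes B\to \{x\in A^{n,0}_X\otimes B : (\debar+\bl_\xi)x=0\}$ — indeed the operators $(h\bl_\xi)^j$ preserve the first Hodge degree $n$, being built from $\bl_\xi=[\de,\bi_\xi]$, $h=-\debar^*G_\debar$, hence they act within $A^{n,*}_X\otimes B$, and in that top degree $(h\bl_\xi)^j\imath$ for $j\ge1$ lands in $A^{n,>0}_X\otimes B$ while the $j=0$ term is just $\imath$; that $\imath_\xi$ is injective on cocycles follows because $\pi_\xi\imath_\xi=\id$. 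Consequently it suffices to check the coboundary condition for $x$ ranging over $\imath_\xi(H^{n,0}_X\otimes B)$. On the target side, for a $(\debar+\bl_\eta)$-closed element $w$ of $A^{0,*}_X\otimes B$ — and note $\bi_{\xi-\eta}^n x$ has bidegree $(0,*)$, so it automatically lands in the right place and one checks it is $(\debar+\bl_\eta)$-closed using $(\debar+\bl_\xi)x=0$ exactly as in the proof of Lemma~\ref{lem.determinantal1} — the homotopy formula $(\debar+\bl_\eta)h_\eta+h_\eta(\debar+\bl_\eta)=\imath_\eta\pi_\eta-\id$ shows that $w$ is a $(\debar+\bl_\eta)$-coboundary if and only if $\pi_\eta(w)=0$, since $\pi_\eta$ is a quasi-isomorphism onto $(H^{0,*}_X,0)$ and $H^{0,n}_X$ is the only harmonic piece appearing.

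Combining the two reductions: the inclusion of images holds iff $\pi_\eta\bigl(\bi_{\xi-\eta}^n\,\imath_\xi(x)\bigr)=0$ for all $x\in H^{n,0}_X\otimes B$, i.e.\ iff the map $\psi=\pi_\eta\,\bi_{\xi-\eta}^n\,\imath_\xi\colon H^{n,0}_X\otimes B\to H^{0,n}_X\otimes B$ is identically zero. Expanding $\pi_\eta=\sum_i\pi(\bl_\eta h)^i$ and $\imath_\xi=\sum_j(h\bl_\xi)^j\imath$ gives the explicit double-sum formula in the statement (the sums being finite since $\mathfrak{m}_B$ is nilpotent, as each $\bl_\xi,\bl_\eta$ carries a factor in $\mathfrak{m}_B$), and the target is indeed $H^{0,n}_X\otimes B$ because $\bi_{\xi-\eta}^n$ drops the holomorphic degree from $n$ to $0$ while raising antiholomorphic degree by $n$.

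The main obstacle, and the only point requiring genuine care rather than formal citation, is the verification that $\imath_\xi$ really does surject onto the $(\debar+\bl_\xi)$-cocycles of bidegree $(n,0)$ inside $A^{n,0}_X\otimes B$ — equivalently, that a $(\debar+\bl_\xi)$-closed $(n,0)$-form is cohomologous, via a correction that stays in $A^{n,*}_X$, to one in the image of $\imath_\xi$, together with the bidegree bookkeeping that makes $\bi_{\xi-\eta}^n x$ land precisely in $A^{0,*}_X$ so that the target of $\psi$ is as claimed. Here Proposition~\ref{prop.perturbation3} (or its proof) is the natural tool: it shows the perturbed and unperturbed differentials on $A^{n,*}_X\otimes B$ are intertwined by $\sum_i(h\bl_\xi)^i$, so that cohomology in top holomorphic degree is computed harmonically on the nose, which is exactly what pins down both the domain $H^{n,0}_X\otimes B$ of $\psi$ and the fact that checking triviality of $\psi$ suffices. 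Once this is in place, everything else is the substitution of the series for $\imath_\xi$ and $\pi_\eta$ and the observation that the leading $(i=j=0)$ term already captures the first-order content $\pi\,\bi_{\xi-\eta}^n\,\imath$, i.e.\ Griffiths' infinitesimal period relation / the Yukawa coupling.
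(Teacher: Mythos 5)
Your argument is essentially the paper's own proof: the theorem is obtained there by combining Lemma~\ref{lem.determinantal1} with Lemma~\ref{lem.perturbation}, using Proposition~\ref{prop.perturbation3} to identify the $(\debar+\bl_{\xi})$-cocycles of bidegree $(n,0)$ with $\imath_{\xi}(H^{n,0}_X\otimes B)$, exactly as you do, and both of your reductions (on the source side via $\imath_{\xi}$ and on the target side via the homotopy formula for $\pi_{\eta}$) are correct.

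One parenthetical assertion should be corrected: $h\bl_{\xi}$ preserves the bidegree $(p,q)$, since $\bl_{\xi}=[\de,\bi_{\xi}]$ raises the antiholomorphic degree by one and $h=-\debar^*G_{\debar}$ lowers it by one, so $(h\bl_{\xi})^j\imath$ for $j\ge 1$ lands in $A^{n,0}_X\otimes\mathfrak{m}_B^j$ and \emph{not} in $A^{n,>0}_X\otimes B$ as you claim. If your claim were true it would actually undercut your own argument, since then $\imath_{\xi}(H^{n,0}_X\otimes B)$ would not lie in $A^{n,0}_X\otimes B$ and the criterion of Lemma~\ref{lem.determinantal1} could not be applied to $x=\imath_{\xi}(w)$, nor would $\bi_{\xi-\eta}^n\imath_{\xi}(w)$ land in $A^{0,n}_X\otimes B$. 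The correct justification --- bidegree preservation, plus the fact that a $\debar$-closed $(n,0)$-form is harmonic, fed into the isomorphism of complexes of Proposition~\ref{prop.perturbation3} --- is precisely what you invoke in your final paragraph, so the proof stands once the erroneous remark is deleted.
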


\begin{corollary}\label{cor.determinantal2}
Let $X$ be a compact K\"{a}hler manifold of dimension $n$ with $\debar$-propagator $h$ and
$\xi\in A^{0,1}_X(\Theta_X)\otimes\mathfrak{m}_B$ an integrable section:
then
\[ H^*(F^n_{\xi})\subset H^*(A^{>0,*}_X\otimes B)\]
if and only if the map
\[ \psi\colon H^{n,0}_X\otimes B\to H^{0,n}_X\otimes B,\qquad
\psi=\sum_{j\ge 0}\pi \bi_{\xi}^n
(h\bl_{\xi})^j\,\imath = \pi\bi_{\xi}^n\,\imath_\xi \]
is trivial.
\end{corollary}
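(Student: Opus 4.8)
The plan is to deduce this corollary from Theorem~\ref{thm.determinantal2} simply by specialising to the case $\eta = 0$, after identifying $A^{>0,*}_X\otimes B$ with $F^1_\eta$ for that choice. Recall that $F^1_0 = e^{\bi_0}(A^{\ge 1,*}_X\otimes B) = A^{>0,*}_X\otimes B$, so the inclusion $H^*(F^n_\xi)\subset H^*(A^{>0,*}_X\otimes B)$ of the corollary is literally the inclusion $H^*(F^n_\xi)\subset H^*(F^1_\eta)$ of the theorem with $\eta = 0$. Thus it remains only to check that the criterion provided by Theorem~\ref{thm.determinantal2} degenerates correctly when $\eta = 0$.

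First I would substitute $\eta = 0$ into the map $\psi$ of Theorem~\ref{thm.determinantal2}. Since $\bl_0 = 0$, the only surviving term in the sum $\sum_{i,j\ge 0}\pi(\bl_\eta h)^i\bi_{\xi-\eta}^n(h\bl_\xi)^j\imath$ is the one with $i = 0$, and $\bi_{\xi-\eta} = \bi_\xi$; hence $\psi(x) = \sum_{j\ge 0}\pi\,\bi_\xi^n(h\bl_\xi)^j\,\imath$, which is exactly the formula claimed in the corollary. In the compact notation of the theorem this is $\pi_0\,\bi_\xi^n\,\imath_\xi$, and since $\pi_0 = \pi(Id - \bl_0 h)^{-1} = \pi$, this reads $\pi\,\bi_\xi^n\,\imath_\xi$, again matching. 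So the triviality of $\psi$ as stated in the corollary is identical, term by term, to the triviality of $\psi$ in the theorem specialised at $\eta = 0$.

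The one point that genuinely needs a remark — and which I regard as the only potential obstacle — is that Theorem~\ref{thm.determinantal2} as stated requires \emph{both} arguments $\xi$ and $\eta$ to be integrable sections of $A^{0,1}_X(\Theta_X)\otimes\mathfrak{m}_B$, whereas here $\eta = 0$ does not lie in $A^{0,1}_X(\Theta_X)\otimes\mathfrak{m}_B$ unless one allows the zero section. But $\eta = 0$ is trivially integrable (it satisfies $\debar 0 = \tfrac12[0,0]$), and all the intermediate results used in the proof of the theorem — Lemma~\ref{lem.determinantal1} and Lemma~\ref{lem.perturbation} — apply verbatim to $\eta = 0$, giving $F^1_0 = A^{>0,*}_X\otimes B$ as a genuine subcomplex and $\imath_0 = \imath$, $\pi_0 = \pi$, $h_0 = h$ as the corresponding contraction data. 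Hence nothing in the argument degenerates pathologically, and the corollary follows. I would present this as a one-line deduction: \vr apply Theorem~\ref{thm.determinantal2} with $\eta = 0$, noting $F^1_0 = A^{>0,*}_X\otimes B$ and $\bl_0 = 0$.\rq\rq{}
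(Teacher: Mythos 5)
Your proposal is correct and is exactly the intended argument: the paper states this corollary as an immediate specialization of Theorem~\ref{thm.determinantal2} to $\eta=0$, using $F^1_0=A^{>0,*}_X\otimes B$, $\bl_0=0$, $\pi_0=\pi$, and $\bi_{\xi-0}=\bi_\xi$. Your only caveat is a non-issue ($0$ always lies in $A^{0,1}_X(\Theta_X)\otimes\mathfrak{m}_B$ and is trivially integrable), and you resolve it correctly anyway.
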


\bigskip
\section{Review of $A_\infty$ algebras and $L_\infty$ algebras}\label{sec:ooalgebras}

From now on we assume that the reader is familiar with the notion of $A_{\infty}$ and $L_{\infty}$-algebras; this section is devoted to fix notations and recall the basic results that we need in the sequel of this paper.

Given a graded vector space $V=\oplus V^i$ we shall denote by $|x|$ the degree of a homogeneous element; 
given an integer $n$ we shall denote by $V[n]$ the same space with the degrees shifted by $n$, i.e., $V[n]^i=V^{i+n}$ fo every $i$; finally we shall denote by 
$s\colon V[n+1]\to V[n]$  the tautological linear isomorphism of degree $+1$.

For every graded vector space $V$
we shall denote by $\operatorname{Hoch}(V)$ the graded Lie algebra of coderivations of the tensor coalgebra $T(V)=\oplus_{k\geq0}V^{\otimes k}$. Corestriction induces an isomorphism of graded vector spaces 
\[\operatorname{Hoch}(V)\rh\prod_{k\geq0}\Hom^*(V^{\otimes k},V),\quad Q\mapsto pQ=(q_0,q_1,\ldots,q_k,\ldots),\]
where we denote by $p\colon T(V)\rh V$ the natural projection. We call the $q_k\colon V^{\otimes k}\rh V$ the Taylor coefficients of $Q$. We shall denote by $Q^j_k$ the composition $V^{\otimes k}\to T(V)\xrightarrow{Q}T(V)\to V^{\otimes j}$, where the first arrow is the inclusion and the last one is the projection. A coderivation $Q$ is determined by its Taylor coefficients according to $Q^1_0=q_0:V^{\otimes0}=\K\rh V$, $Q^j_0=0$ for $j\neq1$, $Q^0_k=0$ for all $k$, 
\[ Q_k^j(v_1\otimes\cdots\otimes v_k)=\sum_{i=1}^{j}(-1)^{\sum_{h<i}|Q||v_h|}v_1\otimes\cdots\otimes q_{k-j+1}(v_i\otimes\cdots\otimes v_{k+i-j})\otimes\cdots\otimes v_k\] 
for $1\leq j\leq k+1$ and finally $Q_k^j=0$ for $j>k+1$. Given a morphism of graded coaugmented coalgebras $F:T(V)\rh T(W)$ we shall similarly denote by $f_k:V^{\otimes k}\rh W$ the components of the corestriction $T(V)\xrightarrow{F}T(W)\xrightarrow{p}W$ and call them the Taylor coefficients of $F$: the morphism $F$ is determined by its Taylor coefficients according to (where again we denote by $F^j_k$ the composition $V^{\otimes k}\rh T(V)\xrightarrow{F}T(W)\rh W^{\otimes j} $) $F^0_0(1)=1$, $F^j_0=F^0_k=0$ for $j,k\geq1$,
\[ F^j_k(v_1\otimes\cdots\otimes v_k)=\sum_{i_1+\cdots+i_j=k}f_{i_1}(v_1\otimes\cdots\otimes v_{i_1})\otimes\cdots\otimes f_{i_j}(v_{k-i_j+1}\otimes\cdots\otimes v_k)\]
for $1\leq j\leq k$, where the sum is taken over all ordered partitions $i_1+\cdots+i_j=k$ with $j,i_1,\ldots,i_j\geq1$, and finally $F^j_k=0$ for $j>k$.

\begin{definition} An $A_\infty[1]$ algebra structure on a graded space $V$ is a DG-coalgebra structure on $T(V)$ vanishing at $1$, that is,  a degree one coderivation $Q\in\operatorname{Hoch}(V)$ such that $[Q,Q]=0$ and $q_0=0$. An $A_\infty[1]$ morphism between $A_\infty[1]$ algebras $(V,Q)$ and $(W,R)$ is a morphism of DG-coalgebras $F=(f_1\ldots,f_k,\ldots):(V,Q)\rh (W,R)$. A morphism $F$ is called strict if $f_k=0$ for every $k\geq2$. 
\end{definition}

\begin{remark}
It is well known that the above definition is equivalent to the condition  that the higher products on the suspension $V[-1]$, induced by the Taylor coefficients $q_k$ via the inverse d\'ecalage isomorphism $\Hom^*(V^{\otimes k},V)\rh \Hom^*(V[-1]^{\otimes k},V[-1])[1-k]$, define a strong homotopy associative algebra structure, also known as an $A_\infty$ algebra structure, on $V[-1]$. In particular, given a differential graded associative algebra $(A,d,\cdot)$ there is an induced $A_\infty[1]$ algebra structure on $A[1]$ with Taylor coefficients $q_1(s^{-1}a)=-s^{-1}da$, $q_2(s^{-1}a_1\otimes s^{-1}a_2)=(-1)^{|a_1|}s^{-1}(a_1\cdot a_2)$ and $q_k=0$ for $k\geq3$.
\end{remark}

We shall denote by $\CE(V)$ the graded Lie algebra of coderivations of the symmetric coalgebra $S(V)=\oplus_{k\geq0}V^{\odot k}$: again corestriction induces an isomorphism of graded vector spaces 
$\CE(V)\rh\Hom^*(S(V),V)$ and we call the components $q_k\colon V^{\odot k}\rh V$ of $Q\in\CE(V)$ under corestriction the Taylor coefficients of $Q$: they determine $Q$ according to $Q^1_0=q_0$, $Q^j_0=0$ for $j\neq1$, $Q^0_k=0$ for all $k$, 
\[ Q_k^j(v_1\odot\cdots\odot v_k)=\sum_{\sigma\in S(k-j+1,j-1)}\varepsilon(\sigma) q_{k-j+1}(v_{\sigma(1)}\odot\cdots\odot v_{\sigma(k-j+1)})\odot\cdots\odot v_{\sigma(k)},\]
where we denote by $S(p,q)$ the set of $(p,q)$ unshuffles and by $\varepsilon(\sigma)=\varepsilon(\sigma;v_1,\ldots,v_k)$ the Koszul sign. A morphism of graded coaugmented coalgebras $F:S(V)\rh S(W)$ is determined by its Taylor coefficients $f_k:V^{\odot k}\rh W$ according to $F^0_0(1)=1$, $F^j_0=F^0_k=0$ for $j,k\geq1$,
\[ F^j_k(v_1\odot\cdots\odot v_k)=\frac{1}{j!}\sum_{i_1+\cdots+i_j=k}\sum_{\sigma\in S(i_1,\ldots,i_j)}\varepsilon(\sigma)f_{i_1}(v_{\sigma(1)}\odot\cdots\odot v_{\sigma(i_1)})\odot\cdots\odot f_{i_j}(v_{\sigma(k-i_j+1)}\odot\cdots\odot v_{\sigma(k)})\]
for $1\leq j\leq k$ and $F^j_k=0$ for $j>k$.

\begin{definition} An $L_\infty[1]$ algebra structure on a graded vector space $V$ is a DG-coalgebra structure on $S(V)$ vanishing at $1$. An $L_\infty$ morphism between $L_\infty[1]$ algebras is a morphism of DG-coalgebras $F=(f_1\ldots,f_k,\ldots):(V,Q)\rh (W,R)$: it is called strict if $f_k=0$ for every $k\geq2$. 
\end{definition}

\begin{remark}
Again, the above definition is equivalent to say that the higher brackets on $V[-1]$ induced by the inverse d\'ecalage isomorphism $\Hom^*(V^{\odot k},V)\rh \Hom^*(V[-1]^{\wedge k},V[-1])[1-k]$ define a strong homotopy Lie algebra structure, also known as an $L_\infty$ algebra structure, on $V[-1]$: cf. \cite{LadaMarkl}. In particular, given a 
DG-Lie algebra $(L,d,[\cdot,\cdot])$ there is an induced $L_\infty[1]$ algebra structure on $L[1]$ with Taylor coefficients $q_1(s^{-1}l)=-s^{-1}dl$, $q_2(s^{-1}l_1\odot s^{-1}l_2)=(-1)^{|l_1|}s^{-1}[l_1,l_2]$ and $q_k=0$ for $k\geq3$.
\end{remark}

\begin{remark}\label{rem:symmetrization} There is a symmetrization functor $\operatorname{sym}\colon\mathbf{A_\infty}[1]\rh\mathbf{L_\infty}[1]$ from the category 
of $A_\infty[1]$ algebras to that of $L_\infty[1]$ algebras \cite{LadaMarkl}, generalizing the classical construction 
from the category of differential graded associative algebras to the category of differential graded Lie algebras: 
an $A_\infty[1]$ algebra $(V,q_1,\ldots,q_k,\ldots)$ is mapped to $(V,\operatorname{sym}(q_1),\ldots,\operatorname{sym}(q_k),\ldots)$,
\[\operatorname{sym}(q_k)(v_1\odot\cdots\odot v_k)=\sum_{\sigma\in S_k}\varepsilon(\sigma) q_k(v_{\sigma(1)}\otimes\cdots\otimes v_{\sigma(k)}),\]
and sending an $A_\infty[1]$ morphism $F=(f_1,\ldots,f_k,\ldots):(V,Q)\rh (W,R)$ to the $L_\infty[1]$ morphism $\operatorname{sym}(F)=(\operatorname{sym}(f_1),\ldots,\operatorname{sym}(f_k),\ldots):(V,\operatorname{sym}(Q))\rh(W,\operatorname{sym}(R))$,
\[\operatorname{sym}(f_k)(v_1\odot\cdots\odot v_k)=\sum_{\sigma\in S_k}\varepsilon(\sigma) f_k(v_{\sigma(1)}\otimes\cdots\otimes v_{\sigma(k)}).\]         \end{remark}

\begin{definition}
Given an $A_\infty[1]$ (resp.: $L_\infty[1]$) morphism $F=(f_1,\ldots,f_k,\ldots)\colon (V,Q)\to (W,R)$ then $f_1;(V,q_1)\to (W,r_1)$ is a morphism of complexes: we shall say that $F$ is a quasi-isomorphism, or equivalently a weak equivalence, if such is $f_1$.
\end{definition}

Most of our computations will be based on the following important and well known homotopy transfer theorem: for a proof we refer to \cite{FMcone,Hue2010} and references therein. 

\begin{theorem} Given an $A_\infty[1]$ (resp.: $L_\infty[1]$) algebra $(V,q_1,\ldots,q_k,\ldots)$, a differential graded vector space $(W,r_1)$ and a diagram 
\[ \xymatrix{(W, r_1)\ar@<.4ex>[r]^-{f_1}&(V, q_1)\ar@<.4ex>[l]^-{g_1}\ar@(ul,ur)[]^K} \]
where $f_1,g_1$ are morphisms of cochain complexes and $K$ is a homotopy such that 
$g_1f_1=Id$, $q_1K+Kq_1=f_1g_1-Id$, then 
there is an induced $A_\infty[1]$ (resp.: $L_\infty[1]$) structure $(W,r_1,\ldots,r_k,\ldots)$ on $W$ and an $A_\infty[1]$ (resp.: $L_\infty[1]$) quasi-isomorphism $F=(f_1,\ldots,f_k,\ldots)\colon W\to V$, defined by the recursions
\[ f_k=\sum_{j=2}^k Kq_jF^j_k,\quad\text{for}\quad k\geq2,\] 
\[ r_k =\sum_{j=2}^k g_1q_jF^j_k,\quad\text{for}\quad k\geq2.\]
Notice that $F^{j}_k$ only depends on $f_1,\ldots,f_{k-j+1}$. Moreover there exists  
an $A_\infty[1]$ (resp.: $L_\infty[1]$) quasi-isomorphism $G=(g_1,\ldots,g_k,\ldots)\colon V\to W$ such that $GF$ is the identity.
In the $A_\infty[1]$ case the Taylor coefficients of the morphism $G$ may be defined by the recursive formulas
\[ g_k = \sum_{j=1}^{k-1}g_jQ_k^{j}K_k,\]
where $K_k:=\sum_{i=0}^{k-1}\id^{\otimes i}\otimes K\otimes (f_1g_1)^{k-i+1}\colon V^{\otimes k}\to V^{\otimes k}$.
\end{theorem}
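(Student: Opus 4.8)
The plan is to reduce the statement to the basic perturbation lemma, applied via the classical \emph{tensor trick} on the tensor (resp.\ symmetric) coalgebra. I treat the $A_\infty[1]$ case explicitly; the $L_\infty[1]$ case then follows either by applying the symmetrization functor of Remark~\ref{rem:symmetrization} or by running the same argument with $S(-)$, $\CE(-)$ in place of $T(-)$, $\operatorname{Hoch}(-)$, at the only cost of carrying along the Koszul signs $\varepsilon(\sigma)$. First I would split the codifferential $Q\in\operatorname{Hoch}(V)$ as $Q=\partial+\mathcal Q$, where $\partial$ is the coderivation with Taylor coefficients $(0,q_1,0,0,\dots)$ and $\mathcal Q$ the one with Taylor coefficients $(0,0,q_2,q_3,\dots)$; then $\partial^2=0$ because $q_1^2=0$, and expanding $Q^2=0$ exhibits $\mathcal Q$ as a perturbation of $\partial$. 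One also records at once that $r_1:=g_1q_1f_1$ makes $f_1$ and $g_1$ into morphisms of complexes (from $g_1f_1=\id$ and $q_1K+Kq_1=f_1g_1-\id$), and that they are quasi-isomorphisms since they belong to a contraction.

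Next I would lift the contraction to the tensor coalgebras. Let $F_1=\bigoplus_kf_1^{\otimes k}\colon T(W)\rh T(V)$ and $G_1=\bigoplus_kg_1^{\otimes k}\colon T(V)\rh T(W)$ be the coalgebra morphisms with linear Taylor coefficients $f_1,g_1$ and all higher ones zero, let $\partial,\partial'$ be the coderivations of $T(V),T(W)$ extending $q_1,r_1$, and let $\mathbf K\colon T(V)\rh T(V)$ act on $V^{\otimes k}$ by $\sum_{i=0}^{k-1}\id^{\otimes i}\otimes K\otimes(f_1g_1)^{\otimes(k-1-i)}$, which on $V^{\otimes k}$ is the operator $K_k$ of the statement. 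A direct check gives $G_1F_1=\id$ and $\partial\mathbf K+\mathbf K\partial=F_1G_1-\id$, so $(F_1,G_1,\mathbf K)$ is a contraction of $(T(V),\partial)$ onto $(T(W),\partial')$. Since every $q_m$ with $m\geq2$ strictly lowers the tensor length, $\mathcal Q$ maps $V^{\otimes k}$ into $\bigoplus_{j\leq k-1}V^{\otimes j}$ while $\mathbf K$ preserves length; hence $\mathbf K\mathcal Q$ is locally nilpotent and the basic perturbation lemma applies to the perturbation $\mathcal Q$ of $\partial$, producing a contraction of $(T(V),Q)$ onto $(T(W),R)$ with
\[ R=\partial'+G_1\mathcal Q F_\infty,\qquad F_\infty=(\id-\mathbf K\mathcal Q)^{-1}F_1=F_1+\mathbf K\mathcal Q F_\infty,\qquad G_\infty=G_1(\id-\mathcal Q\mathbf K)^{-1}=G_1+G_\infty\mathcal Q\mathbf K, \]
and with $G_\infty F_\infty=\id$.

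It then remains to read off Taylor coefficients. The corestriction $p$ satisfies $p\mathbf K=K$ on $V^{\otimes1}$ and vanishes on $V^{\otimes k}$ for $k\geq2$, while $pF_1$ and $pG_1$ equal $f_1$, $g_1$ on the length-one pieces and vanish elsewhere; corestricting the three identities above and using that $\mathcal Q$ and $Q$ agree in components $(-)^j_k$ with $j\leq k-1$ turns them precisely into
\[ f_k=\sum_{j=2}^{k}Kq_jF^j_k,\qquad r_k=\sum_{j=2}^{k}g_1q_jF^j_k,\qquad g_k=\sum_{j=1}^{k-1}g_jQ^j_kK_k\qquad(k\geq2), \]
which are legitimate recursions because $F^j_k$ depends only on $f_1,\dots,f_{k-j+1}$. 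In particular $R^2=0$ says the $r_k$ define an $A_\infty[1]$ structure on $W$, and $F_\infty$, $G_\infty$ are the asserted $A_\infty[1]$ quasi-isomorphisms with $G_\infty F_\infty=\id$ (quasi-isomorphisms because their linear parts $f_1$, $g_1$ are).

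The genuinely non-formal point — which I expect to be the main obstacle — is that the basic perturbation lemma by itself only guarantees $F_\infty,G_\infty$ to be morphisms of complexes and $R$ a differential, whereas we need $F_\infty,G_\infty$ to be \emph{coalgebra} morphisms and $R$ a \emph{coderivation}, so that the formulas above really describe $A_\infty[1]$ data. This is where the special shape of $\mathbf K$ is essential: it is built so as to be comultiplicative in the appropriate relative sense (concretely, $\Delta\mathbf K=(\mathbf K\otimes F_1G_1+\id\otimes\mathbf K)\Delta$), and this, together with $F_1,G_1$ being coalgebra morphisms, $\partial,\partial'$ coderivations, and the strict compatibility of $\Delta$ with the length filtration, is exactly what propagates the coalgebra structure through the geometric series $(\id-\mathbf K\mathcal Q)^{-1}$ — this is the coalgebra-enhanced perturbation lemma of Gugenheim--Lambe--Stasheff and Huebschmann--Kadeishvili. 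An equivalent but more hands-on route, since $T(W)$ is cofree, is to verify directly that the universal formulas above, organized as sums over planar rooted trees with the $q_m$ at internal vertices, $K$ along internal edges, $f_1$ at the leaves and $g_1$ at the root (for the $r_k$), satisfy $R^2=0$ and $F_\infty R=QF_\infty$, the cancellations being produced by $Q^2=0$ and the contraction identities via tree surgery; this route is also the most transparent one in the absence of the usual side conditions on $K$. For $L_\infty[1]$ one finally applies symmetrization, or runs the whole argument over $S(-)$; no new ideas are required.
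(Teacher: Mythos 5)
Your route --- split the codifferential as $Q=\partial+\mathcal Q$ with $\partial$ the part extending $q_1$, lift the contraction to the tensor coalgebra via $F_1=\oplus f_1^{\otimes k}$, $G_1=\oplus g_1^{\otimes k}$ and the homotopy $\mathbf K$ acting on $V^{\otimes k}$ as $K_k$, apply the basic perturbation lemma to the locally nilpotent perturbation, and read off Taylor coefficients by corestriction --- is precisely the proof in the sources the paper cites for this theorem (\cite{FMcone,Hue2010} and, for the coalgebra enhancement, \cite{HK}); the paper itself offers no proof. Your identification of the coalgebra compatibility as the genuinely non-formal point is right, the relation $\Delta\mathbf K=(\mathbf K\otimes F_1G_1+\id\otimes\mathbf K)\Delta$ is correct, the lifted homotopy identity $\partial\mathbf K+\mathbf K\partial=F_1G_1-\id$ does hold (the cross terms cancel because $f_1g_1$ commutes with $q_1$), and you have silently corrected the exponent in the statement's formula for $K_k$, which should read $(f_1g_1)^{\otimes(k-i-1)}$.

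There is, however, one genuine gap: the assertion that the perturbation lemma automatically delivers $G_\infty F_\infty=\id$. Under only the stated hypotheses $g_1f_1=\id$ and $q_1K+Kq_1=f_1g_1-\id$ this fails; already in tensor length two one computes
\[ (GF)^1_2 \;=\; g_1Kq_2(f_1\otimes f_1)+g_1q_2(Kf_1\otimes f_1)+g_1q_2(f_1\otimes Kf_1)\,, \]
which has no reason to vanish (take $q_1=0$, $W=V$, $f_1=g_1=\id$ and $K$ an arbitrary degree $-1$ map). What is needed are the side conditions $Kf_1=0$, $g_1K=0$, $K^2=0$: these are exactly what give $\mathbf K F_1=G_1\mathbf K=\mathbf K^2=0$ on $T(V)$, and then the perturbation lemma does yield $G_\infty F_\infty=G_1F_1=\id$ (and, in the coalgebra-enhanced version, that $F_\infty$, $G_\infty$ are coalgebra morphisms and $R$ a coderivation). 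You should either add the side conditions to the hypotheses --- they can always be arranged by the standard modification of $K$, at the cost of changing the operator appearing in the formulas --- or observe that they hold in every contraction actually used in the paper: the Dolbeault contraction satisfies $h\imath=\pi h=h^2=0$, and the side conditions for the contractions of Theorem~\ref{thm.derivedproducts} and Proposition~\ref{prop.transfertoharmonic} are verified explicitly in the text. The transferred structure $(r_k)$ and the morphism $F$ are unaffected by this issue and your argument for them is complete.
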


\begin{remark}\label{rem:sym} It is useful to notice that 
homotopy transfer is compatible in the obvious sense with the symmetrization functor $\operatorname{sym}:\mathbf{A_\infty}[1]\rh\mathbf{L_\infty}[1]$ of Remark~\ref{rem:symmetrization}. 
In the $L_\infty[1]$ case it is also possible to give an explicit description of the quasi-isomorphism $G$, although via recursive formulas more complicated than in the  $A_\infty[1]$ case, cf. \cite{berglund,prelieDT}.
\end{remark}

We have already defined the category $\Art$  of local Artin $\C$-algebras with residue field $\C$; in addition we shall denote by $\Set$ the category of sets and  by 
$\mathbf{Grp}$ the category of groups. 

For a Lie algebra $L^0$, its exponential functor is 
\[ \exp_{L^0}\colon \Art\to \mathbf{Grp},\qquad \exp_{L^0}(B)=\exp(L^0\otimes\mathfrak{m}_B)\;.\]

Given a DG-Lie algebra $L$, its Maurer-Cartan functor is
\[ \MC_L\colon \Art\to\Set,\qquad \MC_L(B)=\{x\in L^1\otimes\mathfrak{m}_B\mid dx+\frac{1}{2}[x,x]=0\}.\]
The (left) gauge action of $\exp_{L^0}$ on $\MC_L$ may be written as
\[ e^a\ast x=x+\sum_{n\ge 0}\frac{[a,-]^n}{(n+1)!}([a,x]-da)\]
and the corresponding quotient is called the deformation functor associated to $L$: 
\[ \Def_L=\frac{\MC_L}{\;\exp_{L^0}\;}.\]

The basic theorem of deformation theory asserts that every quasi-isomorphism $L\to M$ of DG-Lie algebras induces an isomorphism of functors $\Def_L\to \Def_M$.
In one of its simplest interpretations, the basic theorem of derived deformation theory asserts that in characteristic 0 every  ``deformation problem'' is described by the functor $\Def_L$ for a suitable DG-Lie algebra $L$ 
determined up to homotopy; similarly every ``morphism of deformation theories'' is induced by a morphism in the homotopy category of DG-Lie algebras: here enters in the game also $L_{\infty}$-algebras as a fundamental tool, 
every morphism in the homotopy category of DG-Lie algebras can be represented by a direct  
$L_{\infty}$ morphism.   

The construction of the functor $\Def_L$ extends, although in a non trivial way, also to $L_{\infty}[1]$ algebras.
First notice that if $V=(V,q_1,q_2,\ldots)$ is an 
$L_{\infty}[1]$ algebra and $(C,d)$ is a commutative DG-algebra, then the  natural extensions   
\[ q_1\colon V\otimes C\to V\otimes C,\qquad q_1(v\otimes c)=q_1(v)\otimes c+(-1)^{|v|}v\otimes dc,\]
\[ q_2\colon (V\otimes C)^{\odot 2}\to V\otimes C,\qquad q_2(v_1\otimes c_1,v_2\otimes c_2)=
(-1)^{|c_1|\,|v_2|}q_2(v_1,v_2)\otimes c_1c_2,\]
\[ q_k\colon (V\otimes C)^{\odot k}\to V\otimes C,\qquad q_k(\odot_{i=1}^kv_i\otimes c_i)=
\pm q_k(v_1\odot\cdots\odot v_k)\otimes c_1\cdots c_k,\]
gives an $L_{\infty}[1]$ algebra structure on $V\otimes C$.
Then the Maurer-Cartan functor is defined as
\[ \MC_V\colon \Art\to \Set,\qquad   \MC_V(B)=\left\{x\in V^0\otimes\mathfrak{m}_B\;\middle|\; \sum_{n>0}\frac{1}{n!}q_n(x^{\odot n})\right\}\;,\]
while the gauge action is replaced by the homotopy equivalence: two Maurer-Cartan elements $x,y\in \MC_V(B)$ are said to be homotopy equivalent if there exists $z(t)\in \MC_{V[t,dt]}(B)$ such that $z(0)=x$ and $z(1)=y$;
here $V[t,dt]:=V\otimes \C[t,dt]$  and $\C[t,dt]$ is  the de DG-algebra of polynomial differential forms on the affine line.
The functor $\Def_V$ is defined as the quotient of $\MC_V$ by homotopy equivalence.

For a DG-Lie algebra $L$ the above definition of $\Def_L$ coincides with the classical one. We also remark that every $L_{\infty}$-morphism $V\to W$ induces natural transformations $\MC_V\to \MC_W$, $\Def_V\to\Def_W$, cf. \cite{ManRendiconti,SS1}.

\bigskip
\section{The Fiorenza-Manetti model for the local period maps}
\label{sec:FM}

The starting point of this paper is given by the results of \cite{FMperiods}, some of them we have already used in 
Proposition~\ref{prop.periodgeometric}. Apart from the technicalities, the main contribution of \cite{FMperiods} 
is the fact that the ``correct'' period domain for a compact K\"{a}hler manifold $X$ is not a subset of the product of 
the Grassmannians $\prod_i\Grass(H^i(X,\C))$ but the Grassmannian of the entire de Rham complex $(A^{*,*}_X,d)$, 
which is a completely different object in the framework of derived geometry and carries a richer algebro-geometric structure, cf. \cite{Carmelo1,Carmelo2}.

Given a  vector space $V$ and a vector subspace $F\subset V$, the Grassmann functor of the pair $(V,F)$ is 
\[ G_{V,F}\colon \Art\to \Set,\quad 
G_{V,F}(B)=\{\sF\mid \sF\subset V\otimes B\; \text{flat $B$-submodule},\; 
\sF\otimes_B\K=F\}.\]
If $V$ is finite dimensional,   the tautological bundle on the Grassmannian is a universal family for the above functor and then 
$G_{V,F}(B)$ is identified with the set of morphism of pointed schemes $(\Spec(B),0)\to (\Grass(V),F)$.

If $V$ is a complex of vector spaces and $F\subset V$ is a subcomplex we can define the functor $G_{V,F}$ as 
\[ \operatorname{G}_{V,F}(B)=\frac{\left\{\begin{array}{l}
\text{subcomplexes of flat $B$-modules }
\text{$\sF\subset V\otimes B$ such that $\sF\otimes_B\K=F$} \end{array}\right\}}
{\left\{\begin{array}{l}
\text{$B$-linear automorphisms of the complex $V\otimes B$ lifting the identity }\\
\text{on $V$  and  inducing the identity in cohomology} \end{array}\right\}}
\]

The main reason of taking quotient for the automorphisms which are homotopy equivalent to the identity is that in this way 
the functor $G_{V,F}$ is homotopy invariant; this means that 
every morphism of pairs 
$\alpha\colon (V,F)\to (W,H)$ such that both $\alpha\colon V\to W$ and $\alpha\colon F\to H$ are quasi-isomorphisms, induces an isomorphism of functors $G_{V,F}\simeq G_{W,H}$. This fact is essentially  proved in \cite{FMperiods} (especially in the ArXiv versions); a more detailed study will appear in the forthcoming paper \cite{FMAJ}.
A first consequence of the homotopy invariance is that if $H^*(F)\to H^*(V)$ is injective, then the natural transformation 
\[ G_{V,F}\to G_{H^*(V),H^*(F)},\qquad \sF\mapsto H^*(\sF),\]
is well defined and is an isomorphism of functors: to see this it is sufficient to choose a set  $H\subset V$ of harmonic representatives for the cohomology of $V$ such that $H\cap F\to F$ is a quasi-isomorphism and apply homotopy invariance to the morphism of pairs $(H,H\cap F)\to (V,F)$.

There exist several equivalent models of  $L_{\infty}$ algebras governing the functor $G_{V,F}$; in this section we explain the one  usually called Fiorenza-Manetti mapping cone, which is very convenient for the algebraic 
description of period maps. Another model, much more convenient for the goal of this paper, will be described in next sections. To this end we need to start by recalling some basic stuff about homotopy fibers of morphisms of differential graded Lie algebras.

Given a morphism of DG-Lie algebras 
$f\colon L\to M$ one can define the analog of Maurer-Cartan functor: 
\[ \MC_f(A)=\left\{(l,e^m)\in L^1\otimes\mathfrak{m}_A\times \exp(M^0\otimes\mathfrak{m}_A)\;\middle|\; 
dl+\frac{1}{2}[l,l]=0,\; e^m\ast f(l)=0\right\}.\]
The gauge action on $\MC_L$ lifts to a (left) action of the group functor $\exp_{L^0}\times\exp_{dM^{-1}}$ 
on $\MC_f$ by setting
\[ (e^a,e^{du})\ast (l,e^m)=(e^a\ast l,\, e^{du}e^me^{-f(a)}),\qquad (l,u)\in (L^0\oplus M^{-1})\otimes\mathfrak{m}_A.\]
 
 \begin{lemma}[{\cite[Thm. 6.14]{ManettiSemireg}}]\label{lem.modellopiccolofibraomotopica}
Given a 
morphism of differential graded Lie algebras $f\colon L\to M$,  the functor of Artin rings  
\[ \Def_f=\frac{\MC_f}{\;\exp_{L^0}\times\exp_{dM^{-1}}\;}\;.\]
is  isomorphic to the deformation functor of the homotopy fiber of $f$. This means that
every commutative diagram of differential graded Lie algebras
\[ \xymatrix{L\ar[d]^f\ar[r]^\alpha&P\ar[d]^q&\ker(q)\ar[l]_i\ar[d]^h\\
M\ar[r]^{\beta}&Q&0\ar[l]}\]
with $q$ surjective, $\alpha,\beta$ quasi-isomorphisms and $i$ the inclusion, 
induces canonically two isomorphisms of functors
\[ \Def_f\xrightarrow{\;\cong\;}\Def_q\xleftarrow{\;\cong\;}\Def_h=\Def_{\ker(q)}\;.\]
\end{lemma}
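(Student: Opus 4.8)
The plan is to reduce the statement to a known comparison result for deformation functors of DG-Lie algebras by interpreting everything in terms of the mapping cone. First I would recall that, given the morphism $f\colon L\to M$, one has the Fiorenza--Manetti mapping cone DG-Lie algebra (or rather a suitable $L_{\infty}$-model) whose deformation functor is $\Def_f$ as defined above; this is precisely the content of the cited \cite[Thm.~6.14]{ManettiSemireg}, which already establishes that $\Def_f$ is the deformation functor of the homotopy fiber of $f$ constructed by any of the standard recipes (e.g. $L\times_M M^{I}$ where $M^I$ is a path object for $M$, or the mapping cone construction). So the genuinely new content to verify is only that the \emph{particular} homotopy-fiber presentation via a surjective replacement $q\colon P\to Q$ of $\beta\colon M\to Q$, together with a quasi-isomorphism $\alpha\colon L\to P$ making the square commute, computes the same functor, canonically.

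The key steps, in order, are as follows. \textbf{Step 1.} Observe that since $q$ is surjective with kernel $\ker(q)$, the honest fiber product $L\times_{Q}P$ (along $\beta q$ and, say, the composite $L\to M\xrightarrow{\beta}Q$) is already a homotopy pullback in the category of fibrant objects; hence $\Def_{\ker(q)}$ is the deformation functor of the homotopy fiber of $q$ over $0$, and the long exact sequences in tangent and obstruction spaces identify $\Def_{\ker(q)}\cong\Def_q$ (here $\Def_q$ is defined by the same $\MC_q/(\exp_{P^0}\times\exp_{dQ^{-1}})$ recipe as $\Def_f$, now applied to $q$). This is a direct unwinding of definitions plus the standard obstruction-theoretic argument that a quasi-isomorphism of the relevant mapping-cone-type DG-Lie algebras induces an isomorphism of deformation functors. \textbf{Step 2.} Use the quasi-isomorphisms $\alpha\colon L\to P$ and $\beta\colon M\to Q$ to build a morphism of ``pairs of DG-Lie algebras'' $(L\xrightarrow{f}M)\to(P\xrightarrow{q}Q)$, i.e. a commuting square, and note that it is a componentwise quasi-isomorphism. \textbf{Step 3.} Invoke the functoriality and quasi-isomorphism invariance of the construction $f\mapsto\Def_f$: a commutative square of DG-Lie algebras with both horizontal-to-horizontal maps quasi-isomorphisms induces an isomorphism $\Def_f\xrightarrow{\cong}\Def_q$. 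This is again \cite[Thm.~6.14]{ManettiSemireg} (or its proof), applied to the mapping cones: the induced map on mapping cones $C(f)\to C(q)$ is a quasi-isomorphism of DG-Lie algebras, whence an isomorphism on $\Def$. Composing the isomorphisms of Steps 1 and 3 yields the desired chain $\Def_f\xrightarrow{\cong}\Def_q\xleftarrow{\cong}\Def_{\ker(q)}$, and the fact that the maps are canonical follows because at each stage the isomorphism is the one induced by the evident morphism of functors $\MC_{(-)}\to\MC_{(-)}$ on Maurer--Cartan sets, compatible with the respective group actions.

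The main obstacle I expect is Step 1, namely checking carefully that the ad hoc functor $\Def_q=\MC_q/(\exp_{P^0}\times\exp_{dQ^{-1}})$, built from the surjection $q$, really does coincide with $\Def_{\ker(q)}$ and not merely with something abstractly weakly equivalent to it. The subtlety is in matching the gauge groups: an element of $\MC_{\ker(q)}$ is just a Maurer--Cartan element $x\in(\ker q)^1\otimes\mathfrak m_A$ modulo $\exp((\ker q)^0\otimes\mathfrak m_A)$, whereas an element of $\MC_q$ carries the extra data of a gauge $e^m\in\exp(Q^0\otimes\mathfrak m_A)$ with $e^m\ast q(x)=0$; one must show that, because $q$ is surjective (so $q\colon P^0\to Q^0$ is surjective and $Q$-gauges lift), the extra data is contractible and the two quotients agree. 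This is where one genuinely uses surjectivity of $q$ as opposed to mere quasi-surjectivity, and it is the one place where a short but non-formal argument (a smoothness/obstruction computation for the forgetful map $\MC_q\to\MC_{\ker q}$, or equivalently an explicit gauge-fixing) is required. Everything else is formal manipulation of the mapping-cone construction and invocation of results already cited in the excerpt.
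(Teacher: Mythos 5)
The paper offers no internal proof of this lemma: it is imported verbatim from \cite[Thm.~6.14]{ManettiSemireg}, so the only thing to compare against is the standard argument, and your outline is that argument and is correct in substance. You rightly isolate the two genuine points: (a) the componentwise quasi-isomorphism $(\alpha,\beta)$ of pairs induces an isomorphism $\Def_f\cong\Def_q$, and (b) for the surjection $q$ the auxiliary gauge datum $e^m$ can be eliminated by lifting $m$ along $q\colon P^0\to Q^0$ and gauging, which identifies $\Def_q$ with $\Def_{\ker q}$; and you correctly flag (b) as the one place where surjectivity (rather than quasi-surjectivity) is indispensable. Three small corrections. First, the induced map $\cocone(f)\to\cocone(q)$ is a strict quasi-isomorphism of $L_\infty$-algebras, not of DG-Lie algebras: the mapping cocone of a DG-Lie morphism carries only the $L_\infty$ structure \eqref{brcocone}, so the invariance you invoke is the $L_\infty$ version of the basic theorem, combined with Lemma~\ref{lem.conevshofib} to identify $\Def_{\cocone(f)}$ with $\Def_f$. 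Second, the fiber product relevant to Step~1 is $0\times_Q P=\ker(q)$, not $L\times_Q P$; the latter plays no role there. Third, for injectivity of $\Def_{\ker q}\to\Def_q$ the gauge-fixing is not quite automatic: given $e^a\ast x=y$ with $q(a)=du$, one must replace $a$ by $\operatorname{BCH}(a,-(dv+[v,x]))$ for a lift $v$ of $u$, using that $e^{dv+[v,x]}$ stabilizes the Maurer--Cartan element $x$, to land in $\exp(\ker(q)^0\otimes\mathfrak m_A)$; this is routine but is exactly the ``short non-formal argument'' you anticipate, so it should be written out.
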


Notice that, when $f\colon L\to M$ is an injective morphism of DG-Lie algebras, the Maurer-Cartan functor admits the simpler description 
\[ \MC_f(A)=\{e^m\in \exp(M^0\otimes\mathfrak{m}_A)\mid 
e^{-m}\ast 0\in f(L^1)\otimes \mathfrak{m}_A\}.\]
On the other side, when $M=0$ the functors $\MC_f$ and $\Def_f$ reduces to the usual definitions of $\MC_L$ and $\Def_L$ respectively.

\begin{remark} The importance of homotopy fibres in deformation theory has been clarified in several places, see e.g. \cite{FMcone,FMAJ,ManettiSemireg}, since they are the right  object
for the study of  semitrivialized deformation problems.

We shall talk about semitrivialized deformation problems when
we consider deformations of a geometric object together with a trivialization of
the deformation of a specific part of it. For instance, in this class we have Grassmann functors and more generally  
embedded deformations
of a subvariety $Z$ of a complex manifold $X$: such deformations (over a base $B$) can
be considered as deformations  $\mathcal{Z}\subset \mathcal{X}$  of the pairs $Z\subset X$ equipped with  a trivialization $\mathcal{X}\simeq X\times B$.

Keeping in mind that \emph{deformations = solutions of Maurer-Cartan
equation} and \emph{trivializations = gauge equivalences}, it becomes natural 
that, according to Lemma~\ref{lem.modellopiccolofibraomotopica}, every
semitrivialized deformation problem is governed by a functor $\Def_f$ for a suitable 
morphism of differential graded Lie algebras $f\colon L\to M$.
\end{remark}

\begin{example}\label{ex.grassmannianecomplessi}

Let $(V,d)$ be a complex of  vector spaces and $F\subset V$ a subcomplex. Then we have an inclusion morphism of DG-Lie algebras $\chi\colon \End^*(V;F)\to \End^*(V)=\Hom^*(V,V)$, where
\[  \End^*(V;F)=\{f\in \End^*(V)\mid f(F)\subset F\}.\]
Given $A\in \Art$ and $m\in \Hom^0(V,V)\otimes\mathfrak{m}_A$, from the formula 
$e^{-m}de^{m}=d+e^{-m}\ast 0\,$ it follows immediately that that 
$e^m\in \MC_{\chi}(A)$ if and only if $e^{m}(F\otimes A)$ is a subcomplex of $V\otimes A$.
If $e^m,e^n\in \MC_{\chi}(A)$ are gauge equivalent, i.e., if 
$e^n=e^{du}e^me^{-\chi(a)}$ then $e^{du}\colon V\otimes A\to V\otimes A$ is a morphism of complexes homotopic to the identity and, since $e^{n}(F\otimes A)=e^{du}e^{m}(F\otimes A)$,  the two maps 
\[  H^*(e^{n}(F\otimes A))\to H^*(V\otimes A),\qquad H^*(e^{m}(F\otimes A))\to H^*(V\otimes A),\]
have the same image. 
Thus we have defined a natural transformation 
\[ \Def_{\chi}\to G_{V,F}\]
which, by the results of \cite{FMperiods} is an isomorphism of functors. 
\end{example}

One of the main results of \cite{FMcone} is the concrete description of an 
$L_{\infty}$-algebra  $C(f)$ representing the homotopy fiber of $f\colon L\to M$, together with a natural 
isomorphism of functors $\MC_f\simeq \MC_{C(f)}$.  The underlying complex is the 
mapping cocone of $f$ in the category of complexes, i.e.,  $C(f)=L\times M[-1]$ and therefore $C(f)[1]=L[1]\times M$
equipped with the differential 
\[q_1(s^{-1}l,m)=(-s^{-1}dl,dm-f(l))\,.\]
The only non trivial contributions to the higher brackets $q_k\colon C(f)[1]^{\odot k}\to C(f)[1]$, $k>1$, are
\[ q_2\colon L[1]^{\odot 2}\to L[1],\qquad q_{k+1}\colon L[1]\otimes M^{\odot k}\to M,\quad k\ge 1,\]
defined by the formulas
\begin{equation}\label{brcocone}
\begin{split}
q_2(s^{-1}l_1\odot s^{-1}l_2)&=(-1)^{|l_1|}s^{-1}[l_1,l_2],\\
q_{k+1}(s^{-1}l\otimes m_1\odot\cdots\odot m_k)&=-\frac{B_k}{k!}\sum_{\sigma\in S_k}\varepsilon(\sigma)[\cdots[[f(l),m_{\sigma(1)}],m_{\sigma(2)}]\cdots,m_{\sigma(k)}],\end{split}
\end{equation}
where $\varepsilon(\sigma)$ is the Koszul sign and 
$B_0,B_1,\ldots$ are the Bernoulli numbers:
\[ \frac{t}{e^t-1}=\sum_{k\ge 0}\frac{B_k}{k!}t^k=1-\frac{t}{2}+\frac{t^2}{12}-\frac{t^4}{720}+\frac{t^6}{30240}-
\frac{t^8}{1209600}+\cdots\;.\]

\begin{lemma}[{\cite[Thm. 7.5]{FMcone}}]\label{lem.conevshofib} 
In the above notation, $C(f)$ is weak equivalent (as $L_{\infty}$-algebra) to  the 
homotopy fibre of $f\colon L\to M$. 
For every $B\in \Art$ we have 
\begin{equation}\label{equ.ledueMC}
(x,m)\in \MC_{C(f)}(B)\iff (x,e^m)\in \MC_f(B)\;.\end{equation} 
Moreover  
$(x_1,m_1),(x_2,m_2)\in \MC_{C(f)}(B)$ are homotopy equivalent if and only if 
$(x_1,e^{m_1}),(x_2,e^{m_2})\in \MC_{f}(B)$ are gauge equivalent; in other words the isomorphism \eqref{equ.ledueMC} 
induces an isomorphism of functors $\Def_{C(f)}=\Def_f$.
\end{lemma}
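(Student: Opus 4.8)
The plan is to deduce the first assertion from the homotopy transfer theorem applied to an explicit path-space model of the homotopy fibre, and then to establish \eqref{equ.ledueMC} together with the statement on equivalences by a direct expansion of the Maurer--Cartan equation of $C(f)$; the latter is genuinely needed, since $\MC_{C(f)}$ is not determined by the weak-homotopy type of $C(f)$ alone.

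\emph{A path-space model.} Let $\C[t,dt]$ be the DG-algebra of polynomial forms on the affine line, put $M[t,dt]=M\otimes\C[t,dt]$ with evaluations $\mathrm{ev}_0,\mathrm{ev}_1\colon M[t,dt]\to M$, and form the pull-back DG-Lie algebra $P=L\times_{M}M[t,dt]=\{(l,\omega)\mid f(l)=\mathrm{ev}_0\omega\}$, with the quasi-isomorphism $\alpha\colon L\to P$, $l\mapsto(l,f(l))$ (constant path), and the surjective DG-Lie morphism $q\colon P\to M$, $(l,\omega)\mapsto\mathrm{ev}_1\omega$. Applying Lemma~\ref{lem.modellopiccolofibraomotopica} with $\beta=\mathrm{id}_M$ gives $\Def_f\cong\Def_q\cong\Def_{\ker q}$, where $\ker q=\{(l,\omega)\mid f(l)=\mathrm{ev}_0\omega,\ \mathrm{ev}_1\omega=0\}$ is the homotopy fibre of $f$.

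\emph{The $L_\infty$ weak equivalence.} The complex underlying $C(f)=L\times M[-1]$ is a strong deformation retract of $\ker q$: the projection sends $(l,m(t)+n(t)\,dt)$ to $(l,\int_0^1 n)$, the embedding uses a fixed polynomial primitive, and the contracting homotopy is built from the integration operator $p(t)\,dt\mapsto\int_0^t p$ on $\C[t,dt]$. Feeding this contraction into the homotopy transfer theorem equips $C(f)$ with an $L_\infty$ structure and automatically produces an $L_\infty$ quasi-isomorphism $C(f)\to\ker q$, which proves the first assertion \emph{provided} one checks that the transferred Taylor coefficients coincide with the Bernoulli-number formulas \eqref{brcocone}. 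The transfer recursion is a sum over rooted trees: the unique binary tree reproduces the bracket of $L$ on the $L[1]$-summand, while a tree with one leaf in $L[1]$ and $k$ leaves in $M$ contributes the iterated bracket $[\cdots[[f(l),m_{\sigma(1)}],m_{\sigma(2)}]\cdots,m_{\sigma(k)}]$ with a rational coefficient obtained by integrating a product of linear functions over a simplex; summing these coefficients over all tree shapes with $k+1$ leaves yields exactly $-B_k/k!$. \textbf{Carrying out this combinatorial identification of the Bernoulli numbers is the main obstacle}: it is the same generating-function phenomenon, governed by $t/(e^t-1)=\sum_k B_k t^k/k!$, that produces the non-linear corrections in the gauge-action formula recalled in Section~\ref{sec:ooalgebras}, and it is the technical core of \cite{FMcone}.

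\emph{The Maurer--Cartan bijection.} Let $B\in\Art$ and $(x,m)\in C(f)^1\otimes\mathfrak{m}_B$, so that $x\in L^1\otimes\mathfrak{m}_B$ and $m\in M^0\otimes\mathfrak{m}_B$. Expanding $\sum_{n\ge1}\frac1{n!}q_n\big((s^{-1}x,m)^{\odot n}\big)$ with \eqref{brcocone}, and using that no bracket occurring there involves more than one $L[1]$-entry nor any with only $M$-entries, the $L[1]$-component of the equation reduces to $dx+\frac12[x,x]=0$, i.e.\ $x\in\MC_L(B)$, while the $M$-component becomes
\[ dm-\sum_{k\ge0}\frac{B_k}{k!}\,\ad_m^k f(x)=dm-\frac{\ad_m}{e^{\ad_m}-1}f(x).\]
Comparing with the explicit expression for $e^m\ast-$, the vanishing of this last term is precisely the condition $e^m\ast f(x)=0$ (up to the sign conventions of Section~\ref{sec:ooalgebras}); hence $(x,m)\in\MC_{C(f)}(B)\iff(x,e^m)\in\MC_f(B)$, and the bijection is visibly natural in $B$.

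\emph{Conclusion.} Finally, a homotopy in $\MC_{C(f)}$ between $(x_1,m_1)$ and $(x_2,m_2)$ is a polynomial path in $\MC_{C(f)\otimes\C[t,dt]}(B)$ with the prescribed endpoints; transporting it through the $\C[t,dt]$-linear extension of the bijection just established yields a polynomial path in $\MC_f(B)$ from $(x_1,e^{m_1})$ to $(x_2,e^{m_2})$, and conversely. Since for a morphism of DG-Lie algebras being joined by such a path is the same as lying in one $\exp_{L^0}\times\exp_{dM^{-1}}$-orbit — the standard identification of gauge equivalence with polynomial homotopy — homotopy equivalence in $\MC_{C(f)}$ corresponds exactly to gauge equivalence in $\MC_f$, and passing to quotients gives the isomorphism of functors $\Def_{C(f)}=\Def_f$.
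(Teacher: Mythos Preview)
Your proposal is correct and follows essentially the same route as the paper: the weak equivalence is obtained exactly as in \cite{FMcone} (and as the paper recalls in Section~\ref{sec:homfiber}) by homotopy transfer from the path-space model $K_f$ along Dupont's contraction, and the Maurer--Cartan bijection is the same generating-function computation the paper reproduces verbatim after the lemma. The only discrepancy is a sign: the $M$-component reads $dm-\dfrac{[-m,-]}{e^{[-m,-]}-1}f(x)$, i.e.\ with $-\ad_m$ rather than $\ad_m$, so that inverting the power series gives precisely $f(x)=e^{-m}\ast 0$; your ``up to sign conventions'' hedge is hiding exactly this, and it is worth writing out once correctly.
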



For a complex manifold $X$, the functor $\Def_X\colon \Art\to \Set$ of its infinitesimal 
deformations is controlled by the Kodaira-Spencer DG-Lie algebra 
\[ KS_X=(A^{0,*}_X(\Theta_X),-\debar,[-,-])\;,\]
while the deformations of its  de Rham complex  are controlled by the DG-Lie algebra
$\End^*(A^{*,*}_X)=\Hom^*(A^{*,*}_X,A^{*,*}_X)$ and
the Hodge filtration gives a sequence of inclusions of  DG-Lie subalgebra  
\begin{equation}\label{eq:chi_p}\End^*(A^{*,*}_X; A^{\ge p,*}_X )=\{f\in \End^*(A^{*,*}_X)\mid f(A^{\ge p,*}_X)\subseteq A^{\ge p,*}_X\}
\xrightarrow{\chi_p}\End^*(A^{*,*}_X)\;.\end{equation}
According to Example~\ref{ex.grassmannianecomplessi} and  Lemma~\ref{lem.conevshofib}, the $L_{\infty}$ structure on 
$C(\chi_{i})$ controls the homotopical deformations of the $i$th subcomplex of the Hodge filtration inside
the de Rham complex: here homotopical means that two deformations are considered isomorphic if they differ by an automorphism of the de Rham complex which is homotopic to the identity.    

It is therefore natural to expect that the universal $i$th period map is induced by an $L_{\infty}$-morphism from 
$KS_X$ to $C(\chi_i)$; in fact we have:

\begin{theorem}\label{th:FMmodelperiod} In the above notation, the map 
\[ \sP^i\colon KS_X\to C(\chi_i),\qquad \xi\mapsto (\bl_{\xi},\bi_{\xi}),\]
is a strict $L_{\infty}$-morphism inducing the $i$th period map.
\end{theorem}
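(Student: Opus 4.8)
The plan is to verify directly that the pair of maps $\xi \mapsto (\bl_\xi, \bi_\xi)$ is a strict $L_\infty$-morphism from $KS_X$ to $C(\chi_i)$ and then to identify the induced map on Maurer--Cartan sets with the geometric period map. Being strict, the morphism is determined by a single linear map $f_1 \colon KS_X[1] \to C(\chi_i)[1]$, and the condition to check is purely that $f_1$ intertwines the $L_\infty[1]$ coderivations, i.e.\ $f_1 q_1 = q_1^{C} f_1$ and $f_1 q_2 = q_2^{C}(f_1 \odot f_1)$ together with $q_{k+1}^{C}(f_1 \otimes \cdots) = 0$ being automatically satisfied because a strict morphism kills all higher obstructions once the two-term compatibility holds --- more precisely, a strict morphism $F=(f_1,0,0,\dots)$ is an $L_\infty$-morphism iff $f_1$ is a morphism of complexes and $f_1$ commutes with the quadratic brackets while sending the higher brackets of the source (here all zero, since $KS_X$ is a DG-Lie algebra) to the higher brackets of the target evaluated on the image. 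So the real content is three identities: (a) $f_1$ is a chain map; (b) $f_1 q_2 = q_2^C (f_1\odot f_1)$; (c) the higher brackets $q_{k+1}^C$ ($k\ge 1$) vanish on elements of the form $s^{-1}f_1(\xi)\otimes \bi_{\xi_1}\odot\cdots\odot \bi_{\xi_k}$ in the appropriate sense forced by strictness.

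For step (a): under the d\'ecalage conventions recalled in Section~\ref{sec:ooalgebras}, the differential on $KS_X[1]$ has Taylor coefficient $q_1(s^{-1}\xi) = -s^{-1}(-\debar\xi) = s^{-1}\debar\xi$, while $q_1^C(s^{-1}l, m) = (-s^{-1}dl, dm - \chi_i(l))$ with $d = [\debar,-]$ the differential of $\End^*(A^{*,*}_X)$. With $l = \bl_\xi$ and $m = \bi_\xi$, chain-map-ness reads $\bl_{\debar\xi} = -[\debar, \bl_\xi]$ and $[\debar,\bi_\xi] = \bl_\xi$ (after identifying $\chi_i(\bl_\xi)$ with $\bl_\xi$ inside $\End^*(A^{*,*}_X)$), which are exactly the holomorphic Cartan homotopy formulas and the Lie-derivative relations stated in \eqref{equ.holomorphiccartanformulas} and the paragraph following it; note we also need $\bl_\xi(A^{\ge i,*}_X) \subseteq A^{\ge i,*}_X$, i.e.\ that $\bl_\xi$ lands in $\End^*(A^{*,*}_X; A^{\ge i,*}_X)$, which is clear since $\bl_\xi = [\de,\bi_\xi]$ preserves holomorphic degree. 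For step (b): the source bracket gives $q_2(s^{-1}\xi_1\odot s^{-1}\xi_2) = (-1)^{|\xi_1|}s^{-1}[\xi_1,\xi_2]$, so the $L[1]$-component of the target must evaluate to $(-1)^{|\xi_1|}s^{-1}\bl_{[\xi_1,\xi_2]}$ and the $M$-component must vanish. The first is precisely $\bl_{[\xi_1,\xi_2]} = [\bl_{\xi_1},\bl_{\xi_2}]$, again from the stated identities; the vanishing of the $M$-component is forced because $q_2^C$ has no component landing in $M$ when both arguments come from $L[1]$. For step (c): by the explicit formula \eqref{brcocone}, $q_{k+1}^C(s^{-1}l \otimes m_1\odot\cdots\odot m_k)$ is a multiple of iterated brackets $[\cdots[[\chi_i(l), m_{\sigma(1)}],\dots],m_{\sigma(k)}]$; for a \emph{strict} morphism the relevant quantity to vanish is $q_{k+1}^C$ evaluated at $s^{-1}f_1(\xi)$ tensored with $k$ copies of the $M$-part of $f_1$ of further inputs --- but a strict morphism has no such further inputs, so these terms simply do not appear in the morphism equation. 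Concretely, the $L_\infty$-morphism equation for a strict $F$ reads $f_1 \circ Q_{KS} = Q_C \circ (f_1)$ as coderivations after corestriction, and since $KS_X$ has only $q_1, q_2$, only the $k\le 2$ Taylor coefficients of $Q_C$ are tested against the image, each on at most two inputs --- exactly steps (a) and (b). Thus (c) is vacuous and there is nothing further to check.

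It remains to confirm that this strict morphism \emph{induces the $i$th period map}. By the general principle recalled in Section~\ref{sec:ooalgebras}, $\sP^i$ induces $\MC_{KS_X} \to \MC_{C(\chi_i)}$; by Lemma~\ref{lem.conevshofib} the latter is identified with $\MC_{\chi_i}$ via $(x,m)\leftrightarrow(x,e^m)$, and by Example~\ref{ex.grassmannianecomplessi} an element $e^m\in\MC_{\chi_i}(B)$ corresponds to the subcomplex $e^m(A^{\ge i,*}_X\otimes B)$ of $A^{*,*}_X\otimes B$. Following the images: an integrable $\xi\in A^{0,1}_X(\Theta_X)\otimes\mathfrak{m}_B$ (a Maurer--Cartan element of $KS_X$, by Lemma~\ref{lem.integrationequation}) maps to $(\bl_\xi,\bi_\xi)$, hence to $e^{\bi_\xi}\in\MC_{\chi_i}(B)$ --- here one uses that $(\bl_\xi, \bi_\xi)\in\MC_{C(\chi_i)}$, which via \eqref{equ.ledueMC} is the statement $e^{-\bi_\xi}\ast 0 = \bl_\xi \in \chi_i(L^1)$, i.e.\ $e^{-\bi_\xi}de^{\bi_\xi} = d + \bl_\xi$, exactly Lemma~\ref{lem.integrationequation}\eqref{item2.integrationequation}. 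The associated subcomplex is $e^{\bi_\xi}(A^{\ge i,*}_X\otimes B) = F^i_\xi$, which by Proposition~\ref{prop.periodgeometric} is precisely the variation of the Hodge filtration $F^iH^*(X_\xi,\C)$ along the deformation $X_\xi$. Hence the map on deformation functors is $\xi\mapsto F^i_\xi$, which is the universal $i$th period map, and compatibility with the gauge/homotopy equivalence relations is exactly the second half of Lemma~\ref{lem.conevshofib}.

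The only mildly delicate point --- and what I would treat most carefully --- is bookkeeping the signs and the $s, s^{-1}$ shifts so that the three identities above come out \emph{on the nose} rather than up to a sign; in particular one must be consistent about whether $d$ on $\End^*$ is $[\debar,-]$ or $[d,-]=[\de+\debar,-]$, since the Cartan formulas separate $\de$ and $\debar$, and about the identification $\chi_i(\bl_\xi)=\bl_\xi$ versus $\bl_\xi=[\de,\bi_\xi]$ when checking chain-map-ness of $f_1$ into the mapping cocone $C(\chi_i)=KS_X\times\End^*(A^{*,*}_X)[-1]$... wait, the cocone is over $\chi_i$, so $C(\chi_i) = \End^*(A^{*,*}_X; A^{\ge i,*}_X) \times \End^*(A^{*,*}_X)[-1]$, and the first factor is where $\bl_\xi$ lives. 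I expect no genuine obstacle beyond this sign/shift accounting; every structural identity needed is already in \eqref{equ.holomorphiccartanformulas}, the displayed Lie-derivative relations, Lemma~\ref{lem.integrationequation}, Proposition~\ref{prop.periodgeometric}, Example~\ref{ex.grassmannianecomplessi}, and Lemma~\ref{lem.conevshofib}.
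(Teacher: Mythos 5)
Your overall strategy (direct verification of the strict $L_\infty$ condition, then identification of the induced map on Maurer--Cartan sets via Lemma~\ref{lem.conevshofib}, Example~\ref{ex.grassmannianecomplessi} and Proposition~\ref{prop.periodgeometric}) is reasonable, and the second half of your argument is correct and matches the paper's sketch. However, the algebraic core --- steps (b) and (c) --- contains a genuine error. For a strict morphism $F=(f_1,0,0,\dots)$ the $L_\infty[1]$ condition is $f_1 q_k = r_k\circ f_1^{\odot k}$ for \emph{every} $k\ge 1$, because $F^j_k=0$ unless $j=k$ and $F^k_k=f_1^{\odot k}$. Since $f_1(s^{-1}\xi)=(s^{-1}\bl_\xi,\bi_\xi)$ has a nonzero component $\bi_\xi$ in $M=\End^*(A^{*,*}_X)$, the higher brackets $q^C_{k+1}\colon L[1]\otimes M^{\odot k}\to M$ of \eqref{brcocone} \emph{are} tested: expanding $r_k f_1^{\odot k}$ by multilinearity produces the terms $\sum_\sigma\pm[\cdots[[\bl_{\xi_{\sigma(1)}},\bi_{\xi_{\sigma(2)}}],\bi_{\xi_{\sigma(3)}}]\cdots,\bi_{\xi_{\sigma(k)}}]$, and for $k\ge 3$ these must vanish because the left-hand side $f_1q_k$ is zero. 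This is not vacuous; it holds because $[\bl_\eta,\bi_\mu]=\pm\bi_{[\mu,\eta]}$ (from $[\bi_x,\bl_y]=\bi_{[x,y]}$) and then $[\bi_{[\mu,\eta]},\bi_\nu]=0$ by the first Cartan formula. Likewise at $k=2$ your claim that ``the $M$-component vanishes on both sides'' is false on both counts: the left-hand side is $f_1\bigl((-1)^{|\xi_1|}s^{-1}[\xi_1,\xi_2]\bigr)$, whose $M$-component is $(-1)^{|\xi_1|}\bi_{[\xi_1,\xi_2]}\ne 0$, and the right-hand side has the $M$-component $-B_1\bigl([\bl_{\xi_1},\bi_{\xi_2}]\pm[\bl_{\xi_2},\bi_{\xi_1}]\bigr)$ with $-B_1=\tfrac12$; equating the two is precisely the (symmetrized) third holomorphic Cartan formula. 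In short, the conditions you declare vacuous are exactly where the content of the theorem lives.

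Two smaller corrections. In step (a) the identity ``$[\debar,\bi_\xi]=\bl_\xi$'' is wrong: the Cartan formula gives $[\debar,\bi_\xi]=\bi_{\debar\xi}$, while $\bl_\xi=[\de,\bi_\xi]$; the correct chain-map condition on the $M$-component is $[d,\bi_\xi]-\chi_i(\bl_\xi)=\bi_{\debar\xi}$ with $d=\de+\debar$ the full de Rham differential (the differential of $\End^*(A^{*,*}_X)$ is $[d,-]$, not $[\debar,-]$). Finally, note that the paper itself does not carry out this verification here: it defers to \cite[Cor.~3.7]{FMperiods} and to the abstract Cartan-homotopy formulation recalled in Section~\ref{sec:periodmap}, where the statement is that whenever $\bi$ is a Cartan homotopy whose boundary $\bl$ factors through a DG-Lie subalgebra, $x\mapsto(\bl_x,s\bi_x)$ is a strict $L_\infty$ morphism --- the proof of which is exactly the computation sketched above.
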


\begin{proof} For the proof we refer to \cite{FMperiods}. However, the proof that $\sP^i$ is a strict 
$L_{\infty}$-morphism is a formal consequence of the Cartan homotopy formulas and will be reproduced later in a more abstract setting. 
Notice that for $B\in \Art$, we have 
\[ \MC_{KS_X}(B)=\left\{\xi\in A^{0,1}_X(\Theta_X)\otimes\mathfrak{m}_B\;\middle|\; 
\debar\xi=\frac{1}{2}[\xi,\xi]\right\}.\]
Given an integrable section $\xi\in \MC_{KS_X}(B)$ its image under $\sP^i$ is the Maurer-Cartan element 
$(\bl_{\xi},\bi_{\xi})\in \MC_{C(f)}(B)$ which correspond to the 
deformed subcomplex $e^{\bi_{\xi}}(A^{\ge i,*}_X\otimes B)$. 
\end{proof}

\bigskip
\section{Models of the homotopy fiber}
\label{sec:homfiber}

The aim of this section is to compare several $A_\infty$ models of the homotopy fiber of an inclusion of DG associative algebras: we will apply these results in the next section to the case of the inclusion $\chi_p$ from \eqref{eq:chi_p}. 
As recalled in Section \ref{sec:FM}, for any morphism of DG Lie algebras 
$f\colon L\to M$ we may put an $L_\infty$ algebra structure on the mapping cocone $\cocone(f)=L\times M[-1]$ which is a model for the homotopy fiber $0\times_M^h L$ of $f$. This $L_\infty$ structure is induced via homotopy transfer from the following DG Lie algebra model for $0\times_M^h L$
\[ K_f:=\{ (l,m(t,dt))\in L\times M[t,dt]\,\,\mbox{s.t.}\,\, m(t,dt)_{|t=0}=0,\, m(t,dt)_{|t=1}=f(l) \},\]
along a certain natural contraction called Dupont's contraction, cf. \cite{chengGetzler,FMcone}; 
we also refer to \cite{donatella} for the explicit 
$L_\infty$ algebra structure on the mapping cocone $\cocone(f-g)$ which is a model for the homotopy equalizer of two morphism $f,g\colon L\to M$ of DG-Lie algebras.

For a morphism $f:A\rh B$ of DG associative algebras we may similarly put a DG associative algebra structure on $K_f$ and via homotopy transfer along Dupont's contraction an $A_\infty$ algebra structure on the mapping cocone $\cocone(f)=A\times B[-1]$. This $A_\infty$ structure was essentially computed in \cite[Prop. 19]{chengGetzler}: the differential is as usual 
$q_1(s^{-1}a,b)=(-s^{-1}da,db-f(a))$, the nontrivial contributions to the higher products $q_k\colon\cocone(f)[1]^{\otimes k}\to\cocone(f)[1]$ are $q_2(s^{-1}a_1\otimes s^{-1}a_2)=(-1)^{|a_1|}s^{-1}(a_1a_2)$ and
\begin{multline}\label{prcocone} q_{i+j+1}(b_1\otimes\cdots\otimes b_i\otimes s^{-1}a\otimes b_{i+1}\otimes \cdots \otimes b_{i+j} )=\\ 
=\frac{B_{i+j}}{i!\,j!}(-1)^{i+1+\sum_{h=1}^{i}|b_h|}\;b_1\cdots b_i f(a) b_{i+1}\cdots b_{i+j},\qquad \forall\,\, i+j\geq1.\end{multline}
It is clear, either by Remark~\ref{rem:sym} or by a direct computation (recall the well known formula $[a,-]^k(b)=\sum_{i=0}^k(-1)^i\binom{k}{i}a^{k-i}ba^{i}$ and apply polarization, also recall $B_{i+j}=(-1)^{i+j}B_{i+j}$ for $i+j\geq2$) that the symmetrized $L_\infty$ algebra structure on $C(f)$ coincides with Fiorenza-Manetti mapping cocone of 
$f_{Lie}\colon A_{Lie}\rh B_{Lie}$.

In the DG associative case there is however a simpler DG associative algebra structure on $\cocone(f)$ given by the cup product 
$\cup\colon\cocone(f)^{\otimes2}\to\cocone(f)$,
\begin{equation}\label{asscocone} (a_1,sb_1)\cup(a_2,sb_2)=(a_1a_2,s(b_1f(a_2))), \end{equation}
which is also immediately seen to be a model for the homotopy fiber $0\times^h_BA$. The first result of this section is the determination of explicit $A_\infty$ isomorphisms between these two models of the homotopy fiber, roughly given by the exponential and logarithm on $B$: this will be used to simplify the computations in Theorem~\ref{thm.derivedproducts}.

\begin{definition} We will call $\cocone(f)$ with the DG algebra structure \eqref{asscocone} the associative mapping cocone of $f$ and denote it by $\cocone(f)_{As}$, while we will call $\cocone(f)$ with the $A_\infty$ algebra structure \eqref{prcocone} the Fiorenza-Manetti mapping cocone of $f$ and denote it by $\cocone(f)_\infty$. \end{definition}

\begin{lemma}\label{lem.assvsinftycocone} 
Given a morphism $f\colon A\to B$ of DG associative algebras, there exist two  isomorphisms of $A_\infty[1]$ algebras
$E\colon\cocone(f)_\infty[1]\to\cocone(f)_{As}[1]$ and $L\colon\cocone(f)_{As}[1]\to\cocone(f)_{\infty}[1]$, the one inverse of the other, whose 
Taylor coefficients $e_k,l_k\colon\cocone(f)[1]^{\otimes k}\to\cocone(f)[1]$ are 
$e_1=l_1=\id_{\cocone(f)[1]}$ and, for $k\geq2$,
\[e_k((s^{-1}a_1,b_1)\otimes\cdots\otimes(s^{-1}a_k,b_k))=\left(0,\frac{1}{k!}b_1\cdots b_k\right),\]
\[l_k((s^{-1}a_1,b_1)\otimes\cdots\otimes(s^{-1}a_k,b_k))=\left(0,\frac{(-1)^{k+1}}{k}b_1\cdots b_k\right).\]
\end{lemma}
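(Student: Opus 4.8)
The plan is to treat $E$ and $L$ as the morphisms of graded tensor coalgebras $T(\cocone(f)[1])\to T(\cocone(f)[1])$ that they determine (a system of Taylor coefficients determines a unique such morphism, cf.\ Section~\ref{sec:ooalgebras}), and to establish two facts: that $E$ and $L$ are inverse to each other as coalgebra morphisms, and that $E$ intertwines the two codifferentials, i.e.\ $EQ_\infty=Q_{As}E$, where $Q_\infty$ and $Q_{As}$ encode \eqref{prcocone} and \eqref{asscocone}. The first fact makes $E$ and $L$ isomorphisms of graded coalgebras; together with the second it makes both isomorphisms of $A_\infty[1]$ algebras, the inverse of an $A_\infty[1]$ isomorphism being automatically one.

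For the first fact I would write an element of $\cocone(f)[1]=A[1]\oplus B$ as a pair $(s^{-1}a,b)$ and use the elementary observation that, for $k\geq2$, both $e_k$ and $l_k$ depend only on the $B$-components of their arguments and take values in the summand $B$, while $e_1=l_1=\id$. Hence, evaluating $(E\circ L)_n=\sum_{k=1}^{n}e_k\circ L^k_n$ on inputs $v_i=(s^{-1}a_i,b_i)$ and expanding $L^k_n$ over ordered partitions $n=i_1+\dots+i_k$, each summand collapses (once the Koszul signs are checked to cancel) to $\bigl(0,\tfrac1{k!}\prod_{m=1}^k\tfrac{(-1)^{i_m+1}}{i_m}\,b_1\cdots b_n\bigr)$ for $n\geq2$, and to $v_1$ for $n=1$. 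Summing, the scalar in front of $b_1\cdots b_n$ is the coefficient of $x^n$ in $\sum_{k\geq1}\tfrac1{k!}\bigl(\sum_{i\geq1}\tfrac{(-1)^{i+1}}{i}x^i\bigr)^k=\exp\!\bigl(\log(1+x)\bigr)-1=x$, so $(E\circ L)_n=0$ for $n\geq2$ and $E\circ L=\id$; interchanging the roles of $e$ and $l$ and using $\log\bigl(\exp(x)\bigr)=x$ gives $L\circ E=\id$. Thus $L=E^{-1}$ as graded coalgebra morphisms.

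For the second fact I would verify the $A_\infty[1]$ structure equations $\sum_{j}e_j\circ(Q_\infty)^j_n=\sum_{k}(q_{As})_k\circ E^k_n$ for all $n$; since $\cocone(f)_{As}$ is a genuine DG algebra only $(q_{As})_1$ and $(q_{As})_2$ occur on the right, and by multilinearity it suffices to test on inputs in which every slot is either $(s^{-1}a,0)$ (``pure $A$'') or $(0,b)$ (``pure $B$''). Splitting the equation into its $A[1]$- and $B$-components: the $A[1]$-component is immediate (for $n=1$ both sides are $q_1$, for $n=2$ both sides reduce to the product $(-1)^{|a_1|}s^{-1}(a_1a_2)$ inherited from $A$, and for $n\geq3$ both sides have vanishing $A[1]$-part, all surviving composites landing in $B$); on inputs with no pure-$A$ slot both sides equal $\bigl(0,\tfrac1{n!}d(b_1\cdots b_n)\bigr)$; on inputs with two or more pure-$A$ slots both sides vanish, since neither \eqref{prcocone} nor the product \eqref{asscocone} can absorb two ``$A$-directions'' once $n\geq3$, and the leftover $A$-direction (a factor with vanishing $B$-component) is then killed inside some $e_j$, $j\geq2$. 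This leaves inputs with exactly one pure-$A$ slot, which is the heart of the matter: there the left-hand side is the alternating sum over the Bernoulli-weighted terms of \eqref{prcocone} precomposed with the $\tfrac1{j!}$-coefficients of $E$, and the whole sum reduces to a finite family of identities among Bernoulli numbers and binomial coefficients, all consequences of $\sum_{k\geq0}\tfrac{B_k}{k!}t^k=\tfrac{t}{e^t-1}$. The conceptual content is that $E$ enacts, on Maurer--Cartan elements, the substitution $(x,m)\mapsto(x,e^m-1)$, which carries the Maurer--Cartan equation of $\cocone(f)_\infty$ (which by the proof of \eqref{equ.ledueMC} reads $f(x)=e^{-m}\ast0=e^{-m}\,d(e^m)$) into that of $\cocone(f)_{As}$; informally this is the identity $\tfrac{t}{e^t-1}\cdot\tfrac{e^t-1}{t}=1$, and it is precisely the reason the Bernoulli numbers appear in \eqref{prcocone} in the first place.

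I expect the only real obstacle to be the sign bookkeeping: tracking the Koszul signs hidden in $(Q_\infty)^j_n$ and $E^k_n$, the suspensions $s^{\pm1}$, and the explicit sign $(-1)^{i+1+\sum|b_h|}$ in \eqref{prcocone}, and confirming that they conspire so that the clean combinatorial skeleton above goes through. The reduction ``$e_k,l_k$ for $k\geq2$ see and produce only $B$-components'' is the load-bearing simplification and must be applied with care, since these maps get post-composed with operators $(Q_\infty)^j_n$ whose output typically has both an $A[1]$- and a $B$-component; once that is in place, the remaining computations are short.
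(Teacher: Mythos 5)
Your proposal matches the paper's proof in all essentials: the paper likewise reduces the morphism identity to inputs with exactly one $A[1]$-slot (all other cases being trivially or Leibniz-satisfied) and resolves the resulting coefficient identity by substituting $x=\log(1+z)$, $y=\log(1+w)$ into the Bernoulli generating function $-\tfrac{y-x}{e^{y-x}-1}$ — exactly the $\tfrac{t}{e^t-1}\cdot\tfrac{e^t-1}{t}=1$ mechanism you describe. The only (immaterial) difference is that the paper verifies that $L$ rather than $E$ is an $A_\infty[1]$ morphism, which puts the two-term structure $(r_1,r_2)$ of $\cocone(f)_{As}$ on the left-hand side of the structure equation and concentrates all the Bernoulli bookkeeping in the single sum $\sum_i q_iL^i_k$.
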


\begin{proof} We denote by $(\cocone(f)_{As}[1],r_1,r_2,0,\ldots,0,\ldots)$ the $A_\infty[1]$ structure on $\cocone(f)_{As}[1]$. It is straightforward that $E$ and $L$ are inverse automorphisms of the tensor coalgebra over $\cocone(f)[1]$, thus it suffices to show that $L$ is an $A_\infty[1]$ morphism, that is, the relation
\begin{equation}\label{eqn}l_k R^k_k + l_{k-1}R^{k-1}_k = \sum_{i=1}^k q_iL^i_k,\qquad k\geq2,\end{equation}
(cf. Section \ref{sec:ooalgebras} for notations). Looking at the explicit formulas \eqref{prcocone} and \eqref{asscocone} one quickly realizes that the only cases where the necessary relation \eqref{eqn} is not trivially satisfied are terms of type $b_1\otimes\cdots\otimes b_i\otimes s^{-1}a\otimes b_{i+1}\otimes\cdots\otimes b_{i+j}$, $i+j+1=k$: in this case the left hand side of \eqref{eqn} is easily computed and it is equal to
\begin{equation}\label{eqn2}(l_k R^k_k + l_{k-1}R^{k-1}_k)(s^{-1}a\otimes b_{1}\otimes\cdots\otimes b_{j}) = \frac{(-1)^{j+1}}{j+1}f(a)b_1\cdots b_j\end{equation}
when $i=0$, and to
\begin{multline}\label{eqn3}
(l_k R^k_k + l_{k-1}R^{k-1}_k)(b_1\otimes\cdots\otimes b_i\otimes s^{-1}a\otimes b_{i+1}\otimes\cdots\otimes b_{i+j}) =\\
= (-1)^{\sum_{h=1}^{i}|b_h|}\left(\frac{(-1)^{i+j+1}}{i+j+1}+\frac{(-1)^{i+j}}{i+j}\right)b_1\cdots 
b_if(a)b_{i+1}\cdots b_{i+j}\end{multline}
when $i\geq1$. We turn our attention to the rhs of \eqref{eqn}: this becomes
\[\sum_{i_1+\cdots+i_p=i}\sum_{j_1+\cdots+j_q=j}q_{p+q+1}\left(\frac{(-1)^{i_1+1}}{i_1}b_1\cdots b_{i_1}\otimes\cdots\otimes s^{-1}a\otimes\cdots\otimes\frac{(-1)^{j_q+1}}{j_q}b_{i+j-j_q+1}\cdots b_{i+j}\right).\]
By the formula \eqref{prcocone} it's clear that this is equal to $(-1)^{\sum_{h=1}^{i}|b_h|}C_{i,j}b_1\cdots b_if(a)b_{i+1}\cdots b_{i+j}$ for some coefficient $C_{i,j}\in\mathbb{Q}$, and a little more thought shows that we can identify $C_{i,j}$ with the coefficient of $z^iw^j$ in the Taylor series expansion of 
\[\varphi(x,y)=\sum_{i,j\geq0}\frac{(-1)^{i+1}B_{i+j}}{i!j!}x^i y^j= -\sum_{k\geq0}\frac{B_k}{k!}(y-x)^k=-\frac{y-x}{e^{y-x}-1}\] 
after the substitution 
\[x=\sum_{k\geq1}\frac{(-1)^{k+1}}{k}z^k=\log(1+z),\qquad y=\sum_{k\geq1}\frac{(-1)^{k+1}}{k}w^k=\log(1+w)\,.\] 
Next we observe that $e^{\log(1+w)-\log(1+z)}-1=\dfrac{1+w}{1+z}-1=\dfrac{w-z}{1+z}$, and therefore
\[\begin{split}
\varphi(\log(1+z),\log(1+w))&=-\frac{1+z}{w-z}(\log(1+w)-\log(1+z))\\
&=\frac{1+z}{w-z}\sum_{k\geq1}\frac{(-1)^{k}}{k}(w-z)(w^{k-1}+zw^{k-2}+\cdots+z^{k-1}),\end{split}\]
from which we get $C_{0,j}=\dfrac{(-1)^{j+1}}{j+1}$ and 
$C_{i,j}=\dfrac{(-1)^{i+j+1}}{i+j+1}+\dfrac{(-1)^{i+j}}{i+j}$ for $i\geq1$. 
By comparison with \eqref{eqn2} and \eqref{eqn3} this concludes the proof of the lemma.
\end{proof}

Next we consider the particular case when $f=i\colon A\to B$ is an inclusion. Given a graded subspace $C\subset B$ such that $B=A\oplus C$ as graded spaces we can consider the following contraction, where we denote by $P\colon B\to C$ the projection with kernel $A$, by $P^{\bot}=\id_B-P$, by $d$ the differential on $B$ and by $q_1$ the differential on $\cocone(i)[1]$ (notice that since $d(A)\subset A$ we have $(Pd)^2=P(d)^2=0$ and thus $Pd$ is a differential on $C$):
\begin{equation}\label{contr}
\begin{gathered}
\xymatrix{(C, Pd)\ar@<.4ex>[r]^-{f_1}&(\cocone(i)[1], q_1)\ar@<.4ex>[l]^-{g_1}\ar@(ul,ur)[]^K},\\ 
f_1(c)=(s^{-1}P^\bot dc,c),\qquad g_1(s^{-1}a,b)=Pb,\qquad K(s^{-1}a,b)=(s^{-1}P^\bot b,0 ).
\end{gathered}\end{equation}

Via homotopy transfer from the Fiorenza-Manetti mapping cocone or the associative mapping cocone there are induced $A_\infty[1]$ algebra structures on $C$ giving us two other models for the homotopy fiber of~$i$. In the particular case of interest to us we may choose a $C$ as above such that moreover $C$ is a square zero graded subalgebra of $B$ (this is a DG associative analog of the setup to Voronov constructions of higher derived brackets \cite{voronov,voronov2}). In the following proposition we make explicit the homotopy transfer formulas under this additional hypothesis, first we introduce a notation: for an integer $k\geq1$ and an ordered partition $k=i_1+\cdots+i_j$ with $j,i_1,\ldots,i_j\geq1$, we define the degree zero map $g^{i_1,\ldots,i_j}\colon\cocone(i)^{\otimes k}\to C$ by
\begin{multline}\label{equ.gconindici} 
g^{i_1,\ldots,i_j}((s^{-1}a_1,b_1)\otimes\cdots\otimes (s^{-1}a_k,b_k))=\\ 
=P(b_1\cdots b_{i_1}\cdot P( b_{i_1+1}\cdots b_{i_1+i_2}\cdot P(\cdots  P(b_{i_1+\cdots+i_{j-1}+1}\cdots b_k)\cdots))),\end{multline}
where the inner $P$s are inserted according to the partition $(i_1,\ldots,i_j)$. For instance:
\[g^1(s^{-1}a,b)=g_1(s^{-1}a,b)=P(b),\qquad g^{2,1}((s^{-1}a_1,b_1)\otimes (s^{-1}a_2,b_2)\otimes (s^{-1}a_3, b_3))=P(b_1\cdot b_2\cdot P(b_3)),\]
\[g^{2,1,2}((s^{-1}a_1,b_1)\otimes\cdots \otimes (s^{-1}a_5,b_5))=P(b_1\cdot b_2\cdot P(b_3\cdot P(b_4\cdot b_5))).\]

\medskip
\begin{theorem}\label{thm.derivedproducts} Given a DG associative algebra $(B,d,\cdot)$ together with a decomposition $B=A\oplus C$ into the direct sum of a DG subalgebra $A$ and a graded subspace $C$ such that $C\cdot C=0$, there is an $A_\infty[1]$ algebra structure $(C,Pd,\Phi(d)_2,0,\ldots,0,\ldots)$ on $C$ given by the derived product
\[\Phi(d)_2(c_1\otimes c_2)=P(dc_1\cdot c_2)\]
together with $A_\infty[1]$ quasi-isomorphisms $F_{As}\colon C\to \cocone(i)_{As}[1]$, 
$G_{As}\colon \cocone(i)_{As}[1]\to C$, $F_\infty\colon C\to \cocone(i)_\infty[1]$, 
$G_\infty\colon \cocone(i)_\infty[1]\to C$
given by $f_{As,1}=f_1=f_{\infty,1}$,
\[f_{As,2}(c_1\otimes c_2)=s^{-1}P^\bot(dc_1\cdot c_2)=f_{\infty,2}(c_1\otimes c_2),\]
$f_{As,k}=0=f_{\infty,k}$ for $k\geq3$. In terms of the linear maps \eqref{equ.gconindici} we have
\[ g_{As,k}=\sum_{i_1+\cdots+ i_j=k}(-1)^{k+j}g^{i_1,\ldots,i_j},\qquad g_{\infty,k}=\sum_{i_1+\cdots+ i_j=k}\frac{(-1)^{k+j}}{i_1!\cdots i_j!}g^{i_1,\ldots,i_j}.\]
The following diagrams are commutative, where $E$ and $L$ are the $A_\infty[1]$ isomorphisms described in  
Lemma~\ref{lem.assvsinftycocone}.
\[\xymatrix{\cocone(i)_\infty\ar@<.4ex>[rr]^-E&&\cocone(i)_{As}\ar@<.4ex>[ll]^-{L}\\ & C\ar[ul]^-{F_\infty}\ar[ur]_-{F_{As}} & }\qquad\xymatrix{\cocone(i)_\infty\ar@<.4ex>[rr]^-E\ar[dr]_-{G_\infty}&&\cocone(i)_{As}\ar@<.4ex>[ll]^-{L}\ar[dl]^-{G_{As}}\\ & C & }\]
\end{theorem}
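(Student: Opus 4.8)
The plan is to verify the claimed formulas by pure homotopy transfer bookkeeping, taking advantage of Lemma~\ref{lem.assvsinftycocone} to reduce as much as possible to the associative mapping cocone. First I would observe that the contraction \eqref{contr} satisfies the hypotheses of the homotopy transfer theorem (one checks $g_1 f_1 = \id_C$ directly, and that $q_1 K + K q_1 = f_1 g_1 - \id$ using $d(A)\subset A$, the definition of $q_1$ on $\cocone(i)[1]$, and $P^\bot P^\bot = P^\bot$), so the transferred $A_\infty[1]$ structures $(C,\ldots)$ together with morphisms $F,G$ exist by that theorem and the only real content is to compute the Taylor coefficients explicitly. For the associative cocone this is the cleanest case: since $f_{As,1} = f_1$ and the transfer recursion gives $f_{As,k} = \sum_{j=2}^k K r_j F^j_k$, the hypothesis $C\cdot C = 0$ forces almost all products of several $C$-components to vanish, and I expect a short induction to show that $f_{As,k}=0$ for $k\geq 3$ while $f_{As,2}(c_1\otimes c_2) = K r_2 (f_1(c_1)\otimes f_1(c_2)) = (s^{-1}P^\bot(dc_1\cdot c_2),0)$, matching the claim; the same computation gives $r_2(c_1\otimes c_2) = g_1 r_2(f_1 c_1\otimes f_1 c_2) = P(dc_1\cdot c_2) = \Phi(d)_2(c_1\otimes c_2)$ and $r_k = 0$ for $k\geq 3$, which is exactly the stated $A_\infty[1]$ algebra $(C,Pd,\Phi(d)_2,0,\ldots)$.

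Next I would compute $G_{As}$ via the explicit recursion $g_k = \sum_{j=1}^{k-1} g_j Q_k^j K_k$ for the transferred morphism, where here $Q = R$ is the associative cocone coderivation and $K_k = \sum_{i=0}^{k-1}\id^{\otimes i}\otimes K\otimes (f_1 g_1)^{\otimes(k-i-1)}$. The key simplification is again $C\cdot C = 0$: since $f_1 g_1(s^{-1}a,b) = f_1(Pb) = (s^{-1}P^\bot dPb, Pb)$ has second component in $C$, and $K(s^{-1}a,b) = (s^{-1}P^\bot b,0)$, the products appearing in $R$ that survive are precisely those of the nested shape recorded by $g^{i_1,\ldots,i_j}$ in \eqref{equ.gconindici}. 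Tracking the signs in the recursion — each application of $R_2$ on the associative cocone contributes a $(-1)$ from the $q_2$ sign convention $q_2(s^{-1}a_1\otimes s^{-1}a_2) = (-1)^{|a_1|}s^{-1}(a_1 a_2)$ combined with the degree shifts, producing the factor $(-1)^{k+j}$ after a partition is fully resolved — yields $g_{As,k} = \sum_{i_1+\cdots+i_j = k}(-1)^{k+j} g^{i_1,\ldots,i_j}$. I expect the cleanest way to organize this is an induction on $k$, peeling off the last block $i_j$ of the partition.

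Having dealt with the associative cocone, the Fiorenza–Manetti side follows formally: I would define $F_\infty := E^{-1}\circ F_{As} = L\circ F_{As}$ and $G_\infty := G_{As}\circ L^{-1} = G_{As}\circ E$ (or the reverse composition, matching the diagram), which are automatically $A_\infty[1]$ quasi-isomorphisms as composites of such, and the two diagrams in the statement commute by construction. It then remains to check that these composites have the asserted Taylor coefficients. Since $L$ and $E$ are identity on linear parts and have Taylor coefficients supported purely in the $C$-component (the $b_1\cdots b_k$ terms of Lemma~\ref{lem.assvsinftycocone}), composing $F_{As}$ with $L$ changes nothing below degree $3$ — giving $f_{\infty,2}(c_1\otimes c_2) = s^{-1}P^\bot(dc_1\cdot c_2)$ and $f_{\infty,k}=0$ for $k\geq 3$, because $F_{As}$ already has $f_{As,k}=0$ there and the correction terms from $l_k$ involve products of $C$-components which vanish. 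For $G_\infty = G_{As}\circ E$, expanding $(G_{As}\circ E)_k$ via the coalgebra composition formula and using that the components of $E$ are $e_j((s^{-1}a_i,b_i)) = (0,\tfrac{1}{j!}b_1\cdots b_j)$, one sees that each block of size $i_\ell$ in a partition picks up an extra $\tfrac{1}{i_\ell!}$ relative to $g_{As,k}$, which precisely turns $(-1)^{k+j}g^{i_1,\ldots,i_j}$ into $\tfrac{(-1)^{k+j}}{i_1!\cdots i_j!}g^{i_1,\ldots,i_j}$, matching the stated $g_{\infty,k}$.

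I expect the main obstacle to be the sign and coefficient bookkeeping in the $G_{As}$ computation: the recursion $g_k = \sum_j g_j Q_k^j K_k$ interleaves the Koszul signs coming from the degree $+1$ pieces $s^{-1}a$, the sign in $q_2$, and the combinatorics of which tensor slots $K$ versus $f_1 g_1$ are applied to, so isolating the surviving nested-$P$ terms and confirming the uniform exponent $k+j$ requires care. Everything else — existence of the structures, commutativity of the diagrams, and transporting results across $E$ and $L$ — is formal once Lemma~\ref{lem.assvsinftycocone} and the homotopy transfer theorem are in hand.
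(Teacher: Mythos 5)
Your proposal follows essentially the same route as the paper: transfer from the associative cocone along the contraction \eqref{contr} to get the structure on $C$, $F_{As}$ and $G_{As}$ (using $C\cdot C=0$ and induction on the recursion, peeling off the last block of the partition), then define $F_\infty=LF_{As}$ and $G_\infty=G_{As}E$ and identify the Taylor coefficients. The one place where your plan understates the work is the coefficient of $g^{i_1,\ldots,i_j}$ in $(G_{As}E)_k$: it is not a single term in which each block ``picks up'' a factor $1/i_\ell!$ (that reading would even give the wrong sign), but a sum over all refinements of the partition, which collapses to $(-1)^{k+j}/(i_1!\cdots i_j!)$ only via the generating-function identity $\sum_{p\ge1}(1-e^{-t})^p=e^t-1$ used in the paper's \eqref{eq:sumper}.
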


\begin{proof}
To begin we will show that the given $A_\infty[1]$ structure on $C$ and $A_\infty[1]$ quasi-isomorphisms 
$F_{As}\colon C\to\cocone(i)_{As}[1]$, $G_{As}\colon \cocone(i)_{As}[1]\to  C$ are the ones induced via homotopy transfer from $(\cocone(i)_{As},r_1,r_2,0,\ldots,0,\ldots)$ along the contraction \eqref{contr}. We denote by 
$p_B\colon\cocone(i)[1]\to B$ the projection, then
\[p_Br_2f_1^{\otimes2}(c_1\otimes c_2)=p_Br_2((s^{-1}P^\bot dc_1,c_1)\otimes (s^{-1}P^\bot dc_2,c_2))=(-1)^{|c_1|+1}c_1\cdot P^\bot dc_2=dc_1\cdot c_2,\]
since we have $c\cdot P^\bot b=c\cdot b$ for all $c\in C$, $b\in B$, and $0=d(c_1\cdot c_2)=dc_1\cdot c_2+(-1)^{|c_1|}c_1\cdot dc_2$ for all $c_1,c_2\in C$. For the moment we denote by $(C,Pd,\phi_2,\ldots,\phi_k,\ldots)$ and $F=(f_1,\ldots,f_k,\ldots)$ the $A_\infty[1]$ structure on $C$ and $A_\infty[1]$ morphism $F\colon C\to\cocone(i)_{As}[1]$ induced via homotopy transfer from $\cocone(i)_{As}[1]$, by the above $\phi_2(c_1\otimes c_2)=g_1r_2f_1^{\otimes 2}(c_1\otimes c_2)=P(dc_1\cdot c_2) =\Phi(d)_2(c_1\otimes c_2)$ and $f_2(c_1\otimes c_2)=Kr_2f_1^{\otimes 2}(c_1\otimes c_2)=s^{-1}P^\bot(dc_1\cdot c_2) =f_{As,2}(c_1\otimes c_2)$. Next we see that
\[\begin{split} 
p_Br_2F^2_3(c_1\otimes c_2\otimes c_3)&=p_Br_2(s^{-1}P^\bot(dc_1\cdot c_2)\otimes c_3 + c_1\otimes s^{-1}P^\bot(dc_2\cdot c_3))\\
&=(-1)^{|c_1|+1}c_1\cdot dc_2\cdot c_3 = dc_1\cdot c_2\cdot c_3=0,\end{split}\]
hence $\phi_3=p_1r_2F^2_3=0=Kr_2F^2_3=f_3$. Assuming inductively that $\phi_j=0=f_j$ for all $3\leq j < k$ it is straightforward to see that $p_Br_2F^2_k=0$ and thus also $\phi_k=0=f_k$. It remains to show that the $A_\infty[1]$ morphism $G=(g_1,\ldots,g_k,\ldots)\colon\cocone(i)_{As}[1]\to C$ given by the homotopy transfer formulas is the same as $G_{As}$ in the claim of the proposition. Recall that $g_k$ is defined by the recursion
\begin{equation}\label{recursion} g_k((s^{-1}a_1,b_1)\otimes\cdots\otimes (s^{-1}a_k,b_k))=g_{k-1}R^{k-1}_kK_k\left((s^{-1}a_1,b_1)\otimes\cdots\otimes (s^{-1}a_k,b_k) \right), \end{equation}
where $K_k:=\sum_{j=1}^k \id_{\cocone(i)[1]}^{\otimes j-1}\otimes K\otimes (f_1g_1)^{\otimes k-j}\colon \cocone(i)[1]^{\otimes k}\to\cocone(i)[1]^{\otimes k}$.

Starting  with $g_{As,1}=g_1$ we assume inductively we have shown $g_{As,j}=g_j$ for $1\leq j<k$. We claim that $g_{k-1}((s^{-1}a_1,b_1)\otimes\cdots\otimes(s^{-1}a_{k-1},b_{k-1}))=0$ whenever $b_{k-1}\in C$: this is because there is a bijective correspondence between the set of ordered partitions $k=i_1+\cdots +i_p$ of $k$ with $i_p=1$ and those with $i_p>1$ given by $(i_1,\ldots,i_j,1)\mapsto (i_1,\ldots,i_j+1)$. Moreover when $b_{k-1}\in C$ we have $g^{i_1,\cdots,i_j,1}((s^{-1}a_1,b_1)\otimes\cdots\otimes(s^{-1}a_{k-1},b_{k-1}))=g^{i_1,\cdots,i_j+1}((s^{-1}a_1,b_1)\otimes\cdots\otimes(s^{-1}a_{k-1},b_{k-1}))$ and the two appear with opposite signs in the summation for $g_{k-1}=g_{As,k-1}$. Clearly, we also have $g_{k-1}((s^{-1}a_1,b_1)\otimes\cdots\otimes(s^{-1}a_{k-1},b_{k-1}))=0$ whenever $b_j=0$ for some $1\leq j\leq k-1$, putting these two fact together we see that \eqref{recursion} reduces to:
\[\begin{split} 
g_k((s^{-1}&a_1,b_1)\otimes\cdots\otimes (s^{-1}a_k,b_k))=\\
&=(-1)^{|b_{k-1}|}g_{k-1}((s^{-1}a_1,b_1)\otimes\cdots\otimes r_2((s^{-1}a_{k-2},b_{k-1})\otimes(s^{-1}P^\bot b_{k},0)) )\\ 
&=-g_{k-1}((s^{-1}a_1,b_1)\otimes\cdots\otimes (s^{-1}a_{k-2},b_{k-2})\otimes(\cdots,b_{k-1}\cdot P^{\bot}b_{k}) )\\ 
&=\sum_{i_1+\cdots+i_j=k-1}(-1)^{k+j}g^{i_1,\ldots,i_j}((s^{-1}a_1,b_1)\otimes\cdots\otimes (\cdots, b_{k-1}\cdot b_k-b_{k-1}\cdot Pb_k))\\ 
&=\sum_{i_1+\cdots+i_j=k-1}\!\!\!\!\left( (-1)^{k+j}g^{i_1,\cdots,i_j+1}+(-1)^{k+j+1}g^{i_1,\cdots,i_j,1}\right)((s^{-1}a_1,b_1)\otimes\cdots\otimes (s^{-1}a_k,b_k))\\ 
&=\sum_{i_1+\cdots+i_j=k}\!\!(-1)^{k+j}g^{i_1,\ldots,i_j}((s^{-1}a_1,b_1)\otimes\cdots\otimes (s^{-1}a_k,b_k))\\ 
&=g_{As,k}((s^{-1}a_1,b_1)\otimes\cdots\otimes (s^{-1}a_k,b_k)).
\end{split}\]
This concludes the first part of the proof.

At this point we could similarly prove that $(C,Pd,\Phi(d)_2,0,\ldots,0,\ldots)$, $F_{\infty}$ and $G_\infty$ are the $A_\infty[1]$ structure and $A_\infty[1]$ quasi-isomorphisms induced via homotopy transfer from $\cocone(i)_\infty[1]$ along the contraction \eqref{contr}: this is true but the computation is a little harder and we won't actually need this fact, as we only need to be able to compare the various models. Instead, we observe that the claim of the theorem follows once we show $F_\infty=LF_{As}$ and $G_\infty = G_{As}E$. The first identity is easy and left to the reader; for the second we have
\[\begin{split}
&\sum_{i=1}^k g_{As,i}E^i_k((s^{-1}a_1,b_1)\otimes\cdots\otimes (s^{-1}a_k,b_k))=\\
&=\sum_{j_1+\cdots+ j_i=k}\sum_{i_1+\cdots+i_j=i}(-1)^{i+j}g^{i_1,\ldots,i_j}\left(\left(\cdots,\frac{1}{j_1!}b_1\cdots b_{j_1}\right)\otimes\cdots\otimes\left(\cdots,\frac{1}{j_i!}b_{k-j_i+1}\cdots b_{k}\right)\right)\\
&=\sum_{j_1+\cdots+ j_i=k}\sum_{i_1+\cdots+i_j=i}\frac{(-1)^{i+j}}{j_1!\cdots j_i!}g^{j_1+\cdots+j_{i_1},\ldots,j_{i-i_j+1}+\cdots+j_{i_j}}((s^{-1}a_1,b_1)\otimes\cdots\otimes (s^{-1}a_k,b_k))\\
&=\!\!\!\sum_{i_1+\cdots+i_j=k}\left(\sum_{\tiny\begin{array}{c}h^1_1+\cdots+h^1_{p_1}=i_1\\
\cdots\\
h^j_1+\cdots+h^j_{p_j}=i_j\end{array}}\frac{(-1)^{j+p_1+\cdots + p_j}}{h^1_{1}!\cdots h^1_{p_1}!\cdots h^j_{1}!\cdots h^j_{p_j}!}\right)g^{i_1,\ldots,i_j}((s^{-1}a_1,b_1)\otimes\cdots\otimes (s^{-1}a_k,b_k))
\end{split}\]
and by the equality
\begin{equation}\label{eq:sumper}\begin{split} 
\sum_{i\geq1}\left(\sum_{h_1+\cdots+h_p=i}\frac{(-1)^{p+i}}{h_1!\cdots h_p!}\right)t^i&=
\sum_{p\geq1}\left(\sum_{h\geq1}\frac{(-1)^{h+1}}{h!}t^h\right)^p\\
&=\sum_{p\geq1}(1-e^{-t})^p=\frac{1-e^{-t}}{e^{-t}}=e^t-1=\sum_{i\geq1}\frac{1}{i!}t^i\end{split}
\end{equation}
we get 
\[ \sum_{\tiny\begin{array}{c}h^1_1+\cdots+h^1_{p_1}=i_1\\
\cdots\\
h^j_1+\cdots+h^j_{p_j}=i_j\end{array}}\frac{(-1)^{j+p_1+\cdots + p_j}}{h^1_{1}!\cdots h^1_{p_1}!\cdots h^j_{1}!\cdots h^j_{p_j}!}=\frac{(-1)^{j+i_1+\cdots + i_j}}{i_1!\cdots i_j!}=\frac{(-1)^{j+k}}{i_1!\cdots i_j!}\,.\]
\end{proof}

\begin{remark} In general homotopy transfer from Fiorenza-Manetti mapping cocone or the associative one will induce different $A_\infty[1]$ structures on $C$: the fact that they are the same in the case of the previous proposition follows from the hypothesis $C\cdot C=0$. Moreover, we remark that we only used this hypothesis in the computation of the $A_\infty[1]$ structure on $C$ and the $A_\infty[1]$ morphisms $F_{As},F_\infty$, but never in the computation of $G_{As},G_\infty$: in particular, the formulas for $G_{As},G_\infty$ continue to hold in the more general situation when $C$ is just an algebraic complement of $A$ in $B$.
\end{remark}

\bigskip
\section{Algebraic models of formal period maps}
\label{sec:periodmap}

We recall, from \cite{FMperiods}, the definition and the basic properties of Cartan homotopies.

\begin{definition} A Cartan homotopy between two DG Lie algebras $L$ and $M$ is a linear map 
$\bi\colon L\to M$, $x\mapsto \bi_x$, of degree $-1$  
such that the following formal Cartan identities are satisfied:
\[[\bi_x,\bi_y]=0,\qquad[\bi_x,\bl_y]=\bi_{[x,y]},\qquad\forall x,y\in L, \]
where $\bl\colon L\rh M$, $x\mapsto\bl_x$, is the degree zero map, called the boundary of $\bi$, 
defined by $\bl_x:=d_M\bi_x + \bi_{d_Lx}$. 
Then it is easy to prove that $\bl$ is a morphism of graded Lie algebras: moreover, it is proven in 
\cite[Cor. 3.7]{FMperiods} that when $\bl$ factors through the inclusion $i\colon N\to M$ of a DG-Lie subalgebra then 
$L\to\cocone(i)$, $x\mapsto(\bl_x,s\bi_x)$, is a strict morphism of $L_\infty$ algebras.\end{definition}

\begin{remark} Given a DG Lie algebra $L$, we consider the graded space $\operatorname{Cone}(L):=L\oplus L[1]$. Given an element $x\in L$, we denote by $\bi_x$ its copy in $L[1]\subset\operatorname{Cone}(L)$ and by $\bl_x$ its copy in $L\subset\operatorname{Cone}(L)$. It is straightforward to verify that the differential $d(\bl_x) = \bl_{d_Lx}$, $d(\bi_x)=\bl_x -\bi_{d_Lx}$ and the bracket induced by Cartan's formulas $[\bi_x,\bi_y]=0, [\bi_x,\bl_y]=\bi_{[x,y]}, [\bl_x,\bl_y]=\bl_{[x,y]},\forall x,y\in L$, make $\operatorname{Cone}(L)$ into a DG Lie algebra (cf. with the construction from \cite[p. 291]{Qui}), and that $\bi^{uv}\colon L \to \operatorname{Cone}(L)\colon x\to\bi_x$ is a Cartan homotopy. This is the universal Cartan homotopy, that is, any other Cartan homotopy $\bi: L\to M$ is the composition $\bi = h\circ\bi^{uv}$ of the universal one and a morphism of DG Lie algebras $h:\operatorname{Cone}(L)\to M$. It is also clear that the boundary $\bl: L\to M$ factors as $\bl = \bl^{uv}\circ h$, where $\bl^{uv}\colon x\to\operatorname{Cone}(L)\colon x\to \bl_x$. In particular, the boundary $\bl: L\to M$ of a Cartan homotopy $\bi: L\to M$ is always null-homotopic (by the above factorization, and since $\operatorname{Cone}(L)$ is acyclic).\end{remark}

\begin{example}\label{ex:3.2} Let $X$ be a complex manifold of dimension $n$, $L=KS_X$ the Kodaira-Spencer DG Lie algebra and $M=\End(A^{\ast,\ast}_X)$ the DG Lie algebra of endomorphisms 
of the de Rham complex, then the holomorphic Cartan formulas \eqref{equ.holomorphiccartanformulas} show that $\bi\colon KS_X\to\End(A^{\ast,\ast}_X)$, $\xi\mapsto\bi_\xi$, 
is a Cartan homotopy in the above sense. Moreover, $\bl_\xi$ is the holomorphic Lie derivative with respect to $\xi$ and thus it preserves the Hodge filtration on $A_X^{\ast,\ast}$; in particular, for all $0\leq i\leq n$ the boundary of $\bi$ factors through the inclusion $\chi_i\colon\End(A^{\ast,\ast}_X,A^{\geq i,\ast}_X)\to\End(A^{\ast,\ast}_X)$. The induced strict $L_\infty$ morphism 
$\mathcal{P}^i\colon KS_X\rh\cocone(\chi_i)$ is the algebraic model of the $i$th period map from Theorem~\ref{th:FMmodelperiod}.
\end{example}

The following definition, cf.  \cite{FMAJ}, generalizes  the previous example.
\begin{definition} A \emph{formal period data} is the data of a DG space $(V,d)$, a DG subspace $W\subset V$, a DG-Lie algebra $L$ and a Cartan homotopy $\bi:L\rh\End(V)$ such that the boundary $\bl$ factors through the inclusion $i\colon \End(V,W)\to\End(V)$, where $\End(V,W):=\{f\in\End(V)\,\mbox{ s.t. }\, f(W)\subset W\,\}$. We call the strict $L_\infty$ morphism $L\to\cocone(i)$, $x\mapsto (\bl_x,s\bi_x)$, the formal period map associated to the formal period data. A \emph{split formal period data} is a formal period data together with the choice of a graded subspace $A\subset V$ such that $V=W\oplus A$ as graded spaces.
\end{definition}

Given a split formal period data as above, we denote by $P\colon V\to A$ the projection with kernel $W$ and by $P^\bot=\id_V-P$:  then $\widetilde{P}\colon\End(V)\to\End(V)\colon f\to PfP^\bot$ is a projection with kernel $\End(V,W)$ and image which we may (and will) identify with $\Hom^*(W,A)$. The decomposition $\End(V)=\End(V,W)\oplus\Hom^*(W,A)$ satisfies the hypotheses of Theorem~\ref{thm.derivedproducts}, thus there is a DG associative algebra structure on $\Hom^*(W,A)[-1]$ with differential $\delta(sf)=-s(P[d,f]P^\bot)$ and product $sf\cup sg =s(fdg)$: the associated DG Lie algebra is a model for the homotopy fiber of the inclusion $i$. The symmetrization of the morphism $G_\infty$ of Theorem~\ref{thm.derivedproducts} gives 
an $L_\infty[1]$ quasi-isomorphism from the Fiorenza-Manetti mapping cocone $\cocone(i)[1]$ to $\Hom^*(W,A)$, and the composition of this morphism and the formal period map is the following $L_\infty[1]$ morphism:
\begin{equation}\label{eq:splitperiodmap}\Pi_\infty=(\pi_{1},\ldots,\pi_{k},\ldots)\colon L[1]\rh \Hom^*(W,A),
\end{equation}
\begin{multline}\label{equ.splitperiodmap} \pi_{k}(s^{-1}x_1\odot\cdots\odot s^{-1}x_k)=\\ = \sum_{\sigma\in S_k}\varepsilon(\sigma)\sum_{i_1+\cdots+i_j=k}\frac{(-1)^{k+j}}{i_1!\cdots i_j!} P\bi_{x_{\sigma(1)}}\cdots\bi_{x_{\sigma(i_1)}}P\cdots P\bi_{x_{\sigma(k-i_j+1)}}\cdots\bi_{x_{\sigma(k)}}P^\bot
\end{multline}
We shall call the map \eqref{equ.splitperiodmap} the \emph{split formal period map} associated to the split formal period data.

\begin{example}\label{ex:frompermap} The formal period data in Example~\ref{ex:3.2} splits canonically $A_X^{\ast,\ast}=A_X^{\geq i,\ast}\oplus A_X^{<i,\ast}$. We want to compute the associated split formal period map 
$\Pi_\infty\colon KS_X[1]\rh\Hom^*( A_X^{\geq i,\ast}, A_X^{<i,\ast})$. 
To apply the previous formula for  $\pi_{k}(s^{-1}\xi_1\odot\cdots\odot s^{-1}\xi_k)$ we consider separately the various components 
$\pi_{k}(s^{-1}\xi_1\odot\cdots\odot s^{-1}\xi_k)\colon A_X^{i+j,\ast}\rh A_X^{i+j-k,\ast}$,  where $0\leq j <\min(k,n-i+1)$ (obviously $\pi_{k}(s^{-1}\xi_1\odot\cdots\odot s^{-1}\xi_k)$ vanishes when $j$ is not in this range), 
\begin{multline*} \pi_{k}(s^{-1}\xi_1\odot\cdots\odot s^{-1}\xi_k)=\sum_{\sigma\in S_k}\varepsilon(\sigma)\left( \sum_{i_1+\cdots+i_h=k,\,i_h>j}\frac{(-1)^{h+k}}{i_1!\cdots i_h!}\right)\bi_{\xi_{\sigma(1)}}\cdots\bi_{\xi_{\sigma(k)}}=\\=k!\left(\frac{(-1)^{k+1}}{k!}+\sum_{i_h=j+1}^{k-1}\frac{(-1)^{i_h+1}}{i_h!}\sum_{i_1+\cdots+i_{h-1}=k-i_h}\frac{(-1)^{(h-1)+(k-i_h)}}{i_1!\cdots i_{h-1}!}\right)\bi_{\xi_1}\cdots\bi_{\xi_k}. 
\end{multline*}
and then, according to \eqref{eq:sumper}, 
\[\begin{split}
\pi_{k}(s^{-1}\xi_1\odot\cdots\odot s^{-1}\xi_k)&=
\left(-k!\sum_{i_h=j+1}^k \frac{(-1)^{i_h}}{i_h!(k-i_h)!}\right)\bi_{\xi_1}\cdots\bi_{\xi_k}\\ 
&=\left(\sum_{h=0}^{j}(-1)^{h}\binom{k}{h}\right) \bi_{\xi_1}\cdots\bi_{\xi_k}.
\end{split}\]
\end{example}

For  $i=n$,  the computation of Example~\ref{ex:frompermap}  gives:

\begin{theorem}\label{th:modelperiod2} Let $X$ be a complex manifold of dimension $n$. Then the 
$L_\infty[1]$ morphism 
\[\Pi_\infty=(\pi_1,\ldots,\pi_k,\ldots)\colon KS_X[1]\rh\Hom^*(A_X^{n,\ast},A_X^{<n,\ast}),\] 
with  Taylor coefficients  
\[\pi_{k}(s^{-1}\xi_1\odot\cdots\odot s^{-1}\xi_k)=\bi_{\xi_1}\cdots\bi_{\xi_k}\colon A_X^{n,\ast}\to A_X^{n-k,\ast},\qquad 0<k\le n,\] 
is an algebraic model of the $n$th period map.
\end{theorem}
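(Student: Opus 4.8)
The plan is to obtain the statement as the special case $i=n$ of the split formal period map construction of Example~\ref{ex:frompermap}, and then to recall why the resulting $L_\infty[1]$ morphism is a model of the period map. First I would set up the split formal period data of Example~\ref{ex:3.2}: the DG space $V=A^{*,*}_X$, the Kodaira--Spencer DG-Lie algebra $L=KS_X$, and the Cartan homotopy $\bi\colon KS_X\to\End(A^{*,*}_X)$, $\xi\mapsto\bi_\xi$, whose boundary $\bl_\xi=[\de,\bi_\xi]$ preserves the Hodge filtration by the holomorphic Cartan formulas \eqref{equ.holomorphiccartanformulas}. The key observation is that, since $\dim X=n$, there are no forms of holomorphic degree $>n$, so the subcomplex $W:=A^{\ge n,*}_X$ coincides with $A^{n,*}_X$; the canonical complement is $A:=A^{<n,*}_X$, and hence the target $\Hom^*(W,A)$ of the split formal period map \eqref{eq:splitperiodmap} is precisely $\Hom^*(A^{n,*}_X,A^{<n,*}_X)$.

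The second step is the computation of the Taylor coefficients $\pi_k$ from formula \eqref{equ.splitperiodmap}. Restricting to the source $W=A^{n,*}_X$, each contraction $\bi_\xi$ lowers the holomorphic degree by exactly one, so after applying any nonempty product of contractions one is already inside $A^{<n,*}_X=A$, where the interior projections $P$ act as the identity, and $P^\bot$ restricts to the identity on $W$. Thus on $A^{n,*}_X$ the morphism \eqref{equ.splitperiodmap} collapses to $\big(\sum_{i_1+\cdots+i_j=k}\tfrac{(-1)^{k+j}}{i_1!\cdots i_j!}\big)\sum_{\sigma\in S_k}\varepsilon(\sigma)\,\bi_{\xi_{\sigma(1)}}\cdots\bi_{\xi_{\sigma(k)}}$; the scalar equals $\tfrac1{k!}$ by the identity \eqref{eq:sumper}, while the operators $\bi_{\xi_i}$ have degree $0$ and commute with the Koszul sign, so $\sum_{\sigma\in S_k}\varepsilon(\sigma)\,\bi_{\xi_{\sigma(1)}}\cdots\bi_{\xi_{\sigma(k)}}=k!\,\bi_{\xi_1}\cdots\bi_{\xi_k}$. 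This yields $\pi_k(s^{-1}\xi_1\odot\cdots\odot s^{-1}\xi_k)=\bi_{\xi_1}\cdots\bi_{\xi_k}\colon A^{n,*}_X\to A^{n-k,*}_X$, which vanishes automatically for $k>n$. Equivalently, one may simply quote the last display of Example~\ref{ex:frompermap}: for $i=n$ the only source component that survives is $j=0$, and there the binomial sum $\sum_{h=0}^{0}(-1)^h\binom{k}{h}$ is equal to $1$.

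The third step is to record why $\Pi_\infty$ \emph{models} the $n$th period map. By construction $\Pi_\infty$ is the composition of the strict $L_\infty$ morphism $\mathcal{P}^n\colon KS_X\to\cocone(\chi_n)$ of Theorem~\ref{th:FMmodelperiod} with the symmetrization of the $A_\infty[1]$ quasi-isomorphism $G_\infty$ furnished by Theorem~\ref{thm.derivedproducts} for the decomposition $\End(A^{*,*}_X)=\End(A^{*,*}_X;A^{\ge n,*}_X)\oplus\Hom^*(A^{n,*}_X,A^{<n,*}_X)$, whose complementary summand $\Hom^*(A^{n,*}_X,A^{<n,*}_X)$ is square-zero, as that theorem requires. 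Since $\operatorname{sym}(G_\infty)$ is an $L_\infty$ quasi-isomorphism, $\Pi_\infty$ represents in the homotopy category the same morphism as $\mathcal{P}^n$, i.e.\ the $n$th period map, and in particular it induces the $n$th period map on deformation functors.

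I do not expect a real obstacle: the entire content is the degeneration, for $i=n$, of the general split formal period map to a pure iterated contraction. The only points requiring a little care are the identification $A^{\ge n,*}_X=A^{n,*}_X$ with the consequent collapse of all the projections $P$ and $P^\bot$, and the sign bookkeeping behind $\sum_{\sigma\in S_k}\varepsilon(\sigma)\,\bi_{\xi_{\sigma(1)}}\cdots\bi_{\xi_{\sigma(k)}}=k!\,\bi_{\xi_1}\cdots\bi_{\xi_k}$; both are routine with the conventions already in place. It is worth remarking that this collapse is exactly what makes the $n$th period map model so much simpler than the harmonic-form models obtained elsewhere, whose higher Taylor coefficients unavoidably involve Green's operators.
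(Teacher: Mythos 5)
Your proposal is correct and follows the paper's own route: the paper proves Theorem~\ref{th:modelperiod2} precisely by specializing the split formal period map of Example~\ref{ex:frompermap} to $i=n$, where only the component $j=0$ survives and the binomial sum reduces to $1$, and the justification that the result models the $n$th period map is exactly the composition $\operatorname{sym}(G_\infty)\circ\mathcal{P}^n$ you describe. The only nit is your aside that the operators $\bi_{\xi_i}$ ``have degree $0$'' --- for $\xi_i\in A^{0,q}_X(\Theta_X)$ the operator $\bi_{\xi_i}$ has degree $q-1$ --- but since $[\bi_\eta,\bi_\mu]=0$ as a graded commutator and $|\bi_{\xi_i}|=|s^{-1}\xi_i|$ in $KS_X[1]$, the cancellation against the Koszul sign yields $\sum_{\sigma}\varepsilon(\sigma)\,\bi_{\xi_{\sigma(1)}}\cdots\bi_{\xi_{\sigma(k)}}=k!\,\bi_{\xi_1}\cdots\bi_{\xi_k}$ exactly as you conclude.
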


Suppose now that $X$ is a compact K\"ahler manifold; as above  we denote by 
$H_X^{\ast,\ast}$ the space of harmonic forms, by 
$\imath\colon H_X^{\ast,\ast}\rh A_X^{\ast,\ast}$ and $\pi\colon A_X^{\ast,\ast}\rh H_X^{\ast,\ast}$ the inclusion and the harmonic projection and by $h=-\debar^\ast G_\debar$ the $\debar$-propagator 
(Definition~\ref{def:propagator}). 
As usual we denote by $P\colon A_X^{\ast,\ast}\rh A_X^{<p,\ast}$ the projection with kernel $A_X^{\geq p,\ast}$. 
We have the following contraction
\begin{equation}\label{contr.per.map}
\xymatrix{\left(\Hom^*(H_X^{\geq p,\ast},H_X^{<p,\ast}), 0\right)\ar@<.4ex>[r]^-{i}&\left(\Hom^*(A^{\geq p,\ast}_X,A_X^{<p,\ast}),\delta\right)\ar@<.4ex>[l]^-{g_1}\ar@(ul,ur)[]^K},\qquad \delta(f)=Pdf-(-1)^{|f|}fd,
\end{equation}
\[  i(f)=\imath f\pi,\qquad g_1(f)=\pi f\imath,\qquad K(f)=hf+
(-1)^{|f|} \imath\pi f h\,.\]
In fact 
\[\begin{split}
(\delta K+K\delta)(f)&= \delta(hf+(-1)^{|f|}\imath\pi fh) + K(Pdf-(-1)^{|f|}fd)\\
&=Pdhf+(-1)^{|f|}(hf+(-1)^{|f|}\imath\pi fh)d+hPdf-(-1)^{|f|}hfd+\imath\pi fdh\\
&=Pdhf+hPdf+\imath\pi f(dh+hd)\,.
\end{split}
\]
Next we notice that $dh+hd=\imath\pi-\id$ and $[P,h]=0$, thus $Phdf + hPdf = P(\imath\pi-\id)f=(\imath\pi-\id)f$ and
\[(\delta K+K\delta)(f)=(\imath\pi-\id)f + \imath\pi f(\imath\pi-\id)=\imath\pi f\imath\pi - f = ig_1(f)- f.\]
The side conditions $g_1K=K^2=Ki=0$ are also easily verified.

\begin{proposition}\label{prop.transfertoharmonic}
The homotopy transfer along the contraction \eqref{contr.per.map} induces: the trivial $L_\infty[1]$ structure on $\Hom^*(H_X^{\geq p,\ast},H_X^{<p,\ast})$, the $L_\infty[1]$ quasi-isomorphism 
\[G=(g_1,\ldots,g_n,\ldots)\colon \Hom^*(A_X^{\geq p,\ast},A_X^{<p,\ast})\to \Hom^*(H_X^{\geq p,\ast},H_X^{<p,\ast}),\]
\[g_k(f_1\odot\cdots\odot f_k)=\sum_{\sigma\in S_k}\varepsilon(\sigma)\pi f_{\sigma(1)}h\de f_{\sigma(2)}h\de\cdots h\de f_{\sigma(k)}\imath,\]
and the strict $L_\infty[1]$ quasi-isomorphism $i\colon \Hom^*(H_X^{\geq p,\ast},H_X^{<p,\ast})\to \Hom^*(A_X^{\geq p,\ast},A_X^{<p,\ast})$.
\end{proposition}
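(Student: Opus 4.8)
The plan is to obtain the three assertions directly from the homotopy transfer theorem of Section~\ref{sec:ooalgebras} applied to the contraction \eqref{contr.per.map}. By the discussion preceding the statement, the $L_\infty[1]$ structure on the source $\Hom^*(A_X^{\geq p,*},A_X^{<p,*})$ is the symmetrization of the DG associative algebra structure $(Pd,\Phi(d)_2,0,\ldots)$ produced by Theorem~\ref{thm.derivedproducts} from the split $A_X^{*,*}=A_X^{\geq p,*}\oplus A_X^{<p,*}$; since homotopy transfer commutes with the symmetrization functor (Remark~\ref{rem:sym}) it is enough to transfer this $A_\infty[1]$ structure and symmetrize at the end. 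The only nonzero Taylor coefficients being the differential $q_1=\delta$ and the binary derived product $q_2=\Phi(d)_2$, all homotopy transfer recursions collapse to sums over planar binary trees (for the transferred structure and for the quasi-isomorphism into the big algebra) or over linear combs (for the quasi-isomorphism out of it).

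The one input from Hodge theory is the following vanishing. Writing $\Phi(d)_2(c_1\otimes c_2)=\pm\,c_1\,(P^\bot d)\,c_2$, where $P^\bot d$ denotes the component of $d$ sending $A_X^{<p,*}$ into $A_X^{\geq p,*}$, this product is zero as soon as $c_1$ or $c_2$ lies in the image of $i(\cdot)=\imath(\cdot)\pi$: on a compact K\"ahler manifold harmonic forms are both $\de$- and $\debar$-closed, hence $d\imath=0$ (which kills the summand with $c_2\in\Image i$) and $\pi d=0$, so $\pi\,(P^\bot d)=0$ (which kills the summand with $c_1\in\Image i$). The same two facts give $\delta i=0$ and $g_1\delta=0$, so that \eqref{contr.per.map} is indeed a contraction onto a complex with zero differential.

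Triviality of the transferred structure and strictness of $i$ are now formal. The transfer formula gives $f_2=Kq_2(i\otimes i)=0$, and since $F^2_k=\sum_{a+b=k}f_a\otimes f_b$ with $f_1=i$, an immediate induction yields $f_k=0$ for all $k\geq2$; hence the induced quasi-isomorphism from $\Hom^*(H_X^{\geq p,*},H_X^{<p,*})$ into $\Hom^*(A_X^{\geq p,*},A_X^{<p,*})$ is exactly the strict morphism $i$, which is a quasi-isomorphism because $g_1i=\id$ and $ig_1\simeq\id$. Then $r_k=g_1q_2F^2_k=0$ for every $k\geq2$ and $r_1=0$ by $\delta i=0$, so the transferred $L_\infty[1]$ structure on $\Hom^*(H_X^{\geq p,*},H_X^{<p,*})$ is trivial.

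For $G$ one unwinds the recursion $g_k=\sum_{j=1}^{k-1}g_jQ^j_kK_k$: only $Q^{k-1}_k$, built from $q_2$, is nonzero, and the side conditions $Ki=0$ and $K^2=0$ force all but one summand to vanish — any $f_1g_1$ coming from $K_k$ dies at the first $q_2$ it meets (by $Ki=0$ and the vanishing above), and any adjacent pair other than the rightmost one in $Q^{k-1}_k$ leaves an untouched $K(\cdot)$ that is destroyed by $K^2=0$ at the next stage. What survives is the single nested expression $g_1q_2(f_1\otimes Kq_2(f_2\otimes\cdots\otimes Kq_2(f_{k-1}\otimes Kf_k)\cdots))$, and using $q_2(u\otimes v)=\pm\,u\,(P^\bot d)\,v$, $K(w)=hw+(-1)^{|w|}\imath\pi wh$, together with $[\de,h]=0$ and $d\imath=0$ — which give $(P^\bot d)\circ K=h\circ(P^\bot d)$ — this telescopes to $\pm\,\pi f_1\,h\de f_2\,h\de\cdots h\de f_k\,\imath$. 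Symmetrizing over $S_k$ produces the formula in the statement. The only genuinely laborious step, which I would isolate, is keeping track of the Koszul and Bernoulli signs through this recursion and checking that they multiply to $+1$; every cancellation used above is a direct consequence of the K\"ahler identities $\de\imath=\pi\de=0$, $h\imath=\pi h=h^2=0$, $[\de,h]=0$ recalled above.
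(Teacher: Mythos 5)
Your proposal is correct and follows essentially the same route as the paper's proof: transfer the $A_\infty[1]$ structure $(q_1,q_2,0,\ldots)$ and symmetrize, use $d\imath=\pi d=0$ to kill $q_2$ on anything in the image of $i$ (whence triviality of the transferred structure and strictness of the inclusion), and collapse the recursion for $G$ via the side conditions $Ki=K^2=0$ and $[\de,h]=0$ to the single telescoping expression $\pi f_1h\de f_2\cdots h\de f_k\imath$; the paper organizes the last step as an induction with the closed form at each stage rather than one fully unwound nest, but the cancellations invoked are the same. (Only cosmetic quibble: no Bernoulli numbers enter this transfer, since the source has only $q_1,q_2$.)
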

\begin{remark} One should notice that in the above formula the $f_j\in\Hom^*(A_X^{\geq p,\ast},A_X^{<p,\ast})$ are identified with endomorphisms $f_j\in\End(A_X^{\ast,\ast})$ such that $\operatorname{Im}(f_j)\subset A_X^{<p,\ast}\subset \operatorname{Ker}(f_j)$. The same remark applies to the following computations.\end{remark}

\begin{proof} In the computation of the homotopy transfer we take advantage of the fact that the $L_\infty[1]$ structure on $\Hom^*(A_X^{\geq p,\ast},A_X^{<p,\ast})$ is the symmetrized of the $A_\infty[1]$ structure $(q_1,q_2,0,\ldots,0,\ldots)$, \[q_2(f_1\otimes f_2)=(-1)^{|f_1|+1}f_1df_2=(-1)^{|f_1|+1}f_1\de f_2,\]
where the last equality follows by the previous remark. For the moment we denote the $A_\infty[1]$ structure induced on $\Hom^*(H_X^{\geq p,\ast},H_X^{<p,\ast})$ via homotopy transfer along \eqref{contr.per.map} by $(0,r_2,\ldots,r_k,\ldots)$, and we denote by 
$(i,i_2,\ldots,i_k,\ldots)\colon\Hom^*(H_X^{\geq p,\ast},H_X^{<p,\ast})\rh \Hom^*(A_X^{\geq p,\ast},A_X^{<p,\ast})$ 
the induced $A_\infty[1]$ quasi-isomorphism. Since $d\imath=\pi d=0$ we see that $q_2i^{\otimes 2}=0$, and thus $r_2=g_1q_2i^{\otimes2}=0=Kq_2i^{\otimes 2}=i_2$. Since $q_k=0$ for $k\geq3$ a straightforward induction shows that $i_k=r_k=0$ for all $k\geq2$. 
It remains to prove that the induced $A_\infty[1]$ quasi-isomorphism 
\[G=(g_1,\ldots,g_k,\ldots)\colon\Hom^*(A_X^{\geq p,\ast},A_X^{<p,\ast})\to\Hom^*(H_X^{\geq p,\ast},H_X^{<p,\ast})\] 
is given by
\[ g_k(f_1\otimes\cdots\otimes f_k)=\pi f_1h\de f_2h\de\cdots h\de f_k\imath,\]
that is, that the $g_k$ satisfy the recursive relation
\begin{equation}\label{eq:trans.sec.per}
g_k(f_1\otimes\cdots\otimes f_k)=g_{k-1}Q^{k-1}_kK_k\left(f_1\otimes\cdots\otimes f_k \right),
\end{equation}
where as usual $K_k=\sum_{j=0}^{k-1}\id^{\otimes j}\otimes K\otimes(ig_1)^{\otimes k-j-1}$. We first consider the case $k=2$: as  for all $f_1,f_2\in\Hom^*(A_X^{\geq p,\ast},A_X^{<p,\ast})$ we have $q_2(f_1\otimes\imath\pi f_2)=0$ it follows that $Q^1_2K_2(f_1\otimes f_2)=(-1)^{|f_1|}q_2(f_1\otimes K(f_2))$ and thus
\[g_2(f_1\otimes f_2)= (-1)^{|f_1|}g_1q_2(f_1\otimes (hf_2+(-1)^{|f_2|}\imath\pi f_2h))=-g_1(f_1\de h f_2)=\pi f_1h\de f_2\imath\,.\]
We suppose inductively that $g_1,\ldots, g_{k-1}$ are of the desired form: in particular for all $f\in\Hom^*(A_X^{\geq p,\ast},A_X^{<p,\ast})$ we have $g_{k-1}(\cdots\otimes \imath\pi f )=g_{k-1}(\cdots\otimes K(f))=0$ and thus the right hand side of \eqref{eq:trans.sec.per} becomes
\[\begin{split}
g_{k-1}Q^{k-1}_kK_k\left(f_1\otimes\cdots\otimes f_k \right)&=(-1)^{|f_{k-1}|}g_{k-1}(f_1\otimes\cdots\otimes f_{k-2}\otimes q_2(f_{k-1}\otimes K(f_k)))\\ 
&=g_{k-1}(f_1\otimes\cdots\otimes f_{k-2}\otimes(f_{k-1}h\de f_k))=\pi f_1h\de f_2h\de\cdots h\de f_k\imath.
\end{split}\]
\end{proof}

We are now able to present a third algebraic model of the $n$th period map with the nice property that the target $L_\infty$ algebra is minimal (in fact, it has the trivial $L_\infty$ algebra structure).

\begin{theorem}\label{th:modelperiodmap} The $L_\infty[1]$ morphism 
$\mathcal{P}_\infty=(p_1,\ldots,p_k,\ldots)\colon KS_X[1]\to\Hom^*(H_X^{ n,\ast},H_X^{<n,\ast})$, 
from the Kodaira-Spencer DG-Lie algebra to $\Hom^*(H_X^{n,\ast},H_X^{<n,\ast})$ with the trivial $L_\infty[1]$ structure, given in Taylor coefficients by
\begin{multline}\label{equ.minimalnperiodmap} 
p_{k}(s^{-1}\xi_1\odot\cdots\odot s^{-1}\xi_k)=\\=\sum_{j=1}^k\sum_{\sigma\in S(j,1,\ldots,1)}\varepsilon(\sigma)\pi\bi_{\xi_{\sigma(1)}}\bi_{\xi_{\sigma(2)}}\cdots\bi_{\xi_{\sigma(j)}}h\bl_{\xi_{\sigma(j+1)}}h\bl_{\xi_{\sigma(j+2)}}\cdots h\bl_{\xi_{\sigma(k)}}\imath,
\end{multline}
is an algebraic model of the $n$th period map.
\end{theorem}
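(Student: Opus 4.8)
The plan is to realise $\mathcal{P}_\infty$ as the composition of two $L_\infty[1]$-morphisms already constructed in the paper, and then to identify the Taylor coefficients of this composition with \eqref{equ.minimalnperiodmap}. Since $A_X^{n,*}=A_X^{\ge n,*}$, Proposition~\ref{prop.transfertoharmonic} with $p=n$ provides the $L_\infty[1]$ quasi-isomorphism $G$ from $\Hom^*(A_X^{n,*},A_X^{<n,*})$ onto $\Hom^*(H_X^{n,*},H_X^{<n,*})$ with the trivial $L_\infty[1]$-structure, while Theorem~\ref{th:modelperiod2} provides the model $\Pi_\infty\colon KS_X[1]\to\Hom^*(A_X^{n,*},A_X^{<n,*})$ of the $n$th period map. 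I would then \emph{define} $\mathcal{P}_\infty:=G\circ\Pi_\infty$. Being a composition of $L_\infty[1]$-morphisms it is automatically an $L_\infty[1]$-morphism, and since $\Pi_\infty$ is an algebraic model of the $n$th period map and $G$ is a quasi-isomorphism, so is $\mathcal{P}_\infty$; its target carries the trivial structure. Thus the whole content of the statement is the explicit shape \eqref{equ.minimalnperiodmap} of the Taylor coefficients of $G\circ\Pi_\infty$.

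To extract these, I would write $(G\circ\Pi_\infty)_k=\sum_{j=1}^k g_j\circ(\Pi_\infty)^{j}_{k}$ using the standard formula for the matrix coefficients of a coalgebra morphism, insert the explicit Taylor coefficients $\pi_i(s^{-1}\xi_1\odot\cdots\odot s^{-1}\xi_i)=\bi_{\xi_1}\cdots\bi_{\xi_i}$ of $\Pi_\infty$ and $g_j(f_1\odot\cdots\odot f_j)=\sum_{\sigma\in S_j}\varepsilon(\sigma)\,\pi\, f_{\sigma(1)}h\de f_{\sigma(2)}h\de\cdots h\de f_{\sigma(j)}\,\imath$ of $G$, and use the first holomorphic Cartan identity $[\bi_\eta,\bi_\mu]=0$ to treat each product of contractions as symmetric. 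After this substitution the triple sum (over the partitions $i_1+\cdots+i_j=k$, over the corresponding unshuffles, and over the symmetrising permutations hidden in $g_j$) collapses — the factor $1/j!$ in the coalgebra-morphism formula being cancelled by the $S_j$-symmetrisation inside $g_j$ — into a sum, indexed by ordered set partitions $\{1,\dots,k\}=B_1\sqcup\cdots\sqcup B_j$ with all $B_l\neq\emptyset$, of operators of the form $\pi\,\bi_{B_1}\,h\de\,\bi_{B_2}\,h\de\cdots h\de\,\bi_{B_j}\,\imath$, where $\bi_{B_l}=\prod_{a\in B_l}\bi_{\xi_a}$.

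The last, and main, step is to rewrite this sum as \eqref{equ.minimalnperiodmap}. Here I would use the holomorphic Cartan identity in the form $\de\bi_\xi=\bl_\xi+\bi_\xi\de$ together with the Hodge relations $[\de,h]=0$ (hence $\de h=-h\de$), $\de^2=0$, $\de\,\imath=0$ and $\pi\de=0$: pushing each occurrence of $\de$ rightward through the contractions converts a factor $h\de\,\bi_{B_l}$ into a sum of terms $h\,\bi_{\eta_1}\cdots\bl_{\eta_{l'}}\cdots\bi_{\eta_m}$ plus a trailing $\de$, which is annihilated either by the next propagator ($\de h\de=-h\de^2=0$) or, at the right end, by $\imath$ ($\de\,\imath=0$), while the harmonic projection $\pi$ on the far left kills any term carrying a leading $\de$. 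Using in addition the commutation rule $\bi_\eta\bl_\mu=\bl_\mu\bi_\eta+\bi_{[\eta,\mu]}$, one then reorganises the surviving terms so that the contributions in which several contractions remain inserted between two consecutive propagators recombine, leaving exactly the indices parametrised by the $(j,1,\dots,1)$-unshuffles $S(j,1,\dots,1)$, with the Koszul signs matching. I expect this bookkeeping — reconciling the raw composition, which a priori produces blocks of arbitrary size between consecutive propagators, with the minimal ``one $\bl$ per $h$'' normal form of \eqref{equ.minimalnperiodmap} — to be by far the most delicate part; everything else is formal.
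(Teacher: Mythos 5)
Your overall strategy coincides with the paper's: define $\mathcal{P}_\infty:=G\circ\Pi_\infty$ with $G$ the transferred quasi-isomorphism of Proposition~\ref{prop.transfertoharmonic} (for $p=n$) and $\Pi_\infty$ the model of Theorem~\ref{th:modelperiod2}, then compute the Taylor coefficients of the composition. Up to and including the expansion of $(G\Pi_\infty)_k$ as a sum over ordered partitions into nonempty blocks, with the operator $\pi\,\bi_{B_1}h\de\,\bi_{B_2}h\de\cdots h\de\,\bi_{B_j}\,\imath$ attached to each partition, you are on the paper's track, and your final conversion $h\de\bi_{\xi}\rightsquigarrow h\bl_{\xi}$ for singleton blocks (using $\bl_\xi=[\de,\bi_\xi]$, $\de\imath=0$ and $\de h\de=0$) is exactly the paper's last step.

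The gap is in the ``main step''. You expect blocks of size $\ge 2$ in arbitrary positions to survive and to \emph{recombine}, via repeated use of $[\bi_\eta,\bl_\mu]=\bi_{[\eta,\mu]}$, into the $(j,1,\ldots,1)$ normal form. This is not what happens, and as described it would not close: the commutator manoeuvre generates terms involving $\bi_{[\eta,\mu]}$ which do not appear in \eqref{equ.minimalnperiodmap}, and you give no mechanism for their cancellation. The correct observation — which is the whole point of the paper's proof and is much simpler — is that every argument of $g_j$ is an element of $\Hom^*(A_X^{n,\ast},A_X^{<n,\ast})$, i.e.\ an endomorphism of $A_X^{\ast,\ast}$ supported on the single column $A_X^{n,\ast}$, while $h\de$ raises the holomorphic degree by exactly one. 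Reading $\pi f_{\sigma(1)}h\de f_{\sigma(2)}\cdots h\de f_{\sigma(j)}\imath$ from the right, a block $\bi_{\xi_{a_1}}\cdots\bi_{\xi_{a_m}}$ lands in $A_X^{n-m,\ast}$, so after one application of $h\de$ the next $f$ to the left annihilates it unless $m=1$. Hence $g_j$ vanishes whenever two or more entries lie in $\Hom^*(A_X^{n,\ast},A_X^{<n-1,\ast})$: every partition with a block of size $\ge2$ anywhere but the leftmost (i.e.\ $\pi$-adjacent) position contributes the zero operator, and the sum collapses outright to the $(j,1,\ldots,1)$ unshuffles with no recombination. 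Without this vanishing statement your ``delicate bookkeeping'' has no visible endpoint; with it, the proof reduces to the sign/coefficient check (the $1/(k-j)!$ against the surviving permutations of the singleton blocks) and the $h\de\bi\to h\bl$ substitution you already have.
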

\begin{proof} We have to show that the map $\mathcal{P}_\infty$ defined in 
\eqref{equ.minimalnperiodmap} is the composition of the model of the $n$th period map 
$\Pi_\infty\colon KS_X[1]\rh\Hom^*(A_X^{n,\ast},A_X^{<n,\ast})$, 
$\pi_k(s^{-1}\xi_1\odot\cdots\odot s^{-1}\xi_k)=\bi_{\xi_1}\cdots\bi_{\xi_k}$, 
of Theorem~\ref{th:modelperiod2} and the $L_\infty[1]$ quasi-isomorphism 
$G\colon\Hom^*(A_X^{n,\ast},A_X^{<n,\ast})\rh \Hom^*(H_X^{n,\ast},H_X^{<n,\ast})$ 
of Proposition~\ref{prop.transfertoharmonic}. 

First of all we notice that $g_k(f_1\odot\cdots\odot f_k)=0$ whenever at least  two entries $f_i$, $f_j$ belong to the subspace $\Hom^*(A_X^{n,\ast},A_X^{<n-1,\ast})\subset\Hom^*(A_X^{n,\ast},A_X^{<n,\ast})$. Thus the composition $G\Pi_\infty$ is given by
\[\begin{split}\sum_{i=1}^k\frac{1}{i!}&\sum_{j_1+\cdots+j_i=k}\sum_{\sigma\in S(j_1,\ldots,j_i)}\varepsilon(\sigma)g_i(\bi_{\xi_{\sigma(1)}}\cdots\bi_{\xi_{\sigma(j_1)}}\odot\cdots\odot\bi_{\xi_{\sigma(k-j_i+1)}}\cdots\bi_{\xi_{\sigma(k)}})=\\
&=\sum_{j=1}^{k}\sum_{\sigma\in S(j,1,\ldots,1)}\varepsilon(\sigma)\frac{1}{(k-j)!}g_{k-j+1}(\bi_{\xi_{\sigma(1)}}\cdots\bi_{\xi_{\sigma(j)}}\odot \bi_{\xi_{\sigma(j+1)}}\odot\cdots\odot \bi_{\xi_{\sigma(k)}})\\
&=\sum_{j=1}^k\sum_{\sigma\in S(j,1,\ldots,1)}\varepsilon(\sigma)\pi\bi_{\xi_{\sigma(1)}}\cdots\bi_{\xi_{\sigma(j)}}h\de\bi_{\xi_{\sigma(j+1)}}h\de \cdots h\de\bi_{\xi_{\sigma(k)}}\imath\\ 
&=\sum_{j=1}^k\sum_{\sigma\in S(j,1,\ldots,1)}\varepsilon(\sigma)\pi\bi_{\xi_{\sigma(1)}}\cdots\bi_{\xi_{\sigma(j)}}h\bl_{\xi_{\sigma(j+1)}}h\cdots h\bl_{\xi_{\sigma(k)}}\imath=p_k(s^{-1}\xi_1\odot\cdots\odot s^{-1}\xi_k)\,.
\end{split}\]
To justify the last passage we claim that $h\de\bi_{\xi_1}h\de\cdots h\de\bi_{\xi_k}\imath=h\bl_{\xi_1}h\cdots h\bl_{\xi_k}\imath$ for all $k\geq1$ and $\xi_1,\ldots,\xi_k\in KS_X$. In fact, since $\de\imath=\de h\de=0$, we have
\[\begin{split} 
h\bl_{\xi_1}h\cdots h\bl_{\xi_k}\imath &= h\bl_{\xi_1}h\cdots h\bl_{\xi_{k-1}}h(\de\bi_{\xi_k}\mp\bi_{\xi_k}\de)\imath\\ 
&= h\bl_{\xi_1}h\cdots h(\de\bi_{\xi_{k-1}}\mp\bi_{\xi_{k-1}}\de)h\de\bi_{\xi_k}\imath = \cdots \\
&= h(\de\bi_{\xi_1}\mp\bi_{\xi_1}\de)h\de\bi_{\xi_2}h\de\cdots h\de\bi_{\xi_{k-1}}h\de\bi_{\xi_k}\imath =  h\de\bi_{\xi_1}h\de\cdots h\de\bi_{\xi_k}\imath\,.\end{split}\]
\end{proof}

\begin{remark} Taking the composition of the morphism $\sP_{\infty}$ of Theorem~\ref{th:modelperiodmap}
with the natural projection $\Hom^*(H_X^{ n,\ast},H_X^{<n,\ast})\to \Hom^*(H_X^{ n,\ast},H_X^{n-1,\ast})$ 
we recover the $L_{\infty}[1]$ morphism introduced in \cite{CCK} for proving that 
the obstructions to deformations of $X$ are annihilated by the map
\[ \bi\colon H^2(X,\Theta_X)\to \prod_i\Hom(H^i(X,\Omega_X^n),H^{i+2}(X,\Omega_X^{n-1}))\,.\] 

Taking the composition of the morphism $\sP_{\infty}$ of Theorem~\ref{th:modelperiodmap}
with the natural projection $\Hom^*(H_X^{ n,\ast},H_X^{<n,\ast})\to \Hom^*(H_X^{n,\ast},H_X^{0,\ast})$ 
we get the  $L_\infty[1]$ morphism 
\[\Phi=(\phi_1,\ldots,\phi_k,\ldots)\colon KS_X[1]\to\Hom^*(H_X^{n,\ast},H_X^{0,\ast}),\] 
where $\phi_k=0$ for $k<n$ and 
\[\phi_{k}(s^{-1}\xi_1\odot\cdots\odot s^{-1}\xi_k)=\sum_{\sigma\in S(n,1,\ldots,1)}\varepsilon(\sigma)\pi\bi_{\xi_{\sigma(1)}}\bi_{\xi_{\sigma(2)}}\cdots\bi_{\xi_{\sigma(n)}}h\bl_{\xi_{\sigma(j+1)}}h\bl_{\xi_{\sigma(j+2)}}\cdots h\bl_{\xi_{\sigma(k)}}\imath
\]
for $k\ge n$, which is the (formal, pointed) analog of the Yukawa potential function 
(see e.g. \cite{VoisinSNS}) for an arbitrary compact K\"{a}hler manifold.
\end{remark}

\bigskip
\section{Semidirect products of $L_\infty[1]$ algebras  and homotopy pull-backs}
\label{sec.semidirect}

It is known (see \cite{Hin,Pri}) that the category of DG cocommutative conilpotent coalgebras over a field $\K$ (of characteristic zero, as always) carries a model category structure, such that the fibrant objects are precisely the $L_\infty[1]$ algebras. In particular, given a pair of $L_\infty[1]$ morphisms $L\to M$ and $N\to M$ we can form their homotopy fiber product $L\times^h_M N$ via standard model categorical techniques. The aim of this section is to present, under some assumptions, an explicit construction of homotopy fiber products, based on the theory of $L_\infty$ extensions developed in  \cite{ChLaz2,methazambon}, and on Voronov's theory of higher derived brackets \cite{voronov,voronov2,derived,bordemann}.

Let $(L,q_1,\ldots,q_k,\ldots)$ be an $L_{\infty}[1]$ algebra and $I\subset L$ an $L_\infty[1]$ ideal, i.e.,
a graded subspace $I\subset L$ such that  $q_k(I\otimes L^{\odot k-1})\subset I$ for every  $k\geq1$. 
Then there is a unique induced $L_\infty[1]$ algebra structure on the quotient $L/I$ such that the projection 
$L\to L/I$ is a strict morphism of $L_\infty[1]$ algebras.

We extend the usual  construction of semidirect products of Lie algebras 
to $L_{\infty}[1]$ algebras, following \cite{ChLaz2,methazambon}.  
Let $(M,r_1,\ldots,r_n,\ldots)$ and  $(I,q_1,\ldots,q_n,\ldots)$ be two $L_\infty[1]$ algebras: we denote by 
$\CE(I)$ the Chevalley-Eilenberg DG-Lie algebra of $I$, and consider an $L_\infty[1]$ morphism  
\[\phi=(\phi_1,\ldots,\phi_k,\ldots)\colon M\to\CE(I)[1]\]
The semidirect product $I\rtimes_{\phi} M$ is the graded vector space $I\times M$,  
equipped with the $L_\infty[1]$ algebra structure defined in Taylor coefficients $\widetilde{q}_k\colon(I\times M)^{\odot k}\to I\times M$ by:
\[\widetilde{q}_k(i_1\odot\cdots\odot i_k)=(q_k(i_1\odot\cdots\odot i_k),0),\]
\[ \widetilde{q}_j(m_1\odot\cdots\odot m_j)=(s\phi_j(m_1\odot\cdots\odot m_j)_0(1),r_j(m_1\odot\cdots\odot m_j)),\]
\[ \widetilde{q}_{j+k}(m_1\odot\cdots\odot m_j\otimes i_1\odot\cdots\odot i_k)=(s\phi_j(m_1\odot\cdots\odot m_j)_k(i_1\odot\cdots\odot i_k),0),\]
where $s\phi_j$ is the composition $M^{\odot j}\xrightarrow{\phi_j}\CE(I)[1]\xrightarrow{\,s\,}\CE(I)$. 
\begin{theorem}\label{thm.Linfinityactions} 
	In the above setup, the coderivation $\widetilde{Q}=(\widetilde{q}_1,\ldots,\widetilde{q}_n,\ldots)$ defines indeed an $L_\infty[1]$ algebra structure on $I\rtimes_{\phi} M$. Furthermore, the natural inclusion 
	$I\to I\rtimes_{\phi} M$ and projection $I\rtimes_{\phi} M\to M$ are strict $L_\infty[1]$ morphisms. Conversely, every 
	$L_\infty[1]$ algebra structure on $I\times M$ having these properties is the semidirect product 
	$I\rtimes_{\phi}M$ for a unique $L_\infty[1]$ morphism $\phi\colon M\to\CE(I)[1]$.
\end{theorem}

\begin{proof} For a proof we refer either to \cite[Prop. 3.5]{ChLaz2} or to the paper \cite{methazambon}.
\end{proof}

The above result can be used to give an explicit construction of homotopy fiber products in the category of 
$L_\infty[1]$ algebras and $L_\infty[1]$ morphisms: we remark  that this category 
is not complete, the problem being that in general the equalizer of two $L_\infty[1]$ morphisms may not exist. 
We follows essentially the argument used in Schuhmacher's thesis 
\cite{schuhmacher}. Recall that, by definition, a morphism of  $L_\infty[1]$ algebras 
$F=(f_1,f_2,\ldots)\colon L\rh M$ is a fibration  if its linear part $f_1$ is surjective.

\begin{lemma}\label{lem:fibrvsstrictfibr} Every fibration $N\to M$ of $L_\infty[1]$ algebras admits a factorization $N\rh\widetilde{N}\rh M$ where the first arrow is an $L_\infty[1]$ isomorphism and the second one is a strict fibration.\end{lemma}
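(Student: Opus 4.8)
The plan is to exhibit the factorization explicitly using the semidirect product description from Theorem~\ref{thm.Linfinityactions}. Given a fibration $F = (f_1, f_2, \ldots)\colon N \to M$ with $f_1$ surjective, the linear part fits into a short exact sequence of complexes $0 \to I \to N \xrightarrow{f_1} M \to 0$ where $I = \ker f_1$ is a graded subspace of $N$. The first step is to choose, once and for all, a graded linear section $\sigma\colon M \to N$ of $f_1$, so that as graded vector spaces $N \cong I \times M$ via $(i, m) \mapsto i + \sigma(m)$. Under this identification, transport the $L_\infty[1]$ structure of $N$ to the graded vector space $I \times M$; call this transported structure $\widetilde{N}$. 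By construction the map $N \to \widetilde{N}$ induced by the identity on underlying vector spaces is an $L_\infty[1]$ isomorphism (indeed, it is a strict isomorphism, its only nonzero Taylor coefficient being the linear one).

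**Next I would** check that $\widetilde{N}$, with the transported structure, has $I$ as an $L_\infty[1]$ ideal and that the projection $\widetilde{N} \to M$ onto the second factor is a strict $L_\infty[1]$ morphism. That $I$ is an ideal is automatic: $I = \ker f_1$ and $f_1$ is the linear part of an $L_\infty[1]$ morphism, so the higher Taylor coefficients $q_k$ of $N$ satisfy the coderivation compatibility forcing $q_k(\ker f_1 \otimes N^{\odot k-1}) \subseteq \ker f_1$ — this is exactly the statement that $\ker f_1$ is closed under the brackets, which follows from $F$ being a morphism of DG-coalgebras (the kernel of the linear part of an $L_\infty$ morphism is always an ideal). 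Hence the quotient $\widetilde{N}/I \cong M$ inherits the structure making the projection strict. The point where one must be slightly careful is that the section $\sigma$ need \emph{not} respect the differentials or higher brackets, so the projection $\widetilde{N} \to M$ being strict is not immediate from $\sigma$ alone; rather, it holds because the quotient $L_\infty[1]$ structure on $\widetilde{N}/I$ is the unique one making the projection strict, and one identifies $\widetilde{N}/I$ with $M$ by verifying the linear parts agree and then invoking uniqueness. In fact, since $F$ is an $L_\infty[1]$ morphism with linear part $f_1$ inducing the isomorphism $\widetilde{N}/I \cong M$, the induced structure on the quotient is precisely the original structure on $M$.

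**The main obstacle** I anticipate is the bookkeeping required to show that the projection $\widetilde{N}\to M$ is genuinely \emph{strict} rather than merely a (nonstrict) fibration — that is, that after transporting along $\sigma$ the higher Taylor coefficients $\widetilde{q}_k$ of $\widetilde{N}$ land in $I$ whenever at least one argument lies in $I$, and that otherwise their $M$-component reproduces exactly $r_k$, the brackets of $M$. The cleanest route is not to verify this by hand on Taylor coefficients but to observe that $\widetilde{N}$ carries $I$ as an ideal (established above), so by Theorem~\ref{thm.Linfinityactions} the structure on $\widetilde N$ \emph{is} a semidirect product $I \rtimes_\phi (\widetilde N/I)$ for a unique $\phi$, and the projection onto $\widetilde N / I$ is strict by the final sentence of that theorem; it then remains only to identify $\widetilde N / I$ with $M$ as $L_\infty[1]$ algebras, which follows since $f_1$ descends to an isomorphism of complexes and the $L_\infty$-morphism $F$ descends to an $L_\infty$-morphism of quotients whose linear part is invertible, hence an isomorphism. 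Thus $N \xrightarrow{\cong} \widetilde{N} \twoheadrightarrow M$ is the desired factorization, with the second map a strict fibration (its linear part is the projection $I \times M \to M$, visibly surjective).
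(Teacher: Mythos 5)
Your argument has a genuine gap at its central step: the claim that $I=\ker f_1$ is automatically an $L_\infty[1]$ ideal of $N$ (``the kernel of the linear part of an $L_\infty$ morphism is always an ideal'') is false for non-strict morphisms, and the non-strict case is precisely what the lemma is about. Writing out the component $V^{\odot 2}\to W$ of the relation $FQ=RF$ gives
\[
f_1q_2(v_1\odot v_2)=r_1f_2(v_1\odot v_2)+r_2(f_1v_1\odot f_1v_2)-f_2(q_1v_1\odot v_2)-(-1)^{|v_1|}f_2(v_1\odot q_1v_2)\,,
\]
so for $v_1\in\ker f_1$ only the $r_2$ term is guaranteed to vanish: whenever $f_2\neq0$ there is no reason for $q_2(v_1\odot v_2)$ to lie in $\ker f_1$. (The statement you want is true for \emph{strict} morphisms, which is exactly why the lemma has content.) The same issue reappears when you ask $F$ to ``descend to an $L_\infty$-morphism of quotients'': that would require every $f_k$, $k\geq2$, to vanish on $I\otimes N^{\odot k-1}$, which again fails in general. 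There is also a second, independent problem: your isomorphism $N\to\widetilde N$ is \emph{strict} (a linear change of coordinates via the section $\sigma$), so it cannot absorb the higher Taylor coefficients $f_2,f_3,\ldots$ of $F$. Even if the projection $\widetilde N\to M$ were strict, the composite $N\to\widetilde N\to M$ would be the strict morphism with Taylor coefficients $(f_1,0,0,\ldots)$, not the given $F$, so you would not have produced a factorization of $F$.

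The fix --- and this is what the paper does --- is to make the first arrow genuinely non-strict. Since $f_1$ is surjective one chooses maps $g_k\colon V^{\odot k}\to V$ with $g_1=\id$ and $f_1g_k=f_k$ for all $k\geq2$; because $g_1$ is invertible these assemble into a coalgebra automorphism $G=(g_1,g_2,\ldots)$ of $S(V)$, and one defines $\widetilde N$ as the transported structure making $G\colon N\to\widetilde N$ an $L_\infty[1]$ isomorphism. A direct check on Taylor coefficients then shows that $F$ equals the strict morphism with linear part $f_1$ precomposed with $G$, so the second arrow is the strict fibration $f_1$. Your section $\sigma$ and the semidirect-product description of Theorem~\ref{thm.Linfinityactions} only enter \emph{after} this step, when the strict fibration $\widetilde N\to M$ is decomposed as $I\rtimes_\phi M\to M$ in Proposition~\ref{prop:pullbackbyfibration}.
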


\begin{proof}
	This is proved in full details in \cite[Lemma 1.5.4]{schuhmacher}: for the reader convenience we give a
	sketch of the proof.   
	Denoting by $V$ and $W$ the underlying complexes of $N$ and $M$ respectively, and by $f_k\colon V^{\odot k}\to W$, $k>0$, the Taylor coefficients of the fibration, 
	since $f_1$ is surjective there exist a sequence of maps 
	$g_k\colon V^{\odot k}\to V$, $k>0$, such that $g_1=Id$ and $f_1g_k=f_k$ for every $k$.
	These maps induce an isomorphism of graded coalgebras $G=(g_1,\ldots)\colon S(V)\to S(V)$ and 
	we define $\widetilde{N}$ as the unique $L_\infty[1]$ structure on $V$ such that $G\colon N\to \widetilde{N}$ is an isomorphism of $L_\infty[1]$ algebras.
\end{proof}

\begin{proposition}\label{prop:pullbackbyfibration} 
	Given the $L_\infty[1]$ algebras $L$, $M$, $N$, an $L_\infty[1]$ morphism $F\colon L\to M$ and a fibration $G\colon N\to M$, the fiber product $L\times_M N$ exists in the category of $L_\infty[1]$ algebras and $L_\infty[1]$ morphisms.
\end{proposition}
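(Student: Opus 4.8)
The plan is to reduce the general case to the special case in which $G$ is a \emph{strict} fibration, and in that case to construct the pullback by hand as a sub-$L_\infty[1]$ algebra of the direct-sum-type object controlled by a semidirect product. First I would invoke Lemma~\ref{lem:fibrvsstrictfibr} to factor $G\colon N\to M$ as $N\xrightarrow{\cong}\widetilde N\xrightarrow{\bar G}M$ with the first arrow an $L_\infty[1]$ isomorphism and $\bar G$ a strict fibration; since isomorphisms are in particular weak equivalences and the universal property of a pullback is stable under replacing one leg by an isomorphic one, it suffices to build $L\times_M\widetilde N$. So from now on assume $G$ is strict, i.e.\ its only nonzero Taylor coefficient beyond the structure maps is the linear part $g_1\colon N\to M$, which is a surjective chain map with some kernel complex $I=\ker g_1$. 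Being an $L_\infty[1]$ ideal (strictness of $G$ forces $r_k(I\odot N^{\odot k-1})\subset I$), $I$ inherits an $L_\infty[1]$ structure, and by Theorem~\ref{thm.Linfinityactions} the algebra $N$ is itself a semidirect product $I\rtimes_\psi M$ for a unique $L_\infty[1]$ morphism $\psi\colon M\to\CE(I)[1]$, with $g_1$ the strict projection.

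Next I would form the composite $L_\infty[1]$ morphism $\phi=\psi\circ F\colon L\to\CE(I)[1]$ and consider the semidirect product $P:=I\rtimes_\phi L$. The underlying graded vector space is $I\times L$, and the strict projection $I\rtimes_\phi L\to L$ together with the strict inclusion $I\hookrightarrow I\rtimes_\phi L$ are $L_\infty[1]$ morphisms by Theorem~\ref{thm.Linfinityactions}. I claim $P$, equipped with the projection $\pi\colon P\to L$ and the map $\rho\colon P\to N$ given on underlying spaces by $(i,x)\mapsto(i,f_1(x))$ — more precisely the composite $I\rtimes_\phi L\xrightarrow{\,\id_I\rtimes F\,}I\rtimes_\psi M=N$ — is the desired fiber product. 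The point of running everything through the semidirect-product description is precisely that the "pullback of the action" $\phi=\psi F$ makes $\id_I\rtimes F$ a genuine (typically non-strict) $L_\infty[1]$ morphism: one checks directly from the explicit Taylor coefficients $\widetilde q_k$ of a semidirect product that applying $F$ in the $L$-slot and leaving the $I$-slot alone is compatible with $\widetilde Q$ on both sides, since the $I$-part of the bracket on $I\rtimes_\phi L$ was \emph{defined} by pulling back along $F$ the $I$-part of the bracket on $I\rtimes_\psi M$. Commutativity $G\circ\rho=F\circ\pi$ is then immediate on the nose, both composites being the strict morphism $(i,x)\mapsto F(x)$ at the level of the $M$-component and killing $I$.

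Finally I would verify the universal property. Given an $L_\infty[1]$ algebra $T$ with morphisms $a\colon T\to L$ and $b\colon T\to N$ such that $Ga=Fb$ (here $Ga,Fb\colon T\to M$), I want a unique $c\colon T\to P$ with $\pi c=a$, $\rho c=b$. Because $G=g_1$ is a strict surjection with a chosen section, the condition $Ga=Fb$ lets me correct $b$: write $b=(b^I,b^M)$ under the identification $N=I\times M$ of graded spaces; then $b^M$ is forced to equal $g_1\circ b=F_1\circ\cdots$ — more carefully, the $M$-component of $b$ is determined by $a$ through the equation $Ga=Fb$, so the only freedom in lifting is in the $I$-component, which we simply reuse. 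Concretely $c$ is the coalgebra morphism $S(T)\to S(I\times L)$ whose Taylor coefficients are $c_k=(b^I_k,a_k)$; one checks this is an $L_\infty[1]$ morphism by combining that $a$ is one into $L$, that $b$ is one into $I\rtimes_\psi M$, and the defining relation between $\phi$ and $\psi$, and uniqueness is clear since $\pi$ and $\rho$ jointly detect all Taylor coefficients (the $L$-slot from $\pi$, the $I$-slot from the $I$-component of $\rho$).

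The main obstacle I expect is the bookkeeping in the second and third paragraphs: showing that $\id_I\rtimes F$ is an $L_\infty[1]$ morphism and that the candidate $c$ is one amounts to unwinding the higher Taylor coefficients $\widetilde q_{j+k}$ of the two semidirect products and checking the $L_\infty[1]$ morphism identities term by term. This is conceptually straightforward given Theorem~\ref{thm.Linfinityactions} — indeed it is essentially the statement that $\CE(I)[1]$-valued morphisms, hence semidirect products over a fixed $I$, form a functor in the "base" variable contravariantly along $L_\infty[1]$ morphisms — but it is the only place where one must actually touch the combinatorics of unshuffles and Koszul signs. Everything else (the reduction to strict $G$ via Lemma~\ref{lem:fibrvsstrictfibr}, the identification $N\cong I\rtimes_\psi M$, and the formal deduction of the universal property once $\rho$ is known to be a morphism) is purely formal.
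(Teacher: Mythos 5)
Your proposal is correct and follows essentially the same route as the paper: reduce to a strict fibration via Lemma~\ref{lem:fibrvsstrictfibr}, identify $N$ as a semidirect product $I\rtimes_\psi M$ over the kernel ideal via Theorem~\ref{thm.Linfinityactions}, take $I\rtimes_{\psi F}L$ as the candidate pullback with the morphism $\id_I\rtimes F$, and verify the universal property by pairing the $I$-component of the given map to $N$ with the given map to $L$. The paper likewise leaves the term-by-term verification of the $L_\infty[1]$ morphism identities to the reader, so nothing essential is missing.
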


\begin{proof} According to Lemma~\ref{lem:fibrvsstrictfibr} it is not restrictive to assume that $G=g\colon 
	N\to M$ is a strict fibration;  thus the kernel $I:=\operatorname{Ker}(g)$ is an $L_\infty[1]$ ideal and therefore by Theorem~\ref{thm.Linfinityactions} we have $N=I\rtimes_{\phi}M$ for a well defined  
	$L_\infty[1]$ morphism $\phi\colon M\rh\CE(I)[1]$. Considering the composite morphism  
	$\psi=\phi\circ F\colon L\to\CE(I)[1]$, we claim that the  diagram 
	\[\xymatrix{ I\rtimes_{\psi} L\ar[r]^-{\widetilde{F}}\ar[d] & I\rtimes_{\phi} M\ar[d] \\ 
		L\ar[r]^-F & M}\]
	is  cartesian  in the category of 
	$L_\infty[1]$ algebras and $L_\infty[1]$ morphisms,
	where the vertical arrows are the projections (which are strict $L_\infty[1]$ morphisms) and $\widetilde{F}$ is given in Taylor coefficients by
	\[ \widetilde{f}_1(i,l)=(i,f_1(l)),\qquad\widetilde{f}_k((i_1,l_1)\odot\cdots\odot(i_k,l_k))=(0,f_k(l_1\odot\cdots\odot l_k))\,\,\,\mbox{for $k\geq2$}\,:\]
	it is straightforward to see that $\widetilde{F}$ is an $L_\infty[1]$ morphism  
	\cite[Prop. 1.2.4]{schuhmacher}. 
	Given a commutative diagram of $L_{\infty}[1]$ algebras
	\[\xymatrix{ X\ar[r]^-{H}\ar[d]_K & I\rtimes_{\phi} M\ar[d] \\ L\ar[r]^-F & M} \]
	it is easy to see that the morphism of graded coalgebras $S(X)\to S(I\rtimes_{\psi} L)$ 
	given in Taylor coefficients by $(p_Ih_j,k_j)\colon X^{\odot j}\to I\rtimes_{\psi} L$, 
	where $p_I\colon I\rtimes_\phi M\to I$ is the projection, is the only one making the required diagram commutative, and again one should check that this is an $L_\infty[1]$ morphism. We leave to the reader to fill in the details of the easy computations.
\end{proof}

\begin{remark} The claim of the previous proposition also follows from the results of \cite{Hin,Pri} and standard facts on
	model categories. On the other hand, we shall need the explicit construction in terms of semidirect products in what follows. We also point out that  Proposition~\ref{prop:pullbackbyfibration} is the only non-trivial step to prove  
that the category of $L_\infty[1]$ algebras and $L_\infty[1]$ morphisms is a pointed category of fibrant objects, as defined in \cite{kenbrown} (of course this already follows from the results in \cite{Hin,Pri}, but going this way one avoids many technical details). In fact, according to Brown's 
factorization lemma \cite[p. 421]{kenbrown}, in order to prove the key property that every $L_\infty[1]$ morphisms $f:L\to M$ can be factored into the composition $L\xrightarrow{\sim} N \twoheadrightarrow M$ of a weak equivalence and a fibration, it is sufficient to do so for the diagonal $M\to M\times M$. This is easy (the reason being that the diagonal is a \emph{strict} $L_\infty[1]$ morphism): for instance, we can take the factorization
\[ M \xrightarrow{\sim} M[t,dt]\twoheadrightarrow M\times M,\]
where the left hand side arrow sees an element $m\in M$ as a constant form in $M[t,dt]$, and the right hand side arrow evaluates a form at $t=0$ and $t=1$. The other axioms for a category of fibrant objects are straightforwardly verified. \end{remark} 

In particular, it makes sense the notion of homotopy fiber product 
$L\times^h_M N$ of a pair of $L_\infty[1]$ morphisms $F\colon L\to M$ and $G\colon N\to M$. 
Notice that it is properly defined only up to quasi-isomorphism and:
\begin{enumerate}
	
	\item if $G$ is a fibration we simply take the fiber product $L\times_M N$ as in 
	Proposition~\ref{prop:pullbackbyfibration}; in particular, if $G$ is a strict fibration then 
	$N=I\rtimes_\phi M$ for some $\phi\colon M\to\CE(I)[1]$ and therefore $I\rtimes_{\phi F}L$ is a model for 
	$L\times^h_M N$;
	
	\item if $G$ is not a fibration, consider any factorization $G\colon N\xrightarrow{i}\widetilde{N}\xrightarrow{\widetilde{G}}M$, with $i$ a quasi-iomorphism and $\widetilde{G}$ a fibration: then $L\times_M \widetilde{N}$
	is a model for $L\times^h_M N$.
\end{enumerate}

Based on the previous considerations, we shall give an explicit construction of a (small) model of the homotopy fiber product when one of the two morphisms is an inclusion of DG Lie algebras 
$i\colon N\to M$.
We fix a graded vector subspace $A\subset M$ such that $M=A\oplus N$, 
and denote by $d$ the differential on $M$ and by $P\colon M\to A$ the projection with kernel $N$. 
Since  the complex $(A,Pd)$ is a deformation retract of the (desuspended) mapping cocone $\cocone(i)[1]$ 
(see \eqref{contr}),  
homotopy transfer induces an $L_\infty[1]$ algebra structure $\phi(d)$ on $A$, which is a model for the homotopy fiber of the inclusion $i$. Moreover, according to \cite[Rem. 5.9]{derived} (cf. also \cite{bordemann}), 
there exists  a DG-Lie algebra morphism
\[\phi\colon (M,d,[-,-])\to(\CE(A),[\phi(d),-],[-,-])\] 
such that the map 
$N\to A[-1]\rtimes_{\phi} M$, $n\mapsto (0,n)$, is a strict quasi-isomorphism of 
$L_\infty$ algebras and thus the projection 
$A[-1]\rtimes_{\phi} M\to M$ is weakly equivalent to the inclusion $i\colon N\to M$. 
Hence, according to Proposition~\ref{prop:pullbackbyfibration}, for any $L_\infty$ morphism $L\to M$ the $L_\infty$ algebra 
$A[-1]\rtimes_{\phi F} L$ is a model for the homotopy pullback $L\times^h_{M}N$. Finally, explicit formulas for the $L_\infty[1]$ structure on $A$ and the morphism $\phi$ were given 
in \cite{derived}  under the additional assumption that $A\subset M$ is a graded Lie subalgebra (see \cite{bordemann} for the general case). In the particular case when $A\subset M$ is an \emph{abelian} graded Lie subalgebra, one recovers the $L_\infty[1]$ algebra structure on $A$ given by Voronov's second construction of higher derived brackets \cite{voronov2}
\[ \phi(d)_1(a)=Pda,\qquad\phi(d)_k(a_1\odot\cdots\odot a_k)=P[\cdots[da_1,a_2]\cdots,a_k]\,\,\mbox{ for }k\geq2,\]
and the morphism $\phi\colon M\to\CE(A)$, $m\mapsto (\phi(m)_0,\ldots,\phi(m)_k,\ldots)$, of DG-Lie algebras given by Voronov's first construction of higher derived brackets \cite{voronov}
\[\phi(m)_0(1)=Pm,\qquad \phi(m)_k(a_1\odot\cdots\odot a_k)=P[\cdots[m,a_1]\cdots,a_k]\,\,\mbox{ for }k\geq1. \]
In particular, if $A\subset M$ is an abelian graded Lie subalgebra, then    
$A[-1]\rtimes_\phi M$ is exactly the the $L_\infty$ algebra defined in \cite[Sec. 4, Thm. 2]{voronov2}. 
Summing together the results of this section we finally reach our goal, expressed by the following theorem:

\begin{theorem}\label{thm.modelfiberproduct} Let $(M,d,[-,-])$ be a DG-Lie algebra, $N\subset M$ a DG-Lie subalgebra  and $A\subset M$ an abelian graded Lie subalgebra such that $M=N\oplus A$ as graded vector spaces: denote by 
	$P\colon M\to A$ the projection with kernel $N$. 
	Given another DG-Lie algebra $L$ and an $L_\infty[1]$ morphism 
	$F=(f_1,\ldots,f_k,\ldots)\colon L[1]\to M[1]$, the $L_\infty[1]$ algebra 
	\[ (A\rtimes_{\phi F} L[1],q_1,\ldots,q_k,\ldots),\] 
	where the brackets are 
	\[\begin{split}q_1(a,s^{-1}x)&=\left( P(da + sf_1(s^{-1}x)),-s^{-1}dx \right),\\ 
	q_2(s^{-1}x_1\odot s^{-1}x_2)&=\left(  Psf_2(s^{-1}x_1\odot s^{-1}x_2 ),(-1)^{|x_1|}s^{-1}[x_1,x_2] \right),\\
	q_j(s^{-1}x_1\odot\cdots\odot s^{-1}x_j)&=\left(Psf_j(s^{-1}x_1\odot\cdots\odot s^{-1}x_j),0\right)\,\,j\geq3,\\
	q_k(a_1\odot\cdots\odot a_k)&=\left(P[\cdots[da_1,a_2]\cdots,a_k],0\right)\,\,k\geq2,\\
	q_{j+k}(s^{-1}x_1\odot\cdots\odot s^{-1}x_j \otimes a_1\odot\cdots\odot a_k) &=\left(P[\cdots[sf_j(s^{-1}x_1\odot\cdots\odot s^{-1}x_j) , a_1]\cdots,a_k],0\right)\,\, j,k\geq1,\end{split} \]
	(here $sf_j$ is the composition $L[1]^{\odot j}\xrightarrow{f_j}M[1]\xrightarrow{s}M$) is a model for the homotopy fiber product $L\times^h_M N$  
	.
\end{theorem}

\bigskip
\section{$L_\infty$ models for Yukawa algebras}

Let $X$ be a K\"ahler manifold of dimension $n\geq2$ with a fixed K\"ahler metric. In  Theorem \ref{th:modelperiodmap} we constructed an $L_\infty[1]$ model  
\[ \xymatrix{ &\Hom^*(H_X^{n,\ast},H_X^{0<\ast<n,\ast})\ar[d] \\ KS_X[1]\ar[r]^-{\mathcal{P}_\infty} & \Hom^*(H_X^{n,\ast},H_X^{<n,\ast})} \]
of the geometric diagram of pointed moduli spaces
\[ \xymatrix{&(\Grass(F^1H^*(X,\C)),F^nH^*(X,\C))\ar[d]\\
B\ar[r]^-{\sP^n}&(\Grass(H^*(X,\C)),F^nH^*(X,\C))}\]
where $B$ is the base of the semiuniversal deformation of $X$ and $\mathcal{P}^n$ the $n$th local period map. 
As explained in the introduction, if we want to replicate the geometric diagram 
\[ \xymatrix{Y_B\ar[r]\ar[d]&(\Grass(F^1H^*(X,\C)),F^nH^*(X,\C))\ar[d]\\
B\ar[r]^-{\sP^n}&(\Grass(H^*(X,\C)),F^nH^*(X,\C))}\]
in the category of $L_\infty$ algebras, the only thing that makes sense (according to the general philosophy of derived deformation theory) is to replace the above cartesian diagram of complex singularities by a homotopy cartesian diagram of $L_\infty[1]$ algebras
\[ \xymatrix{\Yuk_X[1]\ar[r]\ar[d] &\Hom^*(H_X^{n,\ast},H_X^{0<\ast<n,\ast})\ar[d] \\ KS_X[1]\ar[r]^-{\mathcal{P}_\infty} & \Hom^*(H_X^{n,\ast},H_X^{<n,\ast})} \]

\begin{definition} We shall denote a homotopy fiber product of the previous diagram of $L_\infty$ algebras by $\Yuk_X$ (thus, strictly speaking, $\Yuk_X$ denotes an object in the homotopy category of $L_\infty$ algebras). In particular the associated deformation functor $\Def_{\Yuk_X}\colon \Art\to\Set$ is well defined.\end{definition}

By a straightforward application of Theorem~\ref{thm.modelfiberproduct} we obtain the following concrete $L_\infty[1]$ model of $\Yuk_X[1]$.

\begin{theorem}\label{th:L_ooalgyukawa} The $L_\infty[1]$ algebra $(KS_X[1]\times\Hom^*(H_X^{n,\ast},H_X^{0,\ast})[-1],q_1,\ldots,q_k,\ldots)$, where the nontrivial brackets are $q_1(s^{-1}\xi,sf)=(s^{-1}\debar\xi,0)$,
\[ q_2(s^{-1}\xi_1\odot s^{-1}\xi_2)=\left\{\begin{array}{ll}
((-1)^{|\xi_1|}s^{-1}[\xi_1,\xi_2],s\pi\bi_{\xi_1}\bi_{\xi_2}\imath) & \mbox{if $n=2$,}\\
(-1)^{|\xi_1|}s^{-1}[\xi_1,\xi_2] & \mbox{if $n >2$,}
\end{array}\right. \]
$q_k=0$ for $2<k<n$ and finally
\begin{equation*}q_{n+k}(s^{-1}\xi_1\odot\cdots\odot s^{-1}\xi_{n+k})=\sum_{\sigma\in S(n,1,\ldots,1)}\varepsilon(\sigma)s\pi\bi_{\xi_{\sigma(1)}}\cdots\bi_{\xi_{\sigma(n)}}h\bl_{\xi_{\sigma(n+1)}}h\cdots h\bl_{\xi_{\sigma(n+k)}}\imath ,\end{equation*}
is a homotopy fiber product of the diagram 
\[ \xymatrix{&\Hom^*(H_X^{n,\ast},H_X^{0<\ast<n,\ast})\ar[d] \\ KS_X[1]\ar[r]^-{\mathcal{P}_\infty} & \Hom^*(H_X^{n,\ast},H_X^{<n,\ast})} \]
and thus a model of $\Yuk_X[1]$. \end{theorem}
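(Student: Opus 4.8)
The plan is to derive the statement as a direct instance of Theorem~\ref{thm.modelfiberproduct}, followed by an explicit evaluation of the bracket formulas given there. First I would fix the ingredients. By Theorem~\ref{th:modelperiodmap} the target of $\mathcal{P}_\infty$ is $\Hom^*(H_X^{n,\ast},H_X^{<n,\ast})$ with the \emph{trivial} $L_\infty[1]$ structure, so setting $M:=\Hom^*(H_X^{n,\ast},H_X^{<n,\ast})[-1]$ yields an abelian DG-Lie algebra with zero differential. The splitting of harmonic forms by holomorphic degree $H_X^{<n,\ast}=H_X^{0<\ast<n,\ast}\oplus H_X^{0,\ast}$ induces a decomposition $M=N\oplus A$, where $N:=\Hom^*(H_X^{n,\ast},H_X^{0<\ast<n,\ast})[-1]$ is the (desuspended) source of the right-hand vertical inclusion in the diagram and $A:=\Hom^*(H_X^{n,\ast},H_X^{0,\ast})[-1]$ is an abelian graded Lie subalgebra; the projection $P\colon M\to A$ with kernel $N$ is post-composition with the harmonic projection $H_X^{<n,\ast}\to H_X^{0,\ast}$. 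Taking $L:=KS_X$ and $F:=\mathcal{P}_\infty$, all hypotheses of Theorem~\ref{thm.modelfiberproduct} hold, so the $L_\infty[1]$ algebra $A\rtimes_{\phi F}L[1]=KS_X[1]\times\Hom^*(H_X^{n,\ast},H_X^{0,\ast})[-1]$ is a model for the homotopy fiber product, i.e.\ for $\Yuk_X[1]$.

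It then remains to read off the brackets. Because $M$ has vanishing differential and vanishing bracket, every term in the formulas of Theorem~\ref{thm.modelfiberproduct} involving $d_M$ or an iterated $M$-bracket disappears, so the only surviving contributions are $P$ applied to the Taylor coefficients $f_k=p_k$ of $\mathcal{P}_\infty$ (plus, in $q_1$ and $q_2$, the intrinsic Kodaira--Spencer differential $-s^{-1}d_{KS_X}\xi=s^{-1}\debar\xi$ and bracket $(-1)^{|\xi_1|}s^{-1}[\xi_1,\xi_2]$). The decisive point is a degree count: since $\bl_\xi$ and $h$ preserve the holomorphic degree while each $\bi_\xi$ lowers it by one, every summand $\pi\bi_{\xi_{\sigma(1)}}\cdots\bi_{\xi_{\sigma(j)}}h\bl_{\xi_{\sigma(j+1)}}\cdots h\bl_{\xi_{\sigma(k)}}\imath$ of $p_k$ in \eqref{equ.minimalnperiodmap} sends $H_X^{n,\ast}$ into $H_X^{n-j,\ast}$, so $P$ annihilates it unless $j=n$. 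Consequently $P p_k=0$ for $k<n$, which gives $q_k=0$ for $2<k<n$; also $P p_1=P(\pi\bi_\xi\imath)=0$ (here the hypothesis $n\geq2$ enters, as that image sits in holomorphic degree $n-1\geq1$), giving $q_1(s^{-1}\xi,sf)=(s^{-1}\debar\xi,0)$; and for $k\geq n$ only the $j=n$ summands of $p_k$ survive under $P$, yielding the stated $q_{n+k}$. For $k=2$ the extra $A$-component $Psp_2$ is nonzero precisely when $n=2$, in which case it equals $s\pi\bi_{\xi_1}\bi_{\xi_2}\imath$ (symmetric in $\xi_1,\xi_2$ because $[\bi_{\xi_1},\bi_{\xi_2}]=0$), which accounts for the case distinction in the formula for $q_2$.

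The whole argument is essentially bookkeeping once Theorem~\ref{thm.modelfiberproduct} is in hand; the only point deserving some care is matching the unshuffle index sets $S(n,1,\ldots,1)$ coming from \eqref{equ.minimalnperiodmap} with the symmetrized Taylor coefficients produced by the construction $A\rtimes_{\phi F}L[1]$, so that after projection one recovers exactly the displayed summations, together with the routine sign-checking of the low-degree terms $q_1$ and $q_2$. I do not expect a genuinely hard step: the main conceptual input — the minimal model $\mathcal{P}_\infty$ of the $n$th period map and the semidirect-product description of homotopy pullbacks — is already established, and the anticipated obstacle, namely controlling the combinatorics of the transferred higher brackets, is here tamed by the holomorphic-degree count, which forces almost all terms to vanish and leaves precisely the asserted structure.
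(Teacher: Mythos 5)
Your proposal is correct and follows exactly the route the paper takes: the paper proves this theorem precisely ``by a straightforward application of Theorem~\ref{thm.modelfiberproduct}'' to the decomposition $\Hom^*(H_X^{n,\ast},H_X^{<n,\ast})[-1]=N\oplus A$ with $N=\Hom^*(H_X^{n,\ast},H_X^{0<\ast<n,\ast})[-1]$, $A=\Hom^*(H_X^{n,\ast},H_X^{0,\ast})[-1]$ and $F=\mathcal{P}_\infty$, the only work being the holomorphic-degree count showing $Pp_k$ retains only the $j=n$ summands. Your bookkeeping (vanishing of all terms involving $d_M$ or the $M$-bracket because the target carries the trivial structure, $Pp_k=0$ for $k<n$, and the $n=2$ case for $q_2$) supplies exactly the details the paper leaves implicit.
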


Fixing the above choice of a model of $\Yuk_X[1]$, the Maurer-Cartan functor becomes 
\begin{multline*}
\MC_{\Yuk_X[1]}\colon\Art\rh\Set,\quad B\mapsto\MC_{\Yuk_X[1]}(B)=\\=\left\{(s^{-1}\xi,sf)\in\left(\Yuk_X[1]\otimes\mathfrak{m}_B\right)^0\;\middle|\; \debar{\xi}=\frac{1}{2}[\xi,\xi],\,\,\, \sum_{k\geq0}\pi\left(\frac{\bi_\xi^n}{n!}\right)(h\bl_\xi)^k\imath = \pi\left(\frac{\bi_\xi^n}{n!}\right)\imath_\xi=0\right\}
\end{multline*}
thus recovering the equation of Corollary~\ref{cor.determinantal2}.

The previous model of $\Yuk_X\to KS_X$ has the advantage to be a fibration of $L_{\infty}$ algebras 
with minimal fiber. On the other hand, it has the disadvantage that the formulas for the brackets involve Green's operator, 
which is almost never explicitly known. We conclude this section by giving a second, more treatable, model of $\Yuk_X[1]$. We replace our algebraic model of the period map by  the weakly equivalent one from Theorem~\ref{th:modelperiod2}, then we apply again Theorem~\ref{thm.modelfiberproduct} to obtain an explicit model for the homotopy fiber product of
\[\xymatrix{&\Hom^*(A_X^{n,\ast},A_X^{0<\ast<n,\ast})\ar[d] \\ 
KS_X[1]\ar[r]^-{\Pi_\infty} & \Hom^*(A_X^{n,\ast},A_X^{<n,\ast})}\]

Recall that the  DG-Lie algebra structure on $(\Hom^*(A_X^{n,\ast},A_X^{<n,\ast})[-1],\delta,[-,-])$ is given by 
\[ \delta(sf)=s(-[\debar,f]-P\de f),\qquad [sf_1,sf_2]=s\left( f_1\de f_2 - (-1)^{(|f_1|+1)(|f_2|+1)}f_2\de f_1\right),\]
where $P\colon A_X^{\ast,\ast}\to A_X^{<n,\ast}$ is the projection with kernel $A_X^{n,\ast}$, and 
we identify every element $f\in\Hom^*(A_X^{n,\ast},A_X^{<n,\ast})$ with its image in $\End(A_X^{\ast,\ast})$; in particular 
$\operatorname{Im}(f)\subseteq A_X^{<n,\ast}\subseteq\operatorname{Ker}(f)$. 
For degree reasons we have $[sf_1,sf_2]=0$ whenever $f_1,f_2\in\Hom^*(A_X^{n,\ast},A_X^{0,\ast})$ and therefore the decomposition 
\[\Hom^*(A_X^{n,\ast},A_X^{<n,\ast})[-1]=\Hom^*(A_X^{n,\ast},A_X^{0<\ast<n,\ast})[-1]\oplus \Hom^*(A_X^{n,\ast},A_X^{0,\ast})[-1] \] 
satisfies the hypotheses of Voronov construction of higher derived brackets \cite{voronov,voronov2}.

\begin{lemma} The $L_\infty[1]$ algebra structure on $\Hom^*(A_X^{n,\ast},A_X^{0,\ast})[-1]$ induced via higher derived brackets is abelian with differential $q_1(sf)=-s[\debar,f]$.\end{lemma}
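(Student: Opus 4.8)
The plan is to unwind the explicit derived-bracket formulas recalled in Section~\ref{sec.semidirect}. The ambient DG-Lie algebra is $(\Hom^*(A_X^{n,\ast},A_X^{<n,\ast})[-1],\delta,[-,-])$ as displayed just before the lemma, and we use the decomposition $\Hom^*(A_X^{n,\ast},A_X^{<n,\ast})[-1]=\mathfrak{n}\oplus\mathfrak{a}$ with $\mathfrak{n}:=\Hom^*(A_X^{n,\ast},A_X^{0<\ast<n,\ast})[-1]$ and $\mathfrak{a}:=\Hom^*(A_X^{n,\ast},A_X^{0,\ast})[-1]$, which puts us in the situation of Voronov's second construction; let $p$ be the projection onto $\mathfrak{a}$ with kernel $\mathfrak{n}$. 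By the formulas in Section~\ref{sec.semidirect} the induced $L_\infty[1]$ brackets on $\mathfrak{a}$ are $q_1(a)=p(\delta a)$ and $q_k(a_1\odot\cdots\odot a_k)=p[\cdots[\delta a_1,a_2]\cdots,a_k]$ for $k\ge2$. Hence it suffices to prove: (i) $q_1(sf)=-s[\debar,f]$ for $f\in\Hom^*(A_X^{n,\ast},A_X^{0,\ast})$, and (ii) $[\delta(sf_1),sf_2]=0$ for all $f_1,f_2\in\Hom^*(A_X^{n,\ast},A_X^{0,\ast})$ --- since (ii) immediately forces $q_k=0$ for every $k\ge2$.

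For (i) the argument is pure bookkeeping on the holomorphic degree. We have $\delta(sf)=s(-[\debar,f]-P\de f)$, where $P\colon A_X^{\ast,\ast}\to A_X^{<n,\ast}$ is the projection killing $A_X^{n,\ast}$; moreover $P\de f=\de f$ because $\de f$ takes values in $A_X^{1,\ast}\subseteq A_X^{<n,\ast}$ (using $n\ge2$). Since $\debar$ preserves the holomorphic degree, $[\debar,f]$ maps $A_X^{n,\ast}$ into $A_X^{0,\ast}$, so $s[\debar,f]\in\mathfrak{a}$, whereas $\de f$ maps $A_X^{n,\ast}$ into $A_X^{1,\ast}$, so $s\de f\in\mathfrak{n}$; applying $p$ gives $q_1(sf)=-s[\debar,f]$.

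For (ii) I would set $g:=-[\debar,f_1]-\de f_1$, so that $\delta(sf_1)=sg$, and record two facts about $g$ viewed as an endomorphism of $A_X^{\ast,\ast}$: first, $g$ annihilates $A_X^{<n,\ast}$, since $f_1$ does and $\debar(A_X^{<n,\ast})\subseteq A_X^{<n,\ast}$; second, $\de g=-\de[\debar,f_1]$ takes values in $A_X^{1,\ast}$, since the only other possible contribution $\de(\de f_1)$ vanishes by $\de^2=0$. Then, expanding $[\delta(sf_1),sf_2]=[sg,sf_2]=s\big(g\,\de f_2-(-1)^{(|g|+1)(|f_2|+1)}f_2\,\de g\big)$, the first term vanishes because $\de f_2$ has image $A_X^{1,\ast}\subseteq\operatorname{Ker}(g)$, and the second term vanishes because $\de g$ has image $A_X^{1,\ast}\subseteq\operatorname{Ker}(f_2)$ (both inclusions using $n\ge2$). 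Therefore $[\delta(sf_1),sf_2]=0$, and all brackets $q_k$ with $k\ge2$ vanish.

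I do not expect a real obstacle here: once the derived-bracket formulas are written out the computation is mechanical, the vanishing beyond the linear term being forced by $\de^2=0$ together with the fact that $\Hom^*(A_X^{n,\ast},A_X^{<n,\ast})$ is a square-zero subalgebra of $\End(A_X^{\ast,\ast})$ --- precisely the hypothesis that makes Voronov's construction (and Theorem~\ref{thm.derivedproducts}) applicable. The one place asking for a moment of care is checking that $\de g$ lands in $A_X^{1,\ast}$ and not merely in $A_X^{1,\ast}\oplus A_X^{2,\ast}$; this is exactly where $\de^2=0$ enters, and it is what rules out any exceptional $q_2$ term, in contrast with the first $L_\infty[1]$ model of $\Yuk_X[1]$ in Theorem~\ref{th:L_ooalgyukawa}.
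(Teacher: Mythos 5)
Your proof is correct and takes essentially the same route as the paper: everything reduces to the vanishing of $[\delta(sf_1),sf_2]$ for $f_1,f_2\in\Hom^*(A_X^{n,\ast},A_X^{0,\ast})$, which you establish by tracking holomorphic degrees together with $\de^2=0$ --- exactly the paper's argument, except that you treat all $n\ge2$ uniformly where the paper separates the cases $n\ge3$ (pure degree reasons) and $n=2$ (the surviving $f_2\de^2f_1$ term). Your explicit check of the formula for $q_1$ is a detail the paper leaves implicit.
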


\begin{proof} We claim that $[\delta(sf_1),sf_2]=0$ in the DG-Lie algebra $\Hom^*(A_X^{n,\ast},A_X^{<n,\ast})[-1]$ whenever $f_1,f_2\in\Hom^*(A_X^{n,\ast},A_X^{0,\ast})$, which obviously proves the lemma. This is clear by degree reasons if $n\geq3$, while for $n=2$ we have
\[ [\delta(sf_1),sf_2]=[s(-[\debar,f_1]-\de f_1),sf_2] = (-1)^{|f_1|(|f_2|+1)}s(f_2\de^2 f_1) =0.\]
\end{proof}

\begin{theorem} The $L_\infty[1]$ algebra $(KS_X[1]\times\Hom^*(A_X^{n,\ast},A_X^{0,\ast})[-1],q_1,\ldots,q_n,\ldots)$, where the only nontrivial brackets are $q_1(s^{-1}\xi,sf)=(s^{-1}\debar\xi,-s[\debar,f])$
\[ q_2(s^{-1}\xi_1\odot s^{-1}\xi_2)=\left\{\begin{array}{ll}
((-1)^{|\xi_1|}s^{-1}[\xi_1,\xi_2],s(\bi_{\xi_1}\bi_{\xi_2})) & \mbox{if $n=2$,}\\
(-1)^{|\xi_1|}s^{-1}[\xi_1,\xi_2] & \mbox{if $n >2$,}
\end{array}\right.\]
\[ q_2(sf\otimes s^{-1}\xi)= (-1)^{|f|}s(f\bl_\xi),\]
\[q_n(s^{-1}\xi_1\odot\cdots\odot s^{-1}\xi_n)=s(\bi_{\xi_1}\cdots\bi_{\xi_n}),\]
is a homotopy fiber product of the diagram 
\[ \xymatrix{&\Hom^*(A_X^{n,\ast},A_X^{0<\ast<n,\ast})\ar[d] \\ KS_X[1]\ar[r]^-{\Pi_\infty} & \Hom^*(A_X^{n,\ast},A_X^{<n,\ast})} \]
and a model for $\Yuk_X[1]$.
\end{theorem}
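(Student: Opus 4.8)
The plan is to recognise the displayed cospan as an instance of Theorem~\ref{thm.modelfiberproduct} and then to specialise its general bracket formulas to the present situation. First I would fix the data entering that theorem: take $M=\Hom^*(A_X^{n,\ast},A_X^{<n,\ast})[-1]$ with the DG-Lie structure recalled above, so that $M[1]=\Hom^*(A_X^{n,\ast},A_X^{<n,\ast})$ is the target of $\Pi_\infty$; inside $M$ take $N=\Hom^*(A_X^{n,\ast},A_X^{0<\ast<n,\ast})[-1]$, the DG-Lie subalgebra attached to the inclusion $A_X^{0<\ast<n,\ast}\subseteq A_X^{<n,\ast}$, and $A=\Hom^*(A_X^{n,\ast},A_X^{0,\ast})[-1]$, the complementary graded summand coming from $A_X^{<n,\ast}=A_X^{0,\ast}\oplus A_X^{0<\ast<n,\ast}$. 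As already noted just before the present theorem, since $n\geq2$ degree reasons force $[sf_1,sf_2]=0$ for $f_1,f_2\in\Hom^*(A_X^{n,\ast},A_X^{0,\ast})$, so $A$ is an abelian graded Lie subalgebra of $M$ and $M=N\oplus A$. Taking $L=KS_X$ and $F=\Pi_\infty\colon KS_X[1]\to M[1]$ from Theorem~\ref{th:modelperiod2}, Theorem~\ref{thm.modelfiberproduct} then produces the $L_\infty[1]$ algebra $A\rtimes_{\phi F}KS_X[1]$ --- which, up to swapping the two factors, is the graded space in the statement --- as a model for the homotopy fiber product of $KS_X[1]\xrightarrow{\Pi_\infty}M[1]\hookleftarrow N[1]$.

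The bulk of the work would then be to check that the five families of brackets in Theorem~\ref{thm.modelfiberproduct} collapse to the list claimed. Writing $P\colon A_X^{<n,\ast}\to A_X^{0,\ast}$ for the projection with kernel $A_X^{0<\ast<n,\ast}$ and recalling $\pi_j(s^{-1}\xi_1\odot\cdots\odot s^{-1}\xi_j)=\bi_{\xi_1}\cdots\bi_{\xi_j}$ for $j\leq n$ and $\pi_j=0$ for $j>n$, everything rests on a handful of elementary observations, all of which use $n\geq2$: (i) $\bi_{\xi_1}\cdots\bi_{\xi_j}$ sends $A_X^{n,\ast}$ into $A_X^{n-j,\ast}$, so post-composing with $P$ annihilates it for $0<j<n$ and is the identity for $j=n$; (ii) in the DG-Lie bracket of $M$ every cross term $g\,\de\,h$ vanishes for bidegree reasons as soon as $g$ or $h$ is one of the maps $\bi_{\xi_1}\cdots\bi_{\xi_j}$ with $j\geq1$ or takes values in $A_X^{0,\ast}$, the single exception being $f\,\de\,\bi_\xi$, which by the Cartan identity $\de\bi_\xi=\bl_\xi-\bi_\xi\de$ and $f\bi_\xi=0$ equals $f\bl_\xi$ and already lands in $A_X^{0,\ast}$. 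Feeding $f_j=\pi_j$ into the formulas: the unary bracket collapses, via $\delta(sf)=s(-[\debar,f]-P\de f)$ and (i), to $q_1(s^{-1}\xi,sf)=(s^{-1}\debar\xi,-s[\debar,f])$; the binary bracket on $KS_X[1]$ retains its $A$-component $s(\bi_{\xi_1}\bi_{\xi_2})$ precisely when $n=2$ and loses it for $n>2$, by (i); the binary bracket $q_2(sf\otimes s^{-1}\xi)$ becomes $(-1)^{|f|}s(f\bl_\xi)$, by (ii); the brackets $q_k$ for $2<k<n$, all brackets with two or more arguments from $A$, and all mixed brackets whose $KS_X[1]$-part has length $\geq2$ vanish by (i)--(ii); and the top bracket is $q_n(s^{-1}\xi_1\odot\cdots\odot s^{-1}\xi_n)=s(\bi_{\xi_1}\cdots\bi_{\xi_n})$, by (i). Koszul signs are tracked with the conventions of Section~\ref{sec:ooalgebras}. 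This reproduces exactly the bracket list in the statement.

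Finally, to identify $A\rtimes_{\phi F}KS_X[1]$ with $\Yuk_X[1]$ I would exhibit a weak equivalence between the cospan above and the cospan $KS_X[1]\xrightarrow{\mathcal{P}_\infty}\Hom^*(H_X^{n,\ast},H_X^{<n,\ast})\hookleftarrow\Hom^*(H_X^{n,\ast},H_X^{0<\ast<n,\ast})$ used in Theorem~\ref{th:L_ooalgyukawa}. It is given by the identity on $KS_X[1]$, by the $L_\infty[1]$ quasi-isomorphism $G$ of Proposition~\ref{prop.transfertoharmonic} on the middle vertex --- recall $\mathcal{P}_\infty=G\Pi_\infty$ from the proof of Theorem~\ref{th:modelperiodmap} --- and by the evident restriction $G'$ of $G$ to the subalgebras $\Hom^*(-,-^{0<\ast<n,\ast})$, which is again a quasi-isomorphism (the analog of the contraction \eqref{contr.per.map} for the interval $0<\ast<n$) and is compatible with the two inclusions. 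Since the category of $L_\infty[1]$ algebras is a pointed category of fibrant objects, homotopy fiber products are invariant under such levelwise weak equivalences of cospans, so $A\rtimes_{\phi F}KS_X[1]$ is quasi-isomorphic to the model of Theorem~\ref{th:L_ooalgyukawa} and is therefore a model of $\Yuk_X[1]$.

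The hard part is the second step. No single computation is deep, but one must carefully control the bidegree supports of iterated contractions --- this is where the hypothesis $n\geq2$ is used and where the $n=2$ case of $q_2$ splits off --- keep the two distinct projections (onto $A_X^{<n,\ast}$ and onto $A_X^{0,\ast}$, both customarily written $P$) apart, and get the Koszul signs right, in order to be certain that the rather involved semidirect-product formulas genuinely reduce to the short list in the statement.
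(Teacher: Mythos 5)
Your proposal is correct and follows essentially the same route as the paper: both apply Theorem~\ref{thm.modelfiberproduct} to the decomposition $\Hom^*(A_X^{n,\ast},A_X^{<n,\ast})[-1]=\Hom^*(A_X^{n,\ast},A_X^{0<\ast<n,\ast})[-1]\oplus\Hom^*(A_X^{n,\ast},A_X^{0,\ast})[-1]$ with $F=\Pi_\infty$, kill all but finitely many of the general brackets by bidegree reasons, and reduce the single surviving cross term to $f\bl_\xi$ via $\bl_{\xi}|_{A_X^{n,\ast}}=\de\bi_{\xi}|_{A_X^{n,\ast}}$. Your explicit weak equivalence of cospans in the last step is just a spelled-out version of the replacement the paper performs in the paragraph preceding the theorem.
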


\begin{proof} We have to show that the $L_\infty[1]$ algebra structure in the statement  is the one given by the formulas of Theorem~\ref{thm.modelfiberproduct}. This is a consequence of the fact that  nested brackets of the form $[\cdots[s(\bi_{\xi_1}\cdots\bi_{\xi_j}),sf_1].\cdots,sf_k]$, where $f_1,\ldots,f_k\in\Hom^*(A_X^{n,\ast},A_X^{0,\ast})$ and the brackets are computed in the DG-Lie algebra  $\Hom^*(A_X^{n,\ast},A_X^{<n,\ast})[-1]$, are trivial for obvious degree reasons except in the case $j=k=1$, where 
\[ [s(\bi_\xi),sf]=(-1)^{|\xi||f|+|\xi|+1}s(f\de\bi_\xi)=(-1)^{|\xi||f|+|\xi|+1}s(f\bl_\xi)\in\Hom^*(A_X^{n,\ast},A_X^{0,\ast})[-1],\]
	as $\bl_{\xi|A_X^{n,\ast}}=\de\bi_{\xi|A_X^{n,\ast}}$. The restriction of the $L_\infty[1]$ structure to the fiber $\Hom^*(A_X^{n,\ast},A_X^{0,\ast})[-1]$ is homotopy abelian by the previous lemma. Finally, it is clear by degree reason that the only bracket $q_k(s^{-1}\xi_1\odot\cdots\odot s^{-1}\xi_k)$ with a nontrivial component along $\Hom^*(A_X^{n,\ast},A_X^{0,\ast})[-1]$ is $q_n$.
\end{proof}

\bigskip
\section{Formality of Yukawa algebras  for K3 surfaces}

When $X$ is a compact K\"{a}hler manifold with trivial canonical bundle, we get a consistent simplification of our formulas. If $n=\dim X$, let 
$\Omega\in H^0(X,K_X)$ be a holomorphic volume form. The  Bogomolov-Tian-Todorov theorem says that the Kodaira-Spencer algebra $KS_X$ is homotopy abelian, where, according to \cite[p. 357]{GoMil2} (cf. also 
\cite[Section 7.3]{ManRendiconti}), 
an explicit homotopy equivalence 
is given by the pair of quasi-isomorphisms  of DG-Lie algebras
\[ KS_X\xleftarrow{\;\alpha\;}L=\{\xi\in KS_X\mid \bl_{\xi}(\Omega)=0\}\xrightarrow{\;\beta\;}M=
\frac{L}{I}\,,\]
where $I$ is the differential Lie ideal
\[ I=\{\xi\in KS_X\mid \bi_{\xi}(\Omega)\in \de(A^{n-2,*})\}\;.\]
More precisely,  the Koszul-Tian-Todorov lemma (\cite[Prop. 2.3]{koszul85}, \cite[Lemma 3.1]{Tian}, 
\cite[Lemma 1.2.4]{Todorov}) tells that both $L,M$ are DG-Lie algebras and the bracket on $M$ is trivial, while the $\de\debar$-lemma implies that both arrows are quasi-isomorphisms and the differential on $M$ is trivial.

Taking the pull-back of the fibration of $L_{\infty}[1]$-algebras
\[ KS_X[1]\times\Hom(H_X^{n,\ast},H_X^{0,\ast})[-1]\to KS_X[1]\]
via the strict quasi-isomorphism $\alpha$ we get a strict quasi-isomorphism 
\[ L\times\Hom(H_X^{n,\ast},H_X^{0,\ast})[-1]\to KS_X[1]\times\Hom(H_X^{n,\ast},H_X^{0,\ast})[-1]\]
in which every Taylor coefficient of degree $>n$ in $L\times\Hom(H_X^{n,\ast},H_X^{0,\ast})[-1]$ vanishes.
More precisely the nontrivial brackets are $q_1(s^{-1}\xi,sf)=(s^{-1}\debar\xi,0)$,
\[ q_2(s^{-1}\xi_1\odot s^{-1}\xi_2)=\left\{\begin{array}{ll}
((-1)^{|\xi_1|}s^{-1}[\xi_1,\xi_2],s\pi\bi_{\xi_1}\bi_{\xi_2}\imath) & \mbox{if $n=2$,}\\
((-1)^{|\xi_1|}s^{-1}[\xi_1,\xi_2],0) & \mbox{if $n >2$,}
\end{array}\right. \]
$q_k=0$ for $2<k<n$ and finally
\[q_{n}(s^{-1}\xi_1\odot\cdots\odot s^{-1}\xi_{n})=
s\pi\bi_{\xi_{1}}\cdots\bi_{\xi_{n}}\,.\]

\begin{lemma} In the above notation, if $X$ is a K3 surface then, for every $\xi\in L$ and every $\eta\in I$ we have 
\[ \pi\bi_{\xi}\bi_{\eta}=\pi\bi_{\eta}\bi_{\xi}=0\;.\]
\end{lemma}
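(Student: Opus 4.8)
The plan is to reduce the statement, using the shape of the Hodge diamond of a K3 surface, to a single integration by parts. First, $[\bi_\xi,\bi_\eta]=0$ by the holomorphic Cartan formulas \eqref{equ.holomorphiccartanformulas}, so $\bi_\xi\bi_\eta$ and $\bi_\eta\bi_\xi$ agree up to sign and it suffices to treat $\pi\bi_\xi\bi_\eta$ as a map $H^{*,*}_X\to H^{*,*}_X$ (that is, $\pi\bi_\xi\bi_\eta\imath$). Since each of $\bi_\xi,\bi_\eta$ lowers the holomorphic degree by one, $\bi_\xi\bi_\eta$ carries $A^{2,*}_X$ into $A^{0,*}_X$ and annihilates $A^{p,*}_X$ for $p<2$, so $\pi\bi_\xi\bi_\eta$ is determined by its restriction to the harmonic $(2,*)$-forms. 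For a K3 one has $H^{2,0}_X=\C\cdot\Omega$, $h^{2,1}(X)=0$, $h^{0,1}(X)=0$ and $\dim H^{0,2}_X=\dim H^{2,2}_X=1$, so the only possibly nonzero components are $H^{2,0}_X\to H^{0,2}_X$, $H^{2,0}_X\to H^{0,0}_X$, and (only when $\xi,\eta$ lie in $A^{0,0}_X(\Theta_X)$) $H^{2,2}_X\to H^{0,2}_X$. I will carry out the main case, $\pi\bi_\xi\bi_\eta\Omega\in H^{0,2}_X$, in detail; the others are entirely analogous.

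So suppose we must show $\pi\bi_\xi\bi_\eta\Omega=0$ in $H^{0,2}_X$. Since $\eta\in I$ and $\bi_\eta\Omega\in A^{1,1}_X$, write $\bi_\eta\Omega=\de\gamma$ with $\gamma\in A^{0,1}_X$. On a surface $A^{0,3}_X=0$, hence $A^{0,2}_X=H^{0,2}_X\oplus\debar A^{0,1}_X$; moreover $\int_X\Omega\wedge\debar\delta=\pm\int_X\debar(\Omega\wedge\delta)=0$ by Stokes (using $\debar\Omega=0$ and $A^{3,1}_X=0$), while $\int_X\Omega\wedge\bar\Omega\neq0$ and $\dim H^{0,2}_X=1$, so for every $\beta\in A^{0,2}_X$ one has $\pi\beta=0$ if and only if $\int_X\Omega\wedge\beta=0$. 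Thus it is enough to prove $\int_X\Omega\wedge\bi_\xi\de\gamma=0$. Now $\Omega\wedge\de\gamma\in A^{3,1}_X=0$ and $\bi_\xi$ is a derivation of the wedge product, so $0=\bi_\xi(\Omega\wedge\de\gamma)=\bi_\xi\Omega\wedge\de\gamma+\Omega\wedge\bi_\xi\de\gamma$, i.e. $\Omega\wedge\bi_\xi\de\gamma=-\bi_\xi\Omega\wedge\de\gamma$. Finally $\de\bi_\xi\Omega=\bl_\xi\Omega$ because $\de\Omega=0$, and $\bl_\xi\Omega=0$ since $\xi\in L$; therefore $\de(\bi_\xi\Omega\wedge\gamma)=\bi_\xi\Omega\wedge\de\gamma$ is a $\de$-exact form of top degree, so $\int_X\bi_\xi\Omega\wedge\de\gamma=\int_X\de(\bi_\xi\Omega\wedge\gamma)=0$ by Stokes. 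Putting these together, $\int_X\Omega\wedge\bi_\xi\de\gamma=0$, whence $\pi\bi_\xi\bi_\eta\Omega=0$, and $\pi\bi_\eta\bi_\xi\Omega=0$ follows from $[\bi_\xi,\bi_\eta]=0$.

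The remaining components follow by the same device: use membership in $I$ to rewrite the relevant $\bi_\bullet\Omega$ as a $\de$-exact expression and integrate by parts against $\Omega$ or $\bar\Omega$, invoking $\de\Omega=\debar\Omega=\de\bar\Omega=\debar\bar\Omega=0$, the vanishing of $A^{3,*}_X$ and $A^{*,3}_X$, and $\de\bi_\xi\Omega=\bl_\xi\Omega=0$. The one point I expect to require care is the component valued in $H^{0,0}_X$, where $\pi$ is the average against the metric volume form rather than a pairing with $\Omega\wedge\bar\Omega$; there, and in the $H^{2,2}_X$-source component, one may reduce to a Ricci-flat metric --- harmless, since $\Yuk_X$ is defined only up to quasi-isomorphism and is independent of the choice of K\"ahler metric --- for which $H^{2,2}_X=\C\cdot(\Omega\wedge\bar\Omega)$ and the volume form is proportional to $\Omega\wedge\bar\Omega$, and then conclude as above. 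The crux of the proof is the single integration by parts that exploits $\de\bi_\xi\Omega=\bl_\xi\Omega=0$.
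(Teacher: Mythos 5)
Your proof is correct and follows essentially the same route as the paper's: restrict to the three components permitted by the Hodge diamond of a K3, test against $\Omega$ (resp. $\Omega\wedge\bar{\Omega}$), and kill the resulting integral using the derivation property of $\bi_{\xi}$ together with the fact that $\bi_{\xi}\Omega$ is $\de$-closed (as $\xi\in L$) while $\bi_{\eta}\Omega$ is $\de$-exact (as $\eta\in I$). The one point where you are more explicit than the paper concerns the components through $H^{0,0}_X$ and $H^{2,2}_X$: the paper simply asserts that $\Omega\wedge\bar{\Omega}$ is a scalar multiple of the K\"{a}hler volume form (equivalently that it is harmonic, so that $\debar^*(\Omega\wedge\bar{\Omega})=0$ and the harmonic projection may be dropped under the integral), which is exactly the Ricci-flat normalization you invoke, and your justification --- that the model of $\Yuk_X$ is only needed up to quasi-isomorphism, so the metric may be chosen freely --- is the right way to legitimize that step.
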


\begin{proof} Notice first that $\bi_{\xi}\bi_{\eta}=\pm\bi_{\eta}\bi_{\xi}$ and then it is sufficient to 
prove the first equality $\pi\bi_{\xi}\bi_{\eta}=0$. 
Since $H^{2,1}_X=H^{0,1}_X=0$ the lemma is trivially verified when $|\xi|+|\eta|\not=0,2$.\par

If $|\xi|+|\eta|=2$, by Serre duality it is sufficient to show that 
\[ \int_X \pi\bi_{\xi}\bi_{\eta}(\Omega)\wedge \Omega=0\;.\]
Since $\Omega$ is $\debar$-closed and $\pi\bi_{\xi}\bi_{\eta}(\Omega)-\bi_{\xi}\bi_{\eta}(\Omega)$ is $\debar$-exact we have 
\[ \int_X \pi\bi_{\xi}\bi_{\eta}(\Omega)\wedge \Omega=
\int_X \bi_{\xi}\bi_{\eta}(\Omega)\wedge \Omega=-\int_X \bi_{\eta}(\Omega)\wedge \bi_{\xi}(\Omega)\,,\]
where the second equality follows from the fact that $\bi_{\xi}$ is a derivation of degree $0$ of the de 
Rham algebra. The last integral is trivial since by assumption $\bi_{\xi}(\Omega)$ is $\de$-closed and 
$\bi_{\eta}(\Omega)$ is $\de$-exact.\par

If $|\xi|+|\eta|=0$, again by Serre duality it is sufficient to show that 
\[ \int_X \pi\bi_{\xi}\bi_{\eta}(\Omega)\wedge \Omega\wedge\overline{\Omega}=0,\quad 
\int_X \pi\bi_{\xi}\bi_{\eta}(\Omega\wedge\overline{\Omega})\wedge \Omega=0
\;.\]
Since $\bi_{\rho}(\overline{\Omega})=0$ for every $\rho$, 
the same argument used in the previous case implies that 
\[ \int_X \pi\bi_{\xi}\bi_{\eta}(\Omega\wedge\overline{\Omega})\wedge \Omega=
\int_X \bi_{\xi}\bi_{\eta}(\Omega\wedge\overline{\Omega})\wedge \Omega=
-\int_X \bi_{\eta}(\Omega\wedge\overline{\Omega})\wedge \bi_{\xi}(\Omega)\]
and the last integral vanishes since $\bi_{\eta}(\Omega\wedge\overline{\Omega})=\bi_{\eta}(\Omega)\wedge\overline{\Omega}$ is $\de$-exact and 
$\bi_{\xi}(\Omega)\wedge\overline{\Omega}$ is $\de$-closed. 
Since $\Omega\wedge\overline{\Omega}$ is a scalar multiple of the K\"{a}hler volume form, we have 
$\debar^*(\Omega\wedge\overline{\Omega})=0$ and, since $\pi\bi_{\xi}\bi_{\eta}(\Omega)-
\bi_{\xi}\bi_{\eta}(\Omega)$ is $\debar^*$-exact, we have 
\[ \int_X \pi\bi_{\xi}\bi_{\eta}(\Omega)\wedge \Omega\wedge\overline{\Omega}=
\int_X \bi_{\xi}\bi_{\eta}(\Omega)\wedge \Omega\wedge\overline{\Omega}\]
and the same argument as above shows that the last integral vanishes.

\end{proof}

\begin{theorem} The Yukawa $L_{\infty}$ algebra $\Yuk_X$ of a K3 surface $X$ is formal. 
More precisely, $\Yuk_X$ is quasi-isomorphic to the graded Lie algebra
\[ H^*(X,\Theta_X)\times \Hom^*(H^*(\Omega^2_X),H^*(\Oh_X)),\qquad [(\xi,u),(\eta,v)]=(-1)^{|\xi|}\bi_{\xi}\bi_{\eta}\,.\]
\end{theorem}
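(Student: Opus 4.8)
The plan is to start from the explicit K3 model of $\Yuk_X[1]$ written down above and to kill the differential ideal $I\subset L$ on which the Yukawa cocycle has just been shown to vanish. For $n=\dim X=2$ the $L_\infty[1]$ algebra obtained by pulling the model of Theorem~\ref{th:L_ooalgyukawa} back along the strict quasi-isomorphism $\alpha\colon L\to KS_X$ lives on $\widetilde{L}:=L\times\Hom^*(H^{2,*}_X,H^{0,*}_X)[-1]$ and has only two nontrivial Taylor coefficients, $q_1(s^{-1}\xi,sf)=(s^{-1}\debar\xi,0)$ and $q_2(s^{-1}\xi_1\odot s^{-1}\xi_2)=((-1)^{|\xi_1|}s^{-1}[\xi_1,\xi_2],s\pi\bi_{\xi_1}\bi_{\xi_2}\imath)$ (all $q_k$ with $k\ge3$ vanish, since $n=2$). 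Thus $\widetilde{L}$ is the d\'ecalage of a DG-Lie algebra: the abelian extension of the DG-Lie algebra $L$ by the DG-module $\Hom^*(H^{2,*}_X,H^{0,*}_X)$ — equipped with zero differential and trivial $L$-action — twisted by the symmetric $2$-cocycle $c(\xi_1,\xi_2)=\pi\bi_{\xi_1}\bi_{\xi_2}\imath$.

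The key step is to push this extension down to $M=L/I$. By the Lemma we have $\pi\bi_\xi\bi_\eta=0$ whenever $\xi\in L$ and $\eta\in I$, so $c$ is $I$-basic, i.e.\ $c(\xi_1+\eta_1,\xi_2+\eta_2)=c(\xi_1,\xi_2)$ for $\eta_1,\eta_2\in I$. Together with the Koszul--Tian--Todorov fact that $I$ is a differential Lie ideal of $L$, this shows that $I[1]\times 0\subset\widetilde{L}$ is an $L_\infty[1]$ ideal: it is preserved by $q_1$ because $\debar I\subset I$, and by $q_2$ because $[I,L]\subset I$ while the $\Hom$-component $\pi\bi_\eta\bi_\xi\imath$ vanishes for $\eta\in I$. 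Quotienting out, I get a strict $L_\infty[1]$ morphism $\widetilde{L}\to M[1]\times\Hom^*(H^{2,*}_X,H^{0,*}_X)[-1]$; on the quotient the induced $q_1$ is zero (the differential on $M$ is trivial by the $\de\debar$-lemma, and that on $\Hom^*$ was zero already), and $q_2(s^{-1}[\xi_1]\odot s^{-1}[\xi_2])=(0,s\pi\bi_{\xi_1}\bi_{\xi_2}\imath)$ (the bracket on $M$ is trivial, and the cocycle descends by the Lemma). Hence the quotient is a minimal $L_\infty[1]$ algebra with zero differential and only a quadratic bracket, i.e.\ the d\'ecalage of a graded Lie algebra.

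To conclude it then suffices to check that the above projection is a quasi-isomorphism: its linear part is $\beta[1]\oplus\id$, where $\beta\colon L\to M$ is the quasi-isomorphism of DG-Lie algebras in the displayed zig-zag, and the summand $\Hom^*(H^{2,*}_X,H^{0,*}_X)[-1]$ carries the zero differential on both sides; so it induces an isomorphism in cohomology. Therefore $\Yuk_X[1]$ is quasi-isomorphic to this minimal model, and in particular $\Yuk_X$ is formal, quasi-isomorphic to its own cohomology. The remaining identification is bookkeeping: $M$ has zero differential and is quasi-isomorphic to $KS_X$, so $M\cong H^*(X,\Theta_X)$; passing from $L_\infty[1]$ to $L_\infty$ by d\'ecalage turns the pair $(q_1=0,q_2)$ into a genuine graded Lie bracket carrying the sign $(-1)^{|\xi|}$, and under $H^{2,q}_X\cong H^q(X,\Omega^2_X)$, $H^{0,q}_X\cong H^q(X,\Oh_X)$ the factor $\Hom^*(H^{2,*}_X,H^{0,*}_X)[-1]$ becomes $\Hom^*(H^*(\Omega^2_X),H^*(\Oh_X))$ — the additional shift being absorbed when one passes from the total degree on $A^{*,*}_X$ to the cohomological degree on sheaf cohomology — and reading off $q_2$ gives $[(\xi,u),(\eta,v)]=(-1)^{|\xi|}\bi_\xi\bi_\eta$.

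The genuine content of the argument is the Lemma, which is already established; everything else is formal. The step I expect to require the most care is checking that $I[1]\times 0$ really is an $L_\infty[1]$ ideal — both halves, "$I$ is a differential Lie ideal of $L$" and "the cocycle vanishes on $I$", are needed — together with the sign and suspension bookkeeping of the last paragraph, where one must track the degree shifts so that the target is $\Hom^*(H^*(\Omega^2_X),H^*(\Oh_X))$ with exactly the stated grading rather than a shift of it.
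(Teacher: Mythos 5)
Your proposal is correct and follows essentially the same route as the paper: use the Lemma to see that $I[1]\subset L[1]\times\Hom^*(H_X^{2,*},H_X^{0,*})[-1]$ is an $L_\infty[1]$ ideal, pass to the strict quasi-isomorphic quotient $M[1]\times\Hom^*(H_X^{2,*},H_X^{0,*})[-1]$ where only the quadratic cocycle survives, and then identify the result via d\'ecalage and the isomorphisms $M\simeq H^*(X,\Theta_X)$, $H_X^{2,*}[2]\simeq H^*(X,\Omega^2_X)$, $H_X^{0,*}\simeq H^*(X,\Oh_X)$; you merely spell out in more detail the two ingredients (differential Lie ideal plus $I$-basicity of the cocycle) that the paper compresses into one sentence. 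The only imprecision is the parenthetical ``all $q_k$ with $k\ge3$ vanish, since $n=2$'': the vanishing of the brackets $q_{n+k}$, $k\ge1$, is due to the restriction to $L$ (where the $\bl_\xi$ factors die on $\Omega$), not to $n=2$, but this is part of the model already displayed in the paper before the theorem and does not affect your argument.
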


\begin{proof} 
By the previous lemma the subspace $I[1]\subset L[1]\times\Hom(H_X^{n,\ast},H_X^{0,\ast})[-1]$ is an $L_{\infty}[1]$ ideal and then the projection 
\[ L[1]\times\Hom(H_X^{n,\ast},H_X^{0,\ast})[-1]\to \frac{L[1]}{I[1]}\times\Hom(H_X^{n,\ast},H_X^{0,\ast})[-1]\]
is a strict quasi-isomorphism. 
Since $\Hom(H_X^{n,\ast},H_X^{0,\ast})[-1]=\Hom(H_X^{n,\ast}[2],H_X^{0,\ast})[1]$ 
it is sufficient to rewrite the induced $L_{\infty}[1]$ structure on the quotient 
via the natural isomorphisms 
\[\frac{L[1]}{I[1]}\simeq H^*(X,\Theta_X)[1],\quad H^{n,*}_X[2]\simeq H^*(X,\Omega^2_X),
\quad H^{0,*}_X\simeq H^*(X,\Oh_X)\]
and then apply the d\'ecalage functor.
\end{proof}

\section*{Acknowledgments} Both authors 
acknowledge partial support by Italian MIUR under PRIN project 2012KNL88Y ``Spazi di moduli e teoria di Lie''.

\end{document}